\newtheorem{dfn}{Definition}[section]
\newtheorem{thm}[dfn]{Theorem}
\newtheorem{prop}[dfn]{Proposition}
\newtheorem{lem}[dfn]{Lemma}
\newtheorem{cor}[dfn]{Corollary}
\newtheorem{rem}[dfn]{Remark}
\newtheorem{car}[dfn]{Computer Assisted Result}
\newtheorem{ass}[dfn]{Assumption}
\newtheorem{alg}{Algorithm}
\newcommand{\Inv}{{\rm Inv}}
\newcommand{\Int}{{\rm int}}
\newcommand{\diag}{{\rm diag}}
\newcommand{\exit}{{\rm exit}}
\newcommand{\ent}{{\rm ent}}
\numberwithin{equation}{section}
\begin{document}

\title{Rigorous numerics of finite-time singularities in dynamical systems - methodology and applications}

\author{Kaname Matsue\thanks{Institute of Mathematics for Industry, Kyushu University, Fukuoka 819-0395, Japan {\tt kmatsue@imi.kyushu-u.ac.jp}} $^{,}$ \footnote{International Institute for Carbon-Neutral Energy Research (WPI-I$^2$CNER), Kyushu University, Fukuoka 819-0395, Japan}
}
\maketitle

\begin{abstract}
This paper aims at providing rigorous numerical computation procedure for finite-time singularities in dynamical systems.
Combination of time-scale desingularization as well as Lyapunov functions validation on stable manifolds of invariant sets for desingularized vector fields with standard integration procedure for ordinary differential equations give us validated trajectories of dynamical systems involving finite-time singularities.
Our focus includes finite-time extinction, traveling wave solutions with half-line or compact support, and singular canards in fast-slow systems, including rigorous validations of enclosures of extinction, finite-passage times or size of supports for compactons.
Such validated solutions lead to a plenty of composite wave solutions for degenerate parabolic equations, for example, with concrete information of profiles and evolutions.
The present procedure also provides a universal aspect of finite-time singularities with rigorous numerics, combining with rigorous numerics of blow-up solutions in preceding works. 
\end{abstract}

{\bf Keywords:} finite-time extinction, compactons, folded singularities, singular canards, desingularizations, Lyapunov functions on stable manifolds, rigorous numerics, chaotic structure in traveling waves.
\par
\bigskip
{\bf AMS subject classifications : } 34C37, 34E17, 35B40, 35K65, 65G20, 65L05

\section{Introduction}
Solutions of (time-evolutionary) differential equations have great natures depending on systems.
For understanding the structure of solutions, standard approaches begin with studying well-posedness of systems, for example; namely existence and uniqueness of solutions and their smooth dependence on initial conditions in the case of initial value problems.
The next step will be studies of {\em asymptotic} nature of systems like stability, or bifurcation of invariant objects like equilibria or periodic trajectories.
Understanding of asymptotics will lead to {\em global nature} of systems; namely solution structure of differential equations.
However, there are often cases that dynamic events which determine or drastically change asymptotic behavior, and which sometimes violate the (time-)global existence of solutions occur {\em in finite times}.
Examples for such events we suppose here are blow-ups (e.g., \cite{FM2002, K1992, Mat}), finite-time extinction (e.g., \cite{DV1991, LPB2004, MS2001}), compactons (e.g., \cite{RH1993}), quenching (e.g., \cite{K1992}) and canards (e.g., \cite{DGKKOW2012, KS2001, KW2010, SW2001}), which we shall call events involving {\em finite-time singularities} throughout this paper.
The fundamental questions involving finite-time singularities are, for a given system, {\em whether such events occur} and, if so, {\em when, where and how these events occur}\footnote{
In the case of blow-up solutions, for example, \lq\lq whether" question involves the existence, \lq\lq when" question involves the blow-up time, \lq\lq where" question involves the blow-up set and \lq\lq how" question involves the blow-up rate.
}.
There are plenty of researches for studying these features, many of which restrict applications of results to specific systems with slight generalizations, or study {\em assuming the existence of such events}. 
Needless to say, existence of these finite-time singularities for given systems remains non-trivial in general, and a comprehensive validation strategy is necessary to answer this fundamental question for applying preceding knowledge to general systems.
On the other hand, there are several studies discussing the relationship among finite-time singularities mentioning here (e.g., \cite{DV1991, MS2001}), and we believe that there is a universal mechanism for studying these finite-time singularities.
\par
From the viewpoint of numerical computations, detection of these singularities is a very tough question in general.
For example, the most difficulty for detecting blow-up solutions will be the treatment of the infinity, and that for finite-time extinction will be to answer which the extinction occurs in finite time or infinite time, which will be impossible to treat with direct numerical simulations in principle.
Nevertheless, numerical treatments play key roles in applications of our knowledge to studying solution structures in {\em various} and {\em concretely given} systems.
\par
\bigskip
Our main aim in this paper is to provide {\em rigorous numerical computation} methodology for validating solutions involving finite-time singularities as well as revealing a common mechanism for detecting these singularities.
In the present paper we pick up {\em finite-time extinction}, {\em compacton traveling waves} for degenerate diffusion systems and {\em canard solutions} for fast-slow systems\footnote{
As we mention later, rigorous numerics of blow-up solutions is already discussed in several preceding papers \cite{TMSTMO, MT2017}.
}.
{\em Rigorous numerics} we mention here is a series of numerical computation methodology based on interval arithmetic (or general affine arithmeric) on computer.
Interval arithmetic enables us to compute enclosures where mathematically correct objects are contained in the phase space.
These enclosures give {\em explicit} error bounds of {\em rigorous} solutions. 
In dynamical systems, there are many applications to validations of global trajectories, stability of invariant sets, parameter ranges containing parameter values where the dynamical bifurcations occur with computer assistance (e.g. \cite{CZ2015, CZ2016, Mat2, MHY2016, SZ2013, TMSTMO, W, ZM2001}).
We point our interest at concrete profile of trajectories including {\em rigorous arrival times} of trajectories at points inducing the degeneracy of dynamical systems, which relates to size of supports for compacton waves, too.
\par
As for rigorous numerics of trajectories of singular ODEs corresponding to compacton traveling waves, Szczelina and Zgliczy\'{n}ski \cite{SZ2013} discuss the treatment in terms of rigorous numerical validations of homoclinic orbits for associated {\em regular} dynamical systems.
Their approach is basically the same as our present one, namely derivation of desingularized system from the original system via the time-scale desingularization as well as the ODE integration with topological validation procedure of connecting orbits such as covering relations and cone conditions.
However, their argument does not refer to the concrete size of supports.
Our present approach tackles validating the explicit range of finite waves or concrete times when the dynamic event occur.
In order to obtain concrete profiles of traveling waves for singular systems, the time (or moving frame) variable desingularization of the form
\begin{equation}
\label{intro-time-desing}
\frac{dt}{d\tau} = T(x(t)) 
\end{equation}
along the solution $x(t)$ is required for desingularizing the system.
Once we obtain the (complete) solution trajectory $\{x(\tau)\}_{\tau \in \mathbb{R}}$ with given initial data, the relationship (\ref{intro-time-desing}) will recover the maximal existence range $(t_{\min}, t_{\max})$ of the solution in the original time scale.
However, the concrete calculation of $t_{\max} = \int_0^\infty T(x(\tau))d\tau$ is not easy in general, which is mainly due to the following reasons:
\begin{itemize}
\item We need compute integrals {\em on infinite domains}.
\item The integral needs information of {\em complete} trajectories.
\end{itemize}
Very recently, the author and his collaborators have tackled this difficulty in validating blow-up solutions \cite{TMSTMO, MT2017}.
They have described infinite-time integration of the form $\int_0^\infty T(u(\tau))d\tau$ by {\em Lyapunov functions} after appropriate desingularization of the form (\ref{intro-time-desing}) to the original system, which enable us to re-parameterize trajectories near stable equilibria by values of Lyapunov functions.
This parameterization realizes computation of rigorous upper and lower bounds of maximal existence times; namely blow-up times.
The validation methodology can be also applied to various systems with the same mechanism (finite-time singularity), including degenerated differential equations in the present considerations. 
On the other hand, only trajectories asymptotic to {\em stable} equilibria are discussed in the preceding works.
As for validations of finite-time extinctions or compacton traveling waves, the following difficulty has to be is also overcome\footnote{
We cannot directly apply the same approach as \cite{TMSTMO, MT2017} to our present problems, because Lyapunov functions of the validation forms can change the sign.
This property prevents us from constructing simple and concrete estimates of $t_{\max}$. 
Details are discussed in Section \ref{section-procedure}.
}, which is typical in the present problems:
\begin{itemize}
\item The integral needs information of complete trajectories asymptotic to {\em saddle-type} invariant sets.
\end{itemize}
We thus develop an alternative tool for validating $t_{\max}$, which is {\em Lyapunov functions on stable manifolds of saddle-type equilibria}.
Standard geometric treatment like {\em cone condition} (e.g., \cite{ZCov}) ensures the graph representation of (un)stable manifolds of saddle-type equilibria. 
Using this information, we can define a quadratic functions depending only on trajectories on stable manifolds of singularities, which enables us to estimate maximal existence times of solutions without any technical obstructions.
The finite-time singularity mechanism we mention here also appears in the other type of phenomenon, one of which is {\em canard solution} in fast-slow systems.
Canard solution is typically discussed in fast-slow systems which form
\begin{equation*}
x' = f(x,y,\epsilon),\quad y' = \epsilon g(x,y,\epsilon)
\end{equation*}
with a sufficiently small parameter $\epsilon > 0$ determining the multi-time scale.
It is induced by a solution trajectory which travels from the stable branch of critical manifolds given by $S_0 = \{f(x,y,0) = 0\}$ to the unstable branch across the fold point called {\em folded singularity}.
This singularity is {\em not} an equilibrium of the reduced vector field $\dot y = g(x,y,0)$ on $S_0$, which implies that the trajectory through the singularity passes the singularity with nonzero speed, but is characterized by an equilibrium for {\em the desingularized vector field} via time-scale transform of the form (\ref{intro-time-desing}).
In other words, singular canards can be regarded as trajectories including singularities with the same mechanism as finite-time extinction and compacton traveling waves.
Throughout discussions in this paper, we will see a common singular mechanism among these phenomena from the viewpoint of numerical validations.
\par
\bigskip
The rest of our paper is organized as follows.
In Section \ref{section-review}, we briefly review backgrounds related to our present concern; {\em finite traveling waves inducing finite-time extinction}, {\em compacton traveling waves} for degenerate parabolic equations and trajectories on critical manifolds through {\em folded singularities} for fast-slow systems.
There we also prove several properties for constructing various type of solutions under assumptions looking quite strong.
Rigorous numerics of trajectories, nevertheless, lets such assumptions meaningful in practical applications.
In Section \ref{section-procedure}, we provide a rigorous computation methodology of trajectories involving finite-time singularities.
The key theorem, validation of (locally defined) Lyapunov functions on stable manifolds of equilibria, is shown there.
Once we obtain a suitable treatment of finite-time singularities, numerical validations are realized by standard methodologies discussed in many preceding works (e.g., \cite{MT2017, SZ2013, TMSTMO, W}).
We also mention a local {\em orbital equivalence} between the original and desingularized systems there.
In particular, we discuss if validated trajectories have the same information between the original and desingularized systems, which can be discussed in terms of orbital equivalence in dynamical systems.
The key point is the property that $T(x)$ in (\ref{intro-time-desing}) has an identical sign along trajectories, which is non-trivial in general\footnote{
Fortunately, this question always has an positive answer in preceding studies \cite{Mat, MT2017, TMSTMO} involving blow-up solutions.
}.
We prove that, in a simple case, the orbital equivalence can be reduced to conditions involving eigenvectors and cones around equilibria which play key roles in describing asymptotic behavior.
In Section \ref{section-example}, validation examples of finite-time singularities are shown.
The first example is concerned with traveling waves for a degenerate parabolic equation.
We validate finite traveling wave inducing finite time extinction, compacton traveling waves and composite waves superposing finite traveling waves describing expansion of dead cores.
Furthermore, we show that the existence of different compacton traveling waves with an identical speed induce a {\em chaotic} structure in a family of weak solutions of degenerate equations.
The second example is concerned with folded singularities in fast-slow systems.
Trajectories for reduced systems called {\em singular canards} pass singularities, which are equilibria {\em only for desingularized systems}, with nonzero speed.
Since there are well-known and fundamental studies that such singular canards induce very complex and rich solution structures like {\em maximal canards, canard explosion and mixed-mode oscillations} in fast-slow systems (e.g., \cite{DGKKOW2012, KS2001, KW2010, SW2001}), our present methodology will also open the door for comprehensive studies of such complex and rich solution structures in fast-slow systems from both mathematical and numerical viewpoints.

\section{Our present focus : brief review of backgrounds}
\label{section-review}
We list several typical finite-time singularities treated in this study.
Blow-up phenomena of differential equations are also included in our focus, but we do not discuss details of blow-ups because they are already discussed in previous works \cite{Mat, MT2017, TMSTMO}.

\subsection{Finite traveling waves in degenerate diffusion equation}
One of our concerns here is the following quasilinear parabolic equation (e.g., \cite{LPB2004}):
\begin{align}
\label{deg-para}
u_t &= d(u^{m+1})_{xx} + f(u),\quad (t,x)\in Q := (0,\infty)\times \mathbb{R},\\
\label{deg-para-bc}
u(0,x) &= u_0(x),\quad x\in \mathbb{R},\ 0\leq u_0 \leq 1,
\end{align}
where $m > 0$, $d = (m+1)^{-1}, u_0\in C^0(\mathbb{R})$.
Here assume that a nonlinearity $f$ satisfies
\begin{description}
\item[(Scalar1)] $f\in C^0[0,1]\cap C^1(0,1]$, $f(0)=f(1)=0$, and $f'(1) < 0$.
Moreover, there exists $\alpha \in (0,1)$ such that $f<0$ on $(0,\alpha)$ and $f>0$ on $(\alpha, 1)$.
\item[(Scalar2)] There exist $p\in (0,1)$ and $\beta > 0$ such that $N:=m+p\geq 1$ and $u^{-p}f(u)\to -\beta$ as $u\to +0$.
\item[(Scalar3)] $\int_0^1 v^m f(v)dv < 0$. 
\end{description}

Such kind of diffusion equations arises in many applications, including population genetics, signal propagation in nerve axons, and combustion theory (\cite{LPB2004}).
One of our main targets in this paper is a {\em finite traveling wave}.
This wave is not always smooth, which is due to degeneracy of the system.
We thus treat solutions of the system in a weak sense as follows.
\begin{dfn}[e.g., \cite{LPB2004}]\rm
\label{dfn-weak}
A nonnegative function $u$ is said to be a {\em weak solution of (\ref{deg-para})-(\ref{deg-para-bc})} provided for each $r,T>0$, letting $Q_T = (0,T)\times \mathbb{R}$, $u\in C(Q_T)\cap L^\infty(Q_T)$ holds and
\begin{align}
\notag
\int_{Q_T} \left\{ u\phi_t + du^{m+1}\phi_{xx} + f(u)\phi \right\} dxdt &= \int_{-r}^r \left\{ u(T,x) \phi(T,x) - u_0(x) \phi(0,x) \right\} dx \\
\label{weak-form-diff}
	&+ d\int_0^T \left\{ u^{m+1}(t,r) \phi_x(t,r) - u^{m+1}(t,-r) \phi_x(t,-r) \right\}dt
\end{align}
also holds for all $\phi\in C^{2,1}(\overline{Q_T})$\footnote{
The function space $C^{2,1}(\overline{Q_T})$ consists of functions $u$ such that $u$ is twice continuously differentiable in $x$ and that $u$ is continuously differentiable in $t$ at any points on $\overline{Q_T}$.
}
such that $\phi \geq 0$ and that $\phi(t, \pm r) = 0$ for all $t\in [0,T]$.
\end{dfn}

The following definition of traveling waves makes sense even if waves lose regularity at a point $(t,x)\in Q$. 
\begin{dfn}\rm
A {\em finite traveling wave solution} of (\ref{deg-para})-(\ref{deg-para-bc}) is a weak solution of the form $u(t,x) = \varphi(x-ct)$ with a velocity $c\in \mathbb{R}$ satisfying $\varphi(z) \equiv 0$ for $z\geq w$ (or $z\leq w$) for some $w\in \mathbb{R}$. 
\end{dfn}
Preceding works for degenerate diffusion equation (\ref{deg-para})-(\ref{deg-para-bc}) refer to such finite traveling waves and their reflections as upper solutions.
As an application, the finite time extinction of compactly supported solution of (\ref{deg-para})-(\ref{deg-para-bc}) is considered (e.g., \cite{LPB2004}).

\par
\bigskip
Now move to the traveling wave problem associated with (\ref{deg-para})-(\ref{deg-para-bc}).
Let us write (\ref{deg-para})-(\ref{deg-para-bc}) in the divergence form
\begin{equation*}
u_t = (u^m u_x)_x + f(u),\quad (t,x)\in Q.
\end{equation*}
Setting $z = x-ct$, $u$ is formally a traveling wave solution of (\ref{deg-para}) with velocity $c$ if and only if $u$ satisfies the quasilinear ODE
\begin{equation*}
-cu' = (u^m u')' + f(u),\quad z\in \mathbb{R}
\end{equation*}
with stationary boundary conditions $\lim_{z\to \pm \infty}u(z) = u_\pm\in \mathbb{R}$, respectively, where $' = \frac{d}{dz}$.
Now we {\em desingularize} the above ODE by introducing the rescaling of the \lq time' variable $z$ according to
\begin{equation*}
\frac{ds}{dz} = \frac{1}{u^m(z)}.
\end{equation*}
Setting $U(s) = u(z)$, then $u(z)$ is a (weak) traveling wave solution of (\ref{deg-para}) if and only if $U(s)$ is a ({\em classical}) traveling wave solution of 
\begin{equation}
\label{tw-desing-scalar-1}
U_s = U_{xx} + U^m f(U),\quad (t,x)\in Q,
\end{equation}
or equivalently
\begin{equation}
\label{tw-desing-scalar-2}
-c\dot U = \ddot U + U^m f(U),\quad s\in \mathbb{R}
\end{equation}
with appropriate boundary conditions, where $\dot{} = \frac{d}{ds}$.
The ODE (\ref{tw-desing-scalar-2}) is called the {\em desingularized ODE}. 
Note that, by (Scalar1) and (Scalar2), we have $U^mf(U)\in C^1[0,1]$.

\par
\bigskip

Traveling wave solutions of (\ref{tw-desing-scalar-1}) are referred to as {\em connecting orbits} of (\ref{tw-desing-scalar-2}) connecting equilibria, which reduces the problem to dynamical systems generated by ordinary differential equations with appropriate boundary conditions.
Rewrite (\ref{tw-desing-scalar-2}) as the first-order system
\begin{equation}
\label{tw-scalar-1st}
\begin{cases}
\dot U = V, &\\
\dot V = -cV - U^mf(U). 
\end{cases}
\end{equation}
The system (\ref{tw-scalar-1st}) possesses equilibria at $(0,0)$, $(1,0)$ and possibly at more points.
Connecting orbits for (\ref{tw-scalar-1st}) can be validated by standard methodology based on rigorous numerics, which is stated in Section \ref{section-procedure}.
Our validation opens the door for constructing very rich structures of solution sets.
For example, we easily obtain the superposition of solutions with special properties with computer assistance, which is stated below.

\begin{prop}
\label{prop-superpose}
Consider (\ref{deg-para})-(\ref{deg-para-bc}) with polynomial nonlinearity $f$ such that $f(0) = 0$.
Let $u=u(t,x)$ and $v=v(t,x)$ be weak solutions of (\ref{deg-para})-(\ref{deg-para-bc}) with the initial data $u_0$ and $v_0$, respectively.
Assume that, for any $T>0$, ${\rm supp}u\cap {\rm supp}v = \emptyset$ in $Q_T$.
Then $u+v$ is also a weak solution of (\ref{deg-para})-(\ref{deg-para-bc}) with the initial data $u_0 + v_0$.
\end{prop}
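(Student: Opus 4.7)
The plan is to take the two weak-form identities that $u$ and $v$ satisfy separately and add them, checking that every term on both sides linearizes correctly under the disjoint-support hypothesis. So I would fix arbitrary $r, T > 0$ and a nonnegative test function $\phi \in C^{2,1}(\overline{Q_T})$ with $\phi(t,\pm r) = 0$, write down (\ref{weak-form-diff}) once for $u$ with initial data $u_0$ and once for $v$ with initial data $v_0$, and then add.

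The linear terms (those involving $\phi_t$, $\phi_{xx}$ outside nonlinear expressions, the initial data, and the boundary terms in $r$) add by linearity without comment, giving the correct right-hand side for $u+v$ with initial data $u_0 + v_0$. The nonlinear terms are the two I need to massage: $u^{m+1}\phi_{xx} + v^{m+1}\phi_{xx}$ inside the bulk integral, $f(u)\phi + f(v)\phi$ inside the bulk integral, and the same $u^{m+1}$, $v^{m+1}$ contributions in the lateral boundary integral. Here I would use pointwise disjointness of supports: for each $(t,x) \in Q_T$ one of $u(t,x), v(t,x)$ vanishes, so $u(t,x) v(t,x) \equiv 0$. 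This immediately yields $(u+v)^{m+1} = u^{m+1} + v^{m+1}$. For the reaction term, writing the polynomial $f$ as $f(w) = \sum_{k\ge 1} a_k w^k$ (using $f(0)=0$) and expanding $(u+v)^k$ by the binomial theorem, every cross term contains a positive power of both $u$ and $v$ and therefore vanishes pointwise, giving $f(u+v) = f(u) + f(v)$.

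With these two pointwise identities, the sum of the weak-form identities for $u$ and $v$ rearranges exactly into (\ref{weak-form-diff}) for $w := u+v$ against $\phi$. I would then verify the admissibility conditions for $w$ being a weak solution: nonnegativity and $w \in C(Q_T) \cap L^\infty(Q_T)$ are immediate from the corresponding properties of $u$ and $v$, and the initial-data bound $0 \le u_0 + v_0 \le 1$ required by (\ref{deg-para-bc}) follows because disjointness of supports means the sum equals $u_0$ or $v_0$ pointwise, each of which already lies in $[0,1]$.

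The only delicate point — and what I expect to be the main source of hidden hypothesis-checking — is the transition from \emph{disjoint supports} as a statement about closures to pointwise vanishing of the product. Since support is defined as the closure of $\{u\neq 0\}$, disjointness of supports implies that at every point at most one of $u,v$ is nonzero, which is exactly what the binomial argument needs; I would spell this out explicitly to justify the polynomial identities almost everywhere on $Q_T$, which suffices for the integral identity. The polynomiality of $f$ is essential here: for a general $f$ with $f(0)=0$, the identity $f(u+v) = f(u) + f(v)$ need not hold pointwise even when $uv \equiv 0$, so the hypothesis cannot be relaxed without a separate argument.
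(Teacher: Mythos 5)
Your proof is essentially the same as the paper's: add the two weak-form identities for $u$ and $v$, use pointwise disjointness of supports to linearize the nonlinear terms $(u+v)^{m+1}$ and $f(u+v)$, and reassemble into the weak form for $u+v$ with initial data $u_0+v_0$. The extra care you take with the support-closure-versus-pointwise step and with the admissibility conditions ($0\le u_0+v_0\le 1$, continuity, nonnegativity, $L^\infty$) is welcome; the paper leaves these implicit.

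However, your closing remark is incorrect: polynomiality of $f$ is \emph{not} essential, and the binomial-expansion detour is not needed. If $u(t,x)v(t,x)=0$, then (since these are real numbers) at that point at least one of $u(t,x)$, $v(t,x)$ is zero; say $u(t,x)=0$. Then for \emph{any} function $f$ with $f(0)=0$,
\begin{equation*}
f\bigl((u+v)(t,x)\bigr)=f\bigl(v(t,x)\bigr)=f\bigl(u(t,x)\bigr)+f\bigl(v(t,x)\bigr),
\end{equation*}
and symmetrically if $v(t,x)=0$. So $f(u+v)=f(u)+f(v)$ holds pointwise with no structural assumption beyond $f(0)=0$. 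The same direct pointwise substitution also gives $(u+v)^{m+1}=u^{m+1}+v^{m+1}$ even when $m+1$ is not an integer (as in the paper's example $m=3/4$), which a binomial expansion would not cover. In short, the proposition's polynomiality hypothesis is a convenience inherited from the setting, not a logical necessity for this particular argument.
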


\begin{proof}
First note that the value of the superposition $(u+v)(t,x)$ is defined by the sum $u(t,x) + v(t,x)$, by the definition of ring structure on $C(Q_T)$.
In particular, we have $\int_{-r}^r (u+v)(T,x)\phi(T,x)dx = \int_{-r}^r u(T,x)\phi(T,x)dx + \int_{-r}^r v(T,x)\phi(T,x)dx$, and so on.
\par
Next consider the integral $\int_{Q_T} u^m \phi dxdt$.
If ${\rm supp}u\cap {\rm supp}v = \emptyset$ in $Q_T$, then $(uv)(t,x)\equiv u(t,x)v(t,x) = 0$ holds for any $(t,x)\in Q_T$, since $(t,x)$ is away from at least one support of solutions $u,v$.
We thus obtain the following property:
\begin{equation*}
\int_{Q_T} (u+v)^m \phi dxdt = \int_{Q_T} u^m \phi dxdt + \int_{Q_T} v^m \phi dxdt.
\end{equation*}
Similarly we have, for any $x\in \mathbb{R}$, 
$\int_0^T (u+v)^{m+1}(t,x) \phi_x(t,x)dt = \int_0^T \{ (u^{m+1}(t,x)+v^{m+1}(t,x)\} \phi_x(t,x)dt$. 
With the assumption about $f$, we know $\int_{Q_T} f(u+v) \phi dxdt = \int_{Q_T} \{f(u)+f(v)\} \phi dxdt$ for any $\phi$ under consideration.
Finally we have
\begin{align*}
\notag
\int_{Q_T} &\left[ (u+v)\phi_t + d(u+v)^{m+1}\phi_{xx} + f(u+v)\phi \right] dxdt \\
	& = \int_{Q_T} \left[ (u+v)\phi_t + d(u^{m+1}+v^{m+1})\phi_{xx} + \{f(u)+f(v)\}\phi \right] dxdt \\
	&= \int_{-r}^r \left\{ u(T,x) \phi(T,x) - u_0(x) \phi(0,x) \right\} dx + d\int_0^T \left\{ u^{m+1}(t,r) \phi_x(t,r) - u^{m+1}(t,-r) \phi_x(t,-r) \right\}dt\\
	&+ \int_{-r}^r \left\{ v(T,x) \phi(T,x) - v_0(x) \phi(0,x) \right\} dx + d\int_0^T \left\{ v^{m+1}(t,r) \phi_x(t,r) - v^{m+1}(t,-r) \phi_x(t,-r) \right\}dt\\
	&= \int_{-r}^r \left\{ (u(T,x)+v(T,x)) \phi(T,x) - (u_0(x)+v_0(x)) \phi(0,x) \right\} dx\\
	&\quad + d\int_0^T \left\{ (u^{m+1}(t,r)+v^{m+1}(t,r)) \phi_x(t,r) - (u^{m+1}(t,-r)+v^{m+1}(t,-r)) \phi_x(t,-r) \right\}dt\\
	&= \int_{-r}^r \left\{ (u+v)(T,x) \phi(T,x) - (u_0+v_0)(x) \phi(0,x) \right\} dx\\
	&\quad + d\int_0^T \left\{ (u+v)^{m+1}(t,r) \phi_x(t,r) - (u+v)^{m+1}(t,-r) \phi_x(t,-r) \right\}dt,
\end{align*}
which completes the proof.
\end{proof}

The most important point of this proposition is that solutions with disjoint supports can be superposed in the sense of weak solutions. 
Ordinary traveling wave solutions in regular systems can never realize such properties.
{\em Finite} traveling waves, on the other hand, easily attain this property, which indicates that finite traveling waves induce very rich solution structures.

\subsection{Trajectories passing folded singularities in fast-slow systems}
\label{section-folded}
The second topic in the present paper involves {\em folded singularities}, which brings about singular Hopf bifurcation or canard bifurcation in suitable situations (e.g. \cite{DGKKOW2012, KW2010}).
Consider the following fast-slow system:
\begin{equation*}
\begin{cases}
dw / d\tilde t = f(w,y,\epsilon), & \\
dy / d\tilde t = \epsilon g(w,y, \epsilon), & 
\end{cases}
\end{equation*}
equivalently
\begin{equation}
\label{fast-slow-param}
\begin{cases}
\epsilon w' = f(w,y,\epsilon), & \\
y' = g(w,y, \epsilon), & 
\end{cases}\quad t = \epsilon \tilde t,
\end{equation}
where $w\in \mathbb{R}$ and $y\in \mathbb{R}^l$, $l\geq 2$.
For simplicity, we assume that $l=2$ and we recast (\ref{fast-slow-param}) as
\begin{equation}
\label{fast-slow-FSN}
\begin{cases}
\epsilon w' = f(w,y,z,\epsilon), & \\
y' = g_1(w,y,z,\epsilon), &\\
z' = g_2(w,y,z,\epsilon), & 
\end{cases}
\end{equation}
where $y\in \mathbb{R}$ and $z\in \mathbb{R}$.
The variable $w\in \mathbb{R}$ can be generalized to the case $\mathbb{R}^n$ with $n> 1$, but just one component in fast variable is the essential to the present phenomenon, and hence we only discuss the case $n=1$.
General situation is discussed in \cite{KW2010}.

\begin{rem}\rm
The assumption $l\geq 2$ is crucial when we consider canard-type bifurcations.
In the case $l=1$, typically known canard phenomenon (for planar systems) is {\em canard explosion}, which induces the sudden bifurcation between small limit cycles and relaxation oscillations in an {\em exponentially small} parameter range.
However, as soon as there is more than one slow variables, canard-type bifurcation can exist for $O(1)$ ranges of a parameter.
See e.g., \cite{KS2001_RO, KW2010} for details.
\end{rem}

Assume that the critical manifold $S = \{f=0\}$ of (\ref{fast-slow-FSN}) has an attracting sheet $S^a$ and a repelling sheet $S^r$\footnote{
The {\em attracting} sheet $S^a \subset S$ is the subset of points on $S$ where the Jacobian matrix $f_w(w,y,z,0)$ is negative, whereas the {\em repelling} sheet $S^r \subset S$ is the subset of points on $S$ where $f_w(w,y,z,0)$ is positive.
} that meet a fold curve $F$, which is generically a one-parameter family of saddle-node bifurcation points; namely the set of $p_\ast \in F$ being generic in the following sense: 
\begin{equation}
\label{FS-generic}
f(p_\ast, 0) = 0,\quad \frac{\partial f}{\partial w}(p_\ast,0) = 0,\quad \frac{\partial^2 f}{\partial w^2}(p_\ast,0) \not = 0,\quad D_{(y,z)}f(p_\ast,0)\text{ has full rank one}.
\end{equation}
Note that the slow flow (\lq\lq reduced" flow on $S$) is not defined on the fold curve before desingularization.
At most fold points, trajectories approach or depart from both $S^a$ and $S^r$.
Generically there may be isolated points called {\em folded singularities} defined below, where the trajectories of the slow flow switch from incoming to outgoing.
Folded singularities are characterized as equilibria of the desingularized slow flow.
\par
In what follows we consider the \lq\lq desingularized" flow at $\epsilon =0$, which makes sense in the whole $S$ including $F$.
To this end, differentiate $f(w,y,z,0) = 0$ on $S$ with respect to $t$:
\begin{equation*}
f_w w' + f_y y' + f_z z' = 0,
\end{equation*}
which is equivalent to
\begin{equation*}
f_w w' = -(f_y g_1 + f_z g_2).
\end{equation*}
Via the time-scale transformation $\tau = -f_w(w,y,z) t$, we obtain the desingularized flow
\begin{equation}
\label{FSN-reduced}
\begin{cases}
w' = f_y g_1 + f_z g_2, & \\
y' = - f_w g_1(w,y,z,0), &\\
z' = - f_w g_2(w,y,z,0), & 
\end{cases}
\end{equation}
which makes sense including $F$, but only on $S$. 

\begin{dfn}\rm
A fold point $p_\ast \in F$ is a {\em folded singularity} if it is an equilibrium of (\ref{FSN-reduced}); namely the solution of 
\begin{equation*}
g_1(p_\ast, 0)\frac{\partial f}{\partial y}(p_\ast, 0) + g_2(p_\ast, 0)\frac{\partial f}{\partial z}(p_\ast, 0)= 0.
\end{equation*}
\end{dfn}

In the assumption ${\rm rank}D_{(y,z)}f(p_\ast,0)=1$, under arrangements of coordinates, we may assume that $D_{(y,z)}f(p_\ast,0)=(1,0)$, in which case, the critical manifold $S$ can be represented by $(x,y) = h(w,z)= (h^x(w,z), h^y(w,z))$ for some smooth function $h$ in a small neighborhood of $p_\ast$.
Therefore, (\ref{FSN-reduced}) on $S$ is further reduced to the vector field with respect to $(w,z)$.
Direct calculations yield
\begin{align*}
y' &= h^y_w w' + h^y_z z' = h^y_w w' + h^y_z g_2(w,h(w,z),z,0) = g_1(w,h(w,z),z,0),\\
z' &= g_2(w,h(w,z),z,0).
\end{align*}
Thus we have
\begin{align*}
h^y_w w' &= g_1(w,h(w,z),z,0) - h^y_z g_2(w,h(w,z),z,0),\\
z' &= g_2(w,h(w,z),z,0).
\end{align*}
Using the time-rescaling $d\tau /dt = h^y_w$, we finally obtain
\begin{equation}
\label{reduced-final}
\begin{cases}
\dot w = g_1(w,h(w,z),z,0) - h^y_z g_2(w,h(w,z),z,0), & \\
\dot z = h^y_w g_2(w,h(w,z),z,0), &
\end{cases}\quad \dot {} = \frac{d}{d\tau}.
\end{equation}
Let $J(w,z)$ be the Jacobian matrix of the right-hand side of (\ref{reduced-final}) at $(w,z)$.

\begin{dfn}\rm
Let $\sigma_1, \sigma_2$ be eigenvalues of $J(w_\ast, z_\ast)$, where $(w_\ast, h(w_\ast, z_\ast), z_\ast) = p_\ast$ is a folded singularity.
Then $p_\ast$ is called
\begin{equation*}
\begin{cases}
\text{\em folded saddle} & \text{ if }\sigma_1 \sigma_2 < 0,\quad \sigma_i \in \mathbb{R},\\
\text{\em folded node} & \text{ if }\sigma_1 \sigma_2 > 0,\quad \sigma_i \in \mathbb{R},\\
\text{\em folded focus} & \text{ if }\sigma_1 \sigma_2 > 0,\quad {\rm Im}(\sigma_i)\not = 0.
\end{cases}
\end{equation*}
\end{dfn}

\begin{rem}
The folded singularity $p_\ast$ is an equilibrium of (\ref{FSN-reduced}), while it is {\em not always} an equilibrium of the original system (\ref{fast-slow-FSN}) with $\epsilon = 0$.
Moreover, the function $h^y_w$ may change the sign, which means that the actual flow on the reduced system evolves in the opposite direction across the zero of $h^y_w$.
In particular, the trajectory on the stable manifold of $p_{\ast}$ for (\ref{FSN-reduced}) will arrive  the folded singularity $p_\ast$ is typically a zero of $h^y_w$.
\end{rem}

In many cases, folded saddles and nodes are focused in bifurcation theory. 
These singularities allow the reduced flow (\ref{reduced-final}) to cross from $S^a$ to $S^r$.
Such trajectories are called {\em singular canards}.
More precisely, the following concepts are generally given and are well studied. 

\begin{dfn}[Singular canards, e.g., \cite{MS2001, KW2010}]\rm 
Solutions of the reduced problem, namely (\ref{fast-slow-FSN}) with $\epsilon = 0$, passing through a folded singularity from an attracting critical manifold to a repelling critical manifold are called {\em singular canards}.
Solutions of the reduced problem passing through a folded singularity from a repelling critical manifold to an attracting critical manifold are called {\em singular faux canards}.
\end{dfn}

Persistence of singular (especially, {\em not faux}) canards under small perturbation $\epsilon \ll 1$ in (\ref{fast-slow-FSN}) is of great interest since it gives rise to complex dynamics (e.g., \cite{DGKKOW2012, MS2001}).
On the other hand, a folded focus is of little interest, since there are no (maximal) canards generated by a folded focus.

\begin{figure}[htbp]
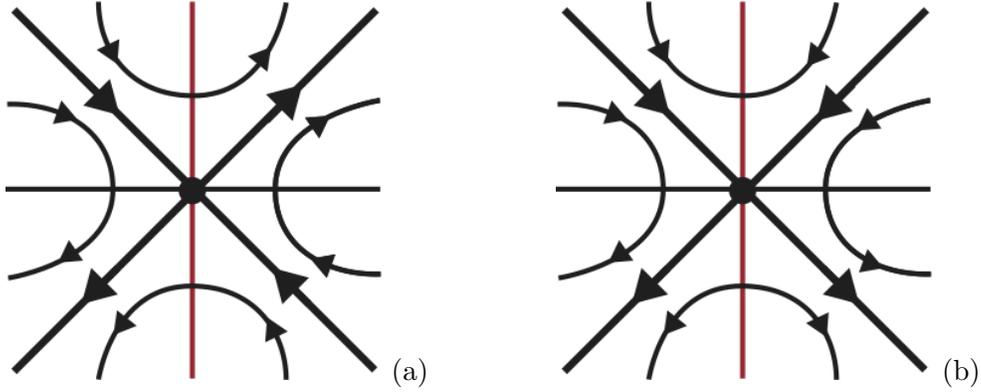

  \begin{center}
\begin{minipage}{0.48\hsize}
\centering
    \includegraphics[width=5cm]{figs/fig_folded_saddle.png}
(a)
\end{minipage}
\begin{minipage}{0.48\hsize}
\centering
\includegraphics[width=5cm]{figs/fig_folded_saddle_original.png}
(b)
\end{minipage}
    \caption{Folded saddle and singular (faux) canards (e.g., \cite{SW2001})}
    \label{fig_FS}
\end{center}
Desingularized and the original reduced flows on the critical manifold $S_0$ are drawn.
The red line represents a (generic) folded line $F$.
Sign of $T(x)$ determining (\ref{time-desing}) below is assumed to be identical in each component of $(U\setminus F)\cap S_0$, where $U$ is a neighborhood of a folded singularity (black point) in $S_0$.
(a) : desingularized flow around a folded saddle. 
The folded saddle is an equilibrium of the desingularized flow.
(b) : the corresponding {\em original} vector field. 
The center point is {\em not} an equilibrium.  
Typically $T(x)$ is negative in a component of $(U\setminus F)\cap S_0$ (the right-hand side in the present figure) where the flow direction is reversed. 
The bold lines correspond to special solutions on $S_0$ containing $F$, either of which is a singular canard and the other is a singular faux canard depending on the stability of branches of $S_0$ in the full singular limit system (\ref{fast-slow-param})$_{\epsilon=0}$.
\end{figure}

\section{Rigorous numerics of finite-time singularities}
\label{section-procedure}
Consider the vector field
\begin{equation}
\label{ODE}
x' \equiv \frac{dx}{dt} = f(x, \mu),\quad f:\mathbb{R}^n\times \mathbb{R}^k\to \mathbb{R}^n,
\end{equation}
where $\mu$ is a parameter.
Here we do {\em not} assume the continuity and smoothness of $f$ with respect to $x$ provided no special mentions are present.
In this section, we firstly review topological tools for validating global trajectories in regular dynamical systems. 
Secondly we provide a rigorous numerical computation procedure of maximal existence times of trajectories in dynamical systems with degeneracy, which are often referred to as {\em extinction time}, {\em arrival time} or {\em passing time}.

\subsection{Isolating blocks}
\label{section-block-brief}
Firstly we review topological tools for {\em regular} dynamical systems.
The first one is an {\em isolating block}, which validates invariant sets with rigorous information of vector fields on the boundary.
Here we assume that $f$ is smooth.
\begin{dfn}[Isolating block]\rm
\label{dfn-isolation}
Let $N\subset \mathbb{R}^D$ be a compact set. 
We say $N$ an {\em isolating neighborhood} if $\Inv(N)\subset \Int (N)$ holds, where 
\begin{equation*}
\Inv(N):= \{x\in N \mid \varphi(\mathbb{R},x)\subset N\}
\end{equation*}
for a flow $\varphi: \mathbb{R}\times \mathbb{R}^D\to \mathbb{R}^D$ on $\mathbb{R}^D$.
Next let $x\in \partial N$. We say $x$ an {\em exit} ({\em resp. entrance}) point of $N$, if for every solution $\sigma:[-\delta_1,\delta_2]\to \mathbb{R}^N$ through $x= \sigma(0)$, with $\delta_1\geq 0$ and $\delta_2 > 0$ there are $0\leq \epsilon_1 \leq \delta_1$ and $0 < \epsilon_2 \leq \delta_2$ such that for $0 < t \leq \epsilon_2$,
\begin{equation*}
\sigma(t)\not \in N\ (\text{resp. } \sigma(t)\in \Int(N)),
\end{equation*}
and for $-\epsilon_1 \leq t < 0$,
\begin{equation*}
\sigma(t)\not \in \partial N\ (\text{resp. } \sigma(t)\not \in N)
\end{equation*}
hold. $N^{\exit}$ (resp. $N^{\ent}$) denote the set of all exit (resp. entrance) points of the closed set $N$. We call $N^{\exit}$ and $N^{\ent}$ {\em the exit} and {\em the entrance} of $N$, respectively.
Finally $N$ is called {\em an isolating block} if $\partial N = N^{\exit}\cup N^{\ent}$ holds and $N^{\exit}$ is closed in $\partial N$.
\end{dfn}
There are several algorithms for constructing isolating blocks.
Here we apply the procedure provided first by Zgliczy\'{n}ski and Mischaikow \cite{ZM2001, Mat2} to our problems, which effectively constructs blocks including equilibria.
In \cite{Mat2}, construction of isolating blocks based on predictor-corrector method with respect to parameters for sample (numerical) equilibria is also shown, which is briefly collected in Appendix \ref{section-block}.
Note that these blocks can be directly applied to the successive validations related to (un)stable manifolds, which are discussed in Section \ref{section-stable-manifold}.

\subsection{Covering relations and connecting orbits}

Next we move to the review of covering relations.
We assume that $f$ is smooth as in the previous subsection.
A central target is an {\em $h$-set} defined as follows.
\begin{dfn}[$h$-set, cf. \cite{ZCov, ZG}]\rm
\label{dfn-hset}
An {\em $h$-set} consists of the following set, integers and a map:
\begin{itemize}
\item A compact subset $N\subset \mathbb{R}^D$.
\item Nonnegative integers $u(N)$ and $s(N)$ such that $u(N) + s(N) = n$ with $n\leq D$.
\item A homeomorphism $c_N:\mathbb{R}^n\to \mathbb{R}^{u(N)}\times \mathbb{R}^{s(N)}$ satisfying
\begin{equation*}
c_N(N) = \overline{{\bf B}_{u(N)}}\times \overline{{\bf B}_{s(N)}},
\end{equation*}
where $\overline{{\bf B}_d} = \overline{{\bf B}_d(0,1)}$ denotes the $d$-dimensional open unit ball centered at the origin.
\end{itemize}
We shall say the coordinate given by the image of $c_N$ {\em the diagonal coordinate}.
Finally define the {\em dimension} of an $h$-set $N$ by $\dim N:= n$.
\end{dfn}
We shall write an $h$-set $(N,u(N),s(N),c_N)$ simply by $N$ if no confusion arises. Let
\begin{align*}
N_c &:=  \overline{{\bf B}_{u(N)}} \times  \overline{{\bf B}_{s(N)}},\quad
N_c^- := \partial \overline{{\bf B}_{u(N)}} \times  \overline{{\bf B}_{s(N)}},\quad
N_c^+ := \overline{{\bf B}_{u(N)}} \times  \partial \overline{{\bf B}_{s(N)}},\\
N^- &:= c_N^{-1}(N_c^-), \quad N^+:= c_N^{-1}(N_c^+)
\end{align*}

The following notion describes the topological transversality between two $h$-sets relative to continuous maps.

\begin{dfn}[Covering relations, cf. \cite{ZCov, ZG}]\rm
\label{dfn-covrel}
Let $N, M\subset \mathbb{R}^m$ be $h$-sets with $u(N)+s(N), u(M)+s(M)\leq m$ and $u(N)=u(M) = u$. $f: N \to \mathbb{R}^{\dim M}$ denotes a continuous mapping and $f_c:= c_M\circ f\circ c_N^{-1}: N_c\to \mathbb{R}^{u} \times \mathbb{R}^{s(M)}$. We say $N$ {\em $f$-covers} $M$ ($N\overset{f}{\Longrightarrow}M$) if the following statements hold: 
\begin{enumerate}
\item There exists a continuous homotopy $h:[0,1]\times N_c\to \mathbb{R}^{u}\times \mathbb{R}^{s(M)}$ satisfying
\begin{align*}
&h_0 = f_c,\quad 
h([0,1],N_c^-)\cap M_c = \emptyset,\quad
h([0,1],N_c)\cap M_c^+ = \emptyset,
\end{align*}
where $h_\lambda = h(\lambda, \cdot)$ ($\lambda \in [0,1]$).
\item There exists a continuous mapping $A:\mathbb{R}^{u}\to \mathbb{R}^{u}$ such that
\begin{equation}
\label{cov-degree}
\begin{cases}
h_1(p,q) = (A(p),0), &\\
A(\partial {\bf B}_u (0,1)) \subset \mathbb{R}^u \setminus \overline{{\bf B}_u}(0,1), & \\
\deg(A, \overline{{\bf B}_u}, 0)\not = 0
\end{cases}
\end{equation}
holds for $p\in \overline{{\bf B}_u}(0,1), q\in \overline{{\bf B}_s}(0,1)$.
\end{enumerate}
\end{dfn}

\begin{rem}\rm
In definition of covering relation between $N$ and $M$, the disagreement of $\dim N$ and $\dim M$ is not essential. On the contrary, the equality $u(N) = u(M) = u$ is essential because the mapping degree of $u$-dimensional mapping $A$ should be derived.
\end{rem}



A fundamental result in the theory of covering relations is the following proposition.

\begin{prop}[Theorem 4 in \cite{ZG}]
\label{ZG-periodic}
Let $N_i, i=0,1,\cdots, k$ be $h$-sets such that $u(N_i)= u$ for $i=0,1,\cdots, k$ and let $f_i: N_i \to \mathbb{R}^{\dim (N_{i+1})}$, $i=0,1,\cdots, k-1$, be continuous. 
Assume that, for all $i=0,1,\cdots, k-1$, the covering relation $N_i\overset{f_i}{\Longrightarrow}N_{i+1}$ holds.
Then there is a point $p\in \Int N_0$ such that
\begin{equation*}
f_i \circ f_{i-1}\circ \cdots f_0(p)\in \Int N_i\quad \text{ for all }i=0,\cdots, k-1.
\end{equation*}
\end{prop}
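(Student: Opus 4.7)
The plan is to prove the statement by induction on the chain length $k$, working throughout in the diagonal coordinates provided by the homeomorphisms $c_{N_i}$. In these coordinates each $N_i$ is $\overline{{\bf B}_u}\times\overline{{\bf B}_{s(N_i)}}$, each exit face $N_i^-$ is the product $\partial\overline{{\bf B}_u}\times\overline{{\bf B}_{s(N_i)}}$, and each chart-version $f_{c,i}$ is homotopic to the model map $h_{1,i}(p,q)=(A_i(p),0)$ with $\deg(A_i,\overline{{\bf B}_u},0)\neq 0$. Since a single chain of length $k$ is what we must handle, I would not merely carry the weak statement ``there exists one such $p$'' through the induction; I would strengthen the inductive hypothesis to ``the set of valid initial conditions in $\Int N_0$ contains a horizontal disk,'' meaning a continuous $b:\overline{{\bf B}_u}\to\Int N_0$ such that the unstable projection $\pi_u\circ c_{N_0}\circ b$ has nonzero Brouwer degree over $\Int\overline{{\bf B}_u}$. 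The strengthened statement is strictly needed: the inductive step must intersect the preimage of a disk (not just of a point) with the preimage of the next set.

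The key single-step lemma I would establish first is that a covering relation $N\overset{f}{\Longrightarrow}M$ implies, for every $y\in\Int M_c$, that $f_c^{-1}(y)\cap\Int N_c$ is nonempty, and moreover the preimage has nonzero local degree in a neighborhood of some horizontal slice. This is where the two homotopy conditions in Definition~\ref{dfn-covrel} do all the work: the disjointness $h([0,1],N_c^-)\cap M_c=\emptyset$ keeps the unstable boundary clear of $y$ for every $\lambda$, while $h([0,1],N_c)\cap M_c^+=\emptyset$ keeps the stable boundary clear of $y$. Together these guarantee that the local degree of $h_\lambda$ on $\Int N_c$ over $y$ is independent of $\lambda\in[0,1]$. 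Evaluating at $\lambda=1$ this degree factors as the product $\deg(A,\overline{{\bf B}_u},\pi_u(y))\cdot(\pm 1)$ coming from the split $h_1(p,q)=(A(p),0)$, so it is nonzero by the last bullet of \eqref{cov-degree}.

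Now the inductive step. Assume the horizontal-disk form of the conclusion for the chain $N_1\Longrightarrow\cdots\Longrightarrow N_k$: there is a horizontal disk $D_1\subset\Int N_1$ all of whose points have the desired forward-orbit property. Pull $D_1$ back through $N_0\overset{f_0}{\Longrightarrow}N_1$: for each $y\in D_1$ the lemma above gives a preimage component in $\Int N_0$ on which the local degree is nonzero, and one assembles these into a horizontal disk $D_0\subset\Int N_0$ by a standard parametric version of the degree argument (using that the deformation $h_\lambda$ of $f_{c,0}$ can be applied uniformly in $y$). Any point of $D_0$ then satisfies the conclusion. The base case $k=0$ is vacuous; for $k=1$ the lemma already supplies a horizontal disk in $\Int N_0$.

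The main obstacle is the parametric pullback that produces $D_0$ from $D_1$. One needs to check two things simultaneously: first, that the preimage of $D_1$ under $f_0$ is parametrized by $\overline{{\bf B}_u}$ (i.e.\ can be written as a genuine horizontal disk and not merely a ``disk'' with holes or extra components), and second, that its boundary is mapped into $N_0^-$ and avoids $N_0^+$, so that it can itself be fed into the next pullback step. Both points are reduced to the homotopy $h_\lambda$ once one verifies compatibility of the unstable and stable projections under composition; the verification is unpleasant because one must keep track, along the homotopies of both $f_0$ and of the disk $b_1$ parametrizing $D_1$, of the two disjointness conditions simultaneously. I expect this bookkeeping to be the bulk of the technical effort, and it is essentially the content of the covering-relation calculus developed in \cite{ZCov, ZG}; once it is in place, the induction closes and Proposition~\ref{ZG-periodic} follows.
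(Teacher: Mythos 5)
The paper does not give its own proof of this statement; it is quoted as Theorem~4 of \cite{ZG}. So I will assess your proposal on its own terms. The difficulty is that your ``key single-step lemma'' is false, and since the entire induction leans on it, there is a genuine gap.

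You claim that a covering relation $N\overset{f}{\Longrightarrow}M$ implies $f_c^{-1}(y)\cap\Int N_c\neq\emptyset$ for \emph{every} $y\in\Int M_c$, with nonzero local degree, deduced from homotopy invariance of the degree of $h_\lambda$ over $y$. This is not what the covering relation controls. Take $u=s(N)=s(M)=1$, $N_c=M_c=[-1,1]^2$, and $f_c(p,q)=(3p,0)$ with the constant homotopy $h_\lambda=f_c$ and $A(p)=3p$. All conditions of Definition~\ref{dfn-covrel} are verified (here $h(N_c^-)=\{\pm3\}\times\{0\}$ misses $M_c$, $h(N_c)=[-3,3]\times\{0\}$ misses $M_c^+$, and $\deg(A,\overline{{\bf B}_1},0)=1$), yet $f_c^{-1}(y)=\emptyset$ for any $y=(y_u,y_s)$ with $y_s\neq0$. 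The reason is structural: the model map $h_1(p,q)=(A(p),0)$ has image inside $\mathbb{R}^u\times\{0\}$, a set of empty interior, so the local degree of $h_1$ over generic interior points of $M_c$ is zero. Moreover the homotopy-invariance step itself is not available, because the second disjointness condition $h([0,1],N_c)\cap M_c^+=\emptyset$ does not prevent $y$ from lying in $h_\lambda(N_c^+)$ when $y\in\Int M_c$, so the degree of $h_\lambda$ over $y$ need not even be well-defined, let alone $\lambda$-independent.

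What the covering relation does control is the degree of a \emph{different} map: roughly, one either computes the degree of a product/chain map on $\prod_i N_{i,c}$ over a target chosen so that only the unstable directions carry the degree, or one propagates \emph{horizontal disks forward} through the chain, as in Proposition~\ref{WZ-heteroclinic}. The forward version is the one that survives: if $b$ is a horizontal disk in $N$ and $N\overset{f}{\Longrightarrow}M$, then a suitable restriction of $f\circ b$ is a horizontal disk in $M$; the degree condition is precisely what makes the unstable projection of $f\circ b$ wrap nontrivially over $\overline{{\bf B}_u}$, while the two disjointness conditions keep the boundary of the disk in $M_c^-$ and its interior away from $M_c^+$. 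Your strengthened induction hypothesis (the set of good initial conditions contains a horizontal disk) is a sensible idea, but it has to be propagated forward, not pulled back pointwise. If you want to keep a pullback formulation, you must pull back a full \emph{vertical disk} in $M$, not a single point, and even then the argument that the preimage carries a horizontal-disk structure requires the degree computation to be done in the diagonal coordinates of the composite, not point-by-point. As it stands, the proposal does not close.
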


The concept of {\em horizontal and vertical disks} are useful to describe asymptotic trajectories in terms of covering relations.

\begin{dfn}[Horizontal and vertical disk, e.g. \cite{W, ZCov}]\rm
\label{dfn-disks}
Let $N$ be an $h$-set. 
Let $b_s: \overline{{\bf B}_{s(N)}}\to N$ be continuous and let $(b_s)_c = c_N \circ b_s$. We say that $b_s$ is a {\em vertical disk in $N$} if there exists a homotopy $h: [0,1]\times \overline{{\bf B}_{s(N)}}\to N_c$ such that
\begin{align*}
h_0 &= (b_s)_c,\quad h_1(x) = (0,x),\quad \text{ for all }x\in \overline{{\bf B}_{s(N)}},\\
h(t,x) &\in N_c^+,\quad \text{ for all }t\in [0,1]\text{ and }x\in \partial {\bf B}_{s(N)}.
\end{align*}
Let $b_u: \overline{{\bf B}_{u(N)}}\to N$ be continuous and let $(b_u)_c = c_N \circ b_u$. We say that $b_u$ is a {\em horizontal disk in $N$} if there exists a homotopy $h: [0,1]\times \overline{{\bf B}_{u(N)}}\to N_c$ such that
\begin{align*}
h_0 &= (b_u)_c,\quad h_1(x) = (x,0),\quad \text{ for all }x\in \overline{{\bf B}_{u(N)}},\\
h(t,x) &\in N_c^-,\quad \text{ for all }t\in [0,1]\text{ and }x\in \partial {\bf B}_{u(N)}.
\end{align*}
\end{dfn}

Combining these concepts with covering relations, we obtain the following result, 
which is often applied to the existence of homoclinic and heteroclinic orbits.
\begin{prop}[e.g., Theorem 3.9 in \cite{W}]
\label{WZ-heteroclinic}
Let $N_i, i=0,1,\cdots, k$ be $h$-sets such that $u(N_i)= u$ for $i=0,1,\cdots, k$ and let $f_i: N_i \to \mathbb{R}^{\dim (N_{i+1})}$, $i=0,1,\cdots, k-1$, be continuous. Let $v: \overline{{\bf B}_{s(N_k)}}\to N_k$ be a vertical disk in $N_k$. If $N_i\overset{f_i}{\Longrightarrow}N_{i+1}$ holds for $i=0,1,\cdots, k-1$, then there exists $x\in N_0$ such that
\begin{align*}
&(f_i\circ f_{i-1}\circ \cdots \circ f_0)(x) \in N_{i+1},\quad \text{ for }i=0,1,\cdots, k-2,\\
&(f_{k-1}\circ f_{k-2}\circ \cdots \circ f_0)(x)\in v(\overline{{\bf B}_{s(N_k)}}).
\end{align*}
Moreover, if $b: \overline{{\bf B}_u}\to N_0$ is a horizontal disk in $N_0$, then the is a point $\tau \in \overline{{\bf B}_u}$ such that
\begin{align*}
&(f_i\circ f_{i-1}\circ \cdots \circ f_0)(b(\tau)) \in N_{i+1},\quad \text{ for }i=0,1,\cdots, k-2,\\
&(f_{k-1}\circ f_{k-2}\circ \cdots \circ f_0)(b(\tau))\in v(\overline{{\bf B}_{s(N_k)}}).
\end{align*}
\end{prop}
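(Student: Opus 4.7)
My plan is to deduce the horizontal-disk refinement by a Brouwer-degree argument, from which the first assertion follows by taking $b$ to be the tautological horizontal disk $b(\tau) = c_{N_0}^{-1}(\tau,0)$ in $N_0$. Working in the diagonal coordinates, set $(f_i)_c := c_{N_{i+1}} \circ f_i \circ c_{N_i}^{-1}$, $F_c := (f_{k-1})_c \circ \cdots \circ (f_0)_c$, $b_c := c_{N_0}\circ b$ and $v_c := c_{N_k}\circ v$, and define
\[
G : \overline{{\bf B}_u} \times \overline{{\bf B}_{s(N_k)}} \to \mathbb{R}^u \times \mathbb{R}^{s(N_k)},\qquad G(\tau, y) := F_c(b_c(\tau)) - v_c(y),
\]
so that any zero of $G$ at $(\tau_0, y_0)$ produces the required point $b(\tau_0)\in N_0$ with $(f_{k-1}\circ \cdots \circ f_0)(b(\tau_0)) = v(y_0) \in v(\overline{{\bf B}_{s(N_k)}})$. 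The intermediate inclusions $(f_i \circ \cdots \circ f_0)(b(\tau_0)) \in N_{i+1}$ for $i=0,\ldots, k-2$ follow by applying the same reasoning to each subchain $N_0 \Rightarrow \cdots \Rightarrow N_{i+1}$ together with the vertical-disk induced by the orbit itself, or directly from the propagation of the boundary-avoidance property established below.

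The strategy is to construct a homotopy $H_\lambda$ on $\overline{{\bf B}_u} \times \overline{{\bf B}_{s(N_k)}}$ with $H_0 = G$ and $H_1(\tau, y) = (A(\tau), -y)$, where $A := A^{(k-1)} \circ \cdots \circ A^{(0)}$ is the composition of the auxiliary $u$-dimensional maps from (\ref{cov-degree}). I would build $H_\lambda$ by concatenating $k+2$ stages in order: first deform $b_c$ to the straight horizontal model $\tau \mapsto (\tau, 0)$ via the homotopy provided by Definition \ref{dfn-disks}; next, for $i=k-1, k-2, \ldots, 0$ (working from the tail end inwards, so that on the $i$-th stage the subsequent maps have already been replaced by $(A^{(i+1)}, 0) \circ \cdots \circ (A^{(k-1)}, 0)$ acting only on the unstable coordinate), deform $(f_i)_c$ to $(p,q)\mapsto (A^{(i)}(p), 0)$ via the covering-relation homotopy $h^{(i)}$ of Definition \ref{dfn-covrel}; finally deform $v_c$ to $y\mapsto (0, y)$ via the vertical-disk homotopy. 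The resulting end map $H_1(\tau, y) = (A(\tau), 0) - (0, y) = (A(\tau), -y)$ has Brouwer degree
\[
\deg(H_1, \Int(\overline{{\bf B}_u} \times \overline{{\bf B}_{s(N_k)}}), 0) = \pm \prod_{i=0}^{k-1} \deg(A^{(i)}, \overline{{\bf B}_u}, 0) \neq 0
\]
by the product formula and (\ref{cov-degree}), so by homotopy invariance $G^{-1}(0) \neq \emptyset$.

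The main obstacle is verifying that $H_\lambda(\tau, y) \neq 0$ throughout the homotopy on the topological boundary $\partial(\overline{{\bf B}_u} \times \overline{{\bf B}_{s(N_k)}})$, which is the step that fully exploits the covering-relation axioms. On the face $\tau \in \partial \overline{{\bf B}_u}$, the horizontal-disk straightening keeps $b_c(\tau) \in (N_0)_c^-$, and the axiom $h^{(i)}([0,1], (N_i)_c^-) \cap (N_{i+1})_c = \emptyset$ propagates stage by stage so that the running image of $\tau$ never enters $(N_{i+1})_c$; in particular it cannot coincide with any $v_c(y) \in (N_k)_c$, so $H_\lambda(\tau, y)\neq 0$. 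On the complementary face $y\in \partial \overline{{\bf B}_{s(N_k)}}$, the vertical-disk straightening keeps $v_c(y) \in (N_k)_c^+$, while the dual axiom $h^{(i)}([0,1], (N_i)_c) \cap (N_{i+1})_c^+ = \emptyset$ keeps the running image of the forward composition outside $(N_k)_c^+$ at the final step, so the two terms again cannot cancel. The delicate bookkeeping is the precise alignment of the stages with these axiomatic disjointness properties together with a Tietze extension of each $(f_i)_c$ to a neighborhood of $(N_i)_c$ so that the concatenated homotopy is globally defined; both are carried out exactly as in the proof of Theorem 3.9 of \cite{W}. Once boundary nonvanishing is secured, the degree calculation above finishes the proof.
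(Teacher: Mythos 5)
The paper does not prove this proposition; it is quoted directly from the literature (Theorem~3.9 of \cite{W}), so there is no in-paper argument to compare against. Judged on its own, your degree-theoretic strategy is the right family of ideas, but the specific formulation has a genuine gap that would make it collapse.

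The issue is the choice to build the auxiliary map $G(\tau,y)=F_c(b_c(\tau))-v_c(y)$ out of the \emph{composed} map $F_c=(f_{k-1})_c\circ\cdots\circ(f_0)_c$. To make $F_c$ globally defined you Tietze-extend each $(f_i)_c$ off $(N_i)_c$, but those extensions are completely uncontrolled. The covering-relation axioms (\ref{cov-degree}) and the disjointness conditions $h^{(i)}([0,1],(N_i)_c^-)\cap(N_{i+1})_c=\emptyset$, $h^{(i)}([0,1],(N_i)_c)\cap(N_{i+1})_c^+=\emptyset$ only constrain what $h^{(i)}$ does \emph{on} $(N_i)_c$ (resp.\ on $(N_i)_c^-$). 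Take $\tau\in\partial\overline{{\bf B}_u}$: then $b_c(\tau)\in(N_0)_c^-$, so $(f_0)_c(b_c(\tau))\notin(N_1)_c$ — but ``outside $(N_1)_c$'' is \emph{not} the same as ``in $(N_1)_c^-$'', and once you apply the Tietze extension of $(f_1)_c$ to a point outside $(N_1)_c$ you have no control whatsoever over where it lands; in particular the extended image may re-enter $(N_2)_c$, and your claimed ``stage-by-stage propagation'' of boundary avoidance is false. The same phenomenon undermines the intermediate inclusions: a zero of your $G$ only guarantees that the Tietze-extended composition lands on the vertical disk, and it is entirely consistent with some $(f_{j-1})_c\circ\cdots\circ(f_0)_c(b_c(\tau_0))$ escaping $(N_j)_c$ and the later extended maps dragging it back. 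Your fallback (``apply the same reasoning to each subchain'') produces a possibly different $\tau$ for each $i$, and ``the vertical disk induced by the orbit itself'' is not a vertical disk (an orbit point is a point, not a graph over $\overline{{\bf B}_{s}}$).

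The standard fix — and the one used in the source this proposition is quoted from — is not to compose the maps, but to work on the \emph{product} space $P:=\overline{{\bf B}_u}\times (N_1)_c\times\cdots\times(N_{k-1})_c\times\overline{{\bf B}_{s(N_k)}}$ and take $\mathcal{G}(\tau,x_1,\dots,x_{k-1},y):=\bigl((f_0)_c(b_c(\tau))-x_1,\ (f_1)_c(x_1)-x_2,\ \dots,\ (f_{k-1})_c(x_{k-1})-v_c(y)\bigr)$. Now every $(f_i)_c$ is evaluated only on its own $(N_i)_c$, so the covering-relation homotopies $h^{(i)}$ and the axioms apply directly, the intermediate inclusions are encoded as separate coordinates and come out for free from the zero of $\mathcal{G}$, and the boundary of $P$ decomposes into faces on each of which one of the axiomatic disjointness conditions applies cleanly. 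After straightening $b_c$, $v_c$, and each $(f_i)_c$ simultaneously (or in any order — independence of the factors is the whole point), the degree of the product model map is $\pm\prod_i\deg(A^{(i)},\overline{{\bf B}_u},0)\ne 0$ exactly as in your calculation. So the skeleton of your proof is salvageable, but the single-composition formulation is the step that does not survive scrutiny; you need the product-space bookkeeping.
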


Integration of (regular) differential equations is realized by, say Lohner's method (e.g., \cite{ZLoh}) with computer assistance.

\subsection{Explicit estimates of arrival time with computer assistance}
Here we provide an explicit estimate methodology of arrival times.
Assume that the vector field (\ref{ODE}) has {\em a degeneracy} at a zero of $T(x) = 0$ for some function $T$, and that admit the following time-scale desingularization
\begin{equation}
\label{time-desing}
\frac{d\tau}{dt} = T(x(t))
\end{equation}
to obtain the {\em desingularized} vector field
\begin{equation}
\label{ODE-desing}
\dot x \equiv \frac{dx}{d\tau} = \frac{dx}{dt}\frac{dt}{d\tau} = T(x)^{-1}f(x, \mu).
\end{equation}
The desingularized vector field is supposed to possess the same dynamics structure as (\ref{ODE}) in the sense of {\em orbital equivalence}; namely $T(x) > 0$ at least off the degeneracy.
Let $\varphi^{des}_{\mu}$ be the flow generated by (\ref{ODE-desing}) with fixed $\mu\in \mathbb{R}^k$.
The main assumption in this subsection is summarized as follows.
\begin{ass}
There is a nonnegative smooth function $T:\mathbb{R}^n \to \mathbb{R}$ such that
the vector field (\ref{ODE}) has {\em a degeneracy} at a zero of $T(x) = 0$, and that the vector-valued function $T(x)^{-1}f(x, \mu)$ is smooth everywhere.
We shall call the zero of $T(x)=0$ where the original vector field (\ref{ODE}) degenerates {\em a degeneracy-inducing point}.
The point $x_\ast$ denote the degeneracy-inducing point.
Moreover, the function $T(x)$ is positive in an open subset $U \subset \mathbb{R}^n\setminus \{x_\ast\}$.
\end{ass}

If one validates a time-global orbit for (\ref{ODE-desing}) {\em in $U$}, then we calculate the maximal existence time
\begin{equation*}
t_{\max} = \int_0^\infty \frac{d\tau}{T(x(\tau))}
\end{equation*}
in the original $t$-time scale, which gives information of trajectories in the original problem (\ref{ODE}).
\par
The next problem we have to consider is how we calculate $t_{\max}$, which looks very non-trivial since the integral has to be considered on {\em infinite} set and the {\em whole} trajectory $\{x(\tau)\}_{\tau \geq 0}$ is required. 
The basic idea we apply here is {\em Lyapunov tracing} discussed in \cite{MHY2016, TMSTMO}, namely, computation of $t_{\max}$ of trajectory $\{x(\tau)\}$ in terms of Lyapunov functions around degeneracy-inducing point $x_\ast$\footnote{
In the case of blow-up solutions, the degeneracy-inducing point corresponds to equilibria \lq\lq at infinity".
}.
Solutions with finite-time singularity correspond to trajectories on {\em stable manifolds of hyperbolic equilibria $x_\ast$ for (\ref{ODE-desing})}\footnote
{
In general, such equilibria for (\ref{ODE-desing}) are {\em not} those for (\ref{ODE}).
}.
According to this fact and preceding methodology in \cite{TMSTMO, MT2017}, we validate asymptotic behavior of finite-time singularities by the following steps.

\begin{alg}[Asymptotic behavior around degeneracy-inducing points]
\label{alg-asym}
\begin{enumerate}
\item Validate a degeneracy-inducing point $x_\ast$ as well as an isolating block $N$ for degenerate vector field (\ref{ODE-desing}) such that $x_\ast \in {\rm int}N$. 
\item Validate a trajectory on the stable manifold $W^s(x_\ast; \varphi^{des}_\mu)$ of $x_\ast\in N$.
\item Construct a Lyapunov function on $W^s(x_\ast; \varphi^{des}_\mu)\cap N$.
\item Verify whether $T(x)$ has an identical signature on each component of $(W^s(x_\ast; \varphi^{des}_\mu)\setminus \{x_\ast\})\cap N$.
\item Calculate an enclosure of $t_{\max}$.
\end{enumerate}
\end{alg}
Whole steps for validating asymptotic behavior in Algorithm \ref{alg-asym} is the same as those for validating blow-up solutions \cite{TMSTMO, MT2017} except Step 4.
Step 4 is the additional part for guaranteeing the orbital equivalence of trajectories between for (\ref{ODE}) and for (\ref{ODE-desing})\footnote{
In the case of blow-up solutions, the special choice of $T(x)$ automatically validates the condition in Step 4.
}.

\subsubsection{Logarithmic norms}
\label{section-CZ}
We give several notations used later.

\begin{dfn}[Logarithmic norms, cf. \cite{CZ2015}]\rm
For a squared matrix $A\in \mathbb{R}^{n\times n}$, define the matrix norm $m(A)$ by
\begin{equation*}
m(A) = \sup_{z\in \mathbb{R}^n, \|z\| =1}\|Az\|,
\end{equation*}
which in general depends on the norm $\|\cdot \|$ on $\mathbb{R}^n$.
The {\em logarithmic norm} of $A$ denoted by $l(A)$ is given by
\begin{equation*}
l(A) = \lim_{h\to +0}\frac{\|I+hA\|-1}{h}
\end{equation*}
and the {\em logarithmic minimum} of $A$ is given by
\begin{equation*}
m_l(A) = \lim_{h\to +0}\frac{m(I+hA)-1}{h}.
\end{equation*}
\end{dfn}

We gather several fundamental facts of $l(A)$ and $m_l(A)$ in the following lemma.
\begin{lem}[cf. \cite{CZ2015, CZ2016}]
\label{lem-log}
\begin{enumerate}
\item The limits in the definition of $l(A)$ and $m_l(A)$ exist and we have $m_l(A)=-l(-A)$.
\item For the Euclidean norm, we also have
\begin{align*}
l(A) &= \max\{\lambda \in {\rm Spec}((A+A^T)/2)\},\quad 
m_l(A) = \min\{\lambda \in {\rm Spec}((A+A^T)/2)\}.
\end{align*}
In particular, under the standard Euclidean norm, the following inequality holds:
\begin{equation*}
m_l(A)|x|^2 \leq x^T A x \leq l(A) |x|^2.
\end{equation*}
\end{enumerate}
\end{lem}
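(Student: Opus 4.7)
The statement collects classical facts about matrix measures (cf.\ \cite{CZ2015, CZ2016}), so the plan is to organise the verification via convex-analysis facts about norms and Rayleigh's characterisation of symmetric eigenvalues.

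For part 1, the decisive observation is that $f(h) := \|I+hA\|$ is convex in $h \in \mathbb{R}$: the map $h \mapsto I+hA$ is affine and $\|\cdot\|$ is a norm. Convexity together with $f(0) = 1$ forces $(f(h)-1)/h$ to be non-decreasing in $h > 0$, so its limit as $h \to 0^+$ exists in the extended reals; the reverse triangle inequality $f(h) \geq 1 - h\|A\|$ pins the limit inside $[-\|A\|, \|A\|]$, giving finiteness of $l(A)$. Existence of $m_l(A)$ and the identity $m_l(A) = -l(-A)$ will be derived simultaneously: for $h$ small enough, $I+hA$ is invertible with $m(I+hA) = 1/\|(I+hA)^{-1}\|$; the Neumann expansion $(I+hA)^{-1} = I + h(-A) + O(h^2)$ gives $\|(I+hA)^{-1}\| = \|I + h(-A)\| + O(h^2) = 1 + h\,l(-A) + o(h)$, so $(m(I+hA)-1)/h \to -l(-A)$ as $h \to 0^+$.

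For part 2, a direct computation in the Euclidean norm yields
\begin{equation*}
\|I+hA\|^2 = \sup_{|x|=1}\bigl(1 + h\,x^T(A+A^T)x + h^2 |Ax|^2\bigr).
\end{equation*}
Setting $S := (A+A^T)/2$, Rayleigh's principle identifies $\max_{|x|=1} x^T(A+A^T)x = 2\lambda_{\max}(S)$, attained at a top unit eigenvector $x^*$. Sandwiching the $h^2$ term between $|Ax^*|^2$ (from plugging in $x^*$) and $\|A\|^2$ (uniform upper bound) shows $\|I+hA\|^2 = 1 + 2h\lambda_{\max}(S) + O(h^2)$, whence $l(A) = \lambda_{\max}(S)$ on taking square roots. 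The dual computation with the infimum gives $m_l(A) = \lambda_{\min}(S)$, and the final quadratic-form inequality follows from $x^T A x = x^T S x$—a consequence of the scalar identity $x^T A x = (x^T A x)^T = x^T A^T x$—together with the Rayleigh bounds $\lambda_{\min}(S)|x|^2 \leq x^T S x \leq \lambda_{\max}(S)|x|^2$.

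The main nuisance I anticipate is the uniformity of the expansion in part 2: the $O(h^2)$ remainder inside the supremum must be controlled so as not to contaminate the extraction of the linear coefficient after the square root. This is dispatched by the two-sided polynomial bound indicated (lower via $x^*$, upper via $\|A\|^2$), and no other step requires attention beyond routine algebra.
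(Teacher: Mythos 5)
Your proof is correct, and it is self-contained; the paper itself does not prove this lemma but delegates to the cited references \cite{CZ2015, CZ2016}, so there is no in-paper argument to compare against. Your convexity argument for the existence of $l(A)$, the Neumann-series computation giving $m_l(A)=-l(-A)$, and the Rayleigh-quotient expansion of $\|I+hA\|^2$ with the two-sided $O(h^2)$ control are all sound, and the final quadratic-form inequality correctly reduces to Rayleigh bounds on the symmetric part via $x^TAx = x^TSx$. One thing worth flagging: the paper defines $m(A)$ as $\sup_{\|z\|=1}\|Az\|$, which is literally the operator norm and would make $m_l(A)=l(A)$, contradicting both parts of the lemma; this must be a typo for $\inf_{\|z\|=1}\|Az\|$, and your argument tacitly (and correctly) uses that corrected definition both in the inverse-norm identity $m(I+hA)=1/\|(I+hA)^{-1}\|$ and in the ``dual computation with the infimum'' for $m_l(A)=\lambda_{\min}(S)$. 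It would strengthen your write-up to state that correction explicitly rather than rely on the reader inferring it.
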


This lemma is used for validating (un)stable manifolds of (hyperbolic) equilibria discussed below.

\subsubsection{Stable manifold validations}
\label{section-stable-manifold}
First we review known validation procedures of stable manifolds of hyperbolic equilibria.
Consider the {\em smooth} vector field (\ref{ODE}) or (\ref{ODE-desing}) admitting a (smooth) change of coordinate $P_\mu \equiv c_N^{-1} : \mathbb{R}^{n_u+n_s}\to \mathbb{R}^n,\ z\equiv (a,b)\mapsto x$, such that (\ref{ODE}) is (locally) equivalent to
\begin{equation}
\label{ODE-coord}
\begin{cases}
\dot a = f^a(a,b; \mu), & \\
\dot b = f^b(a,b; \mu) &
\end{cases}
\quad \text{ with }\quad n_u + n_s = n.
\end{equation}
This is of our main interest for practical validations of solution trajectories.

\begin{prop}[Stable and unstable manifold validation, e.g., \cite{CZ2015, ZCov}]
\label{prop-stable-manifold}
Let $x_\ast = x_\ast(\mu)\in \mathbb{R}^n$ be a $\mu\in K$-parameterized family of equilibria for a smooth vector field (\ref{ODE-coord}), where $K\subset \mathbb{R}^k$ is a compact set.
Let $N$ be an $n$-dimensional $h$-set with $c_N\equiv P_\mu^{-1}$ containing $x_\ast$ such that its coordinate representation $c_N(N) = \overline{{\bf B}_{n_u}} \times \overline{{\bf B}_{n_s}}$ is characterized by the coordinate $(a,b)$ in $\mathbb{R}^{n_u+n_s}$ and that it is an isolating block for (\ref{ODE-coord}).
For $M > 1$, let
\begin{align*}
\overrightarrow{\mu_{s}} &= \overrightarrow{\mu_{s}}(N,K) = \sup_{(x,\mu)\in N\times K} \left\{ l\left(\frac{\partial f^b}{\partial b}(x;\mu) \right) + M \left\| \frac{\partial f^b}{\partial (a,\mu)}(x;\mu)\right\|\right\}, \\
\overrightarrow{\xi_{u}} &= \overrightarrow{\xi_{u}}(N,K) = \inf_{(x,\mu)\in N\times K}m_l\left( \frac{\partial f^a}{\partial a}(x;\mu) \right) - M\sup_{(x,\mu)\in N\times K} \left\| \frac{\partial f^a}{\partial (b,\mu)}(x;\mu) \right\|,\\
\overrightarrow{\mu_{ss}} &= \overrightarrow{\mu_{ss}}(N,K) = \sup_{(x,\mu)\in N\times K} \left\{ l\left(\frac{\partial f^b}{\partial b}(x;\mu) \right) + \frac{1}{M} \left\| \frac{\partial f^b}{\partial a}(x;\mu)\right\|\right\}, \\ 
\overrightarrow{\xi_{su}} &= \overrightarrow{\xi_{su}}(N,K) = \inf_{(x,\mu)\in N\times K}m_l\left( \frac{\partial f^a}{\partial a}(x;\mu) \right) - \frac{1}{M}\sup_{(x,\mu)\in N\times K} \left\| \frac{\partial f^a}{\partial b}(x;\mu) \right\|.
\end{align*}
We say that the vector field $(f^a, f^b)^T$ satisfies the {\em $M$-cone condition} in $N\times K$ if
\begin{align}
\label{rate-fss-1}
&\overrightarrow{\mu_{s}} < 0 < \overrightarrow{\xi_{u}},\\
\label{rate-fss-2}
&\overrightarrow{\mu_{ss}} < \overrightarrow{\xi_{u}},\quad \overrightarrow{\mu_{s}} < \overrightarrow{\xi_{su}}.
\end{align}
We assume the $M$-cone condition of $(f^a, f^b)^T$ in $N\times K$.
Then, for each $\mu\in K$, there are Lipschitzian functions $\sigma^u_\mu : \overline{{\bf B}_{n_u}} \to \overline{{\bf B}_{n_s}}$ and $\sigma^s_\mu : \overline{{\bf B}_{n_s}} \to \overline{{\bf B}_{n_u}}$ such that
\begin{equation*}
W^u(x_\ast(\mu))\cap N = \{c_N^{-1}(a,\sigma^u_\mu(a)) \mid a\in \overline{{\bf B}_{n_u}} \}, \quad W^s(x_\ast(\mu))\cap N = \{c_N^{-1}(\sigma^s_\mu(b),b) \mid b\in \overline{{\bf B}_{n_s}} \}.
\end{equation*}
Namely, the stable and unstable manifolds of $x_\ast$ are given by the graphs of Lipschitzian functions in $N$.
In particular, $\sigma^u_\mu$ is a horizontal disk and $\sigma^s_\mu$ is a vertical disk in $N$ for (\ref{ODE-coord}).
Moreover, functions $\sigma^u_\mu$ and $\sigma^s_\mu$ have the Lipschitz constants $1/M$ and are also continuous with respect to $\mu\in K$.
\end{prop}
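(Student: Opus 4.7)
The proof will follow the classical cone-condition approach of Zgliczy\'{n}ski and Capi\'{n}ski--Zgliczy\'{n}ski cited in the statement. The plan is to work in the diagonal coordinates via $c_N$, where the isolating block becomes the product $\overline{{\bf B}_{n_u}} \times \overline{{\bf B}_{n_s}}$ and the system takes the form (\ref{ODE-coord}), and to use the cone inequalities (\ref{rate-fss-1})--(\ref{rate-fss-2}) to set up quadratic functionals measuring the separation of pairs of trajectories, from which the Lipschitz graph representations will be extracted.

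First I would fix $\mu \in K$ and consider two solutions $z_i(\tau) = (a_i(\tau), b_i(\tau))$, $i=1,2$, of (\ref{ODE-coord}) both remaining in $N$ on a time interval. Differentiating $\|a_1 - a_2\|^2$ and $\|b_1 - b_2\|^2$ and applying Lemma \ref{lem-log} together with the mean-value theorem yields one-sided bounds $\tfrac{d}{d\tau} \|\Delta a\|^2 \geq 2\overrightarrow{\xi_u}\|\Delta a\|^2 - (\text{cross term})\|\Delta a\|\|\Delta b\|$ and its $\Delta b$-dual. The first cone inequality (\ref{rate-fss-1}) then forces monotonicity of the quadratic form $Q(z,z') = \|a-a'\|^2 - M^2\|b-b'\|^2$ along pairs of trajectories confined to $N$: whenever $Q \geq 0$ it strictly increases, and whenever $Q \leq 0$ it strictly decreases.

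Next I would construct $\sigma^s_\mu$ as a graph over $\overline{{\bf B}_{n_s}}$. For fixed $b_0 \in \overline{{\bf B}_{n_s}}$, cone monotonicity rules out two distinct $a$-coordinates both producing forward orbits trapped in $N$: two such starting points would satisfy $Q \geq 0$ initially, force $\|\Delta a\|$ to grow, and eventually exit through the unstable face $c_N^{-1}(\partial {\bf B}_{n_u} \times \overline{{\bf B}_{n_s}})$, contradicting confinement. Existence of at least one trapped $a_0$ is obtained by a topological shooting argument tailored to the isolating block: the exit set $N^{\exit}$ coincides with the unstable face, and a Brouwer-type / degree argument applied to the first-exit map along the slice $\{(a,b_0) : a \in \overline{{\bf B}_{n_u}}\}$ yields $a_0 = \sigma^s_\mu(b_0)$ whose forward orbit is trapped in $N$ and therefore, by $\Inv(N) \subset \Int(N)$, must converge to $x_\ast(\mu)$. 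The Lipschitz bound $\|\sigma^s_\mu(b_0) - \sigma^s_\mu(b_1)\| \leq (1/M)\|b_0 - b_1\|$ is then read off from $Q \leq 0$ along pairs of such forward-trapped initial points, and $\sigma^u_\mu$ is produced symmetrically by time reversal.

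For continuity in $\mu$, I would run the same argument on two trajectories with distinct parameters $\mu_1, \mu_2 \in K$. Here the $\mu$-derivatives of $f^a$ and $f^b$ feed into the growth estimates for $\Delta a, \Delta b$, and the strong cone inequalities (\ref{rate-fss-2}), which already absorb $\partial_\mu f$ into $\overrightarrow{\mu_s}$ and $\overrightarrow{\xi_u}$, close the estimate and yield Lipschitz continuity of $\sigma^{s,u}_\mu$ jointly in $(b,\mu)$ and $(a,\mu)$. The main obstacle I anticipate is the topological shooting step: one must verify that the flow-induced exit structure on $\partial N$ is genuinely aligned with the cone partition into unstable and stable faces, and that the coordinate homeomorphism $c_N$ is compatible with the ambient Euclidean norm underlying the logarithmic estimates of Lemma \ref{lem-log}. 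Reconciling the abstract $h$-set geometry with the analytic norm estimates is the delicate point, and for this step I would invoke the constructions of \cite{CZ2015, ZCov} rather than attempt a direct derivation.
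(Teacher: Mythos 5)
The paper does not give its own proof of Proposition~\ref{prop-stable-manifold}; it is stated as a known result, deferring to \cite{CZ2015, ZCov}. Your sketch reproduces the classical cone-condition argument from those references, so the overall strategy (cone monotonicity along trajectory pairs, topological shooting in the isolating block, extraction of the Lipschitz graph, parameterized version for $\mu$-dependence) is the right one and essentially matches what the paper implicitly relies on.

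There is, however, a concrete slip that makes your sketch internally inconsistent. You take $Q(z,z') = \|a-a'\|^2 - M^2\|b-b'\|^2$, for which $Q\leq 0$ is equivalent to $\|\Delta a\|\leq M\|\Delta b\|$, i.e.\ Lipschitz constant $M$, \emph{not} the claimed $1/M$. The form compatible with the stated stable cone $C^s_M=\{M\|a\|\leq\|b\|\}$ and with the asymmetric placement of $M$ versus $1/M$ in the four rate quantities is $\tilde Q = M^2\|\Delta a\|^2 - \|\Delta b\|^2$. Carrying out the one-sided derivative estimates via Lemma~\ref{lem-log} and evaluating on the cone boundary $\|\Delta b\|=M\|\Delta a\|$ gives
\begin{equation*}
\frac{1}{2}\frac{d\tilde Q}{d\tau}\;\geq\; M^2\|\Delta a\|^2\left[\, m_l\!\left(\tfrac{\partial f^a}{\partial a}\right) - M\left\|\tfrac{\partial f^a}{\partial b}\right\| - l\!\left(\tfrac{\partial f^b}{\partial b}\right) - \tfrac{1}{M}\left\|\tfrac{\partial f^b}{\partial a}\right\|\right]\;\geq\; M^2\|\Delta a\|^2\,\bigl(\overrightarrow{\xi_u}-\overrightarrow{\mu_{ss}}\bigr)\;>\;0,
\end{equation*}
which is exactly where $\overrightarrow{\mu_{ss}}<\overrightarrow{\xi_u}$ from (\ref{rate-fss-2}) is used; $\tilde Q\leq 0$ then yields the $1/M$ bound. (Your $Q$ also enjoys strict monotonicity, but via the other pair $\overrightarrow{\mu_s}<\overrightarrow{\xi_{su}}$, and it controls the wrong cone.) Apart from this bookkeeping issue, and the fact that you defer the degree/shooting existence step and the compatibility of the affine $h$-set coordinate with the Euclidean logarithmic-norm estimates to \cite{CZ2015, ZCov} (which is exactly what the paper does as well), the argument is sound.
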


This proposition gives not only the existence of stable and unstable manifolds of (hyperbolic) equilibria as graphs of Lipschitzian functions but also explicit criteria for their validations in {\em given} neighborhoods of equilibria.
The most remarkable point is that $W^s(x_\ast(\mu))\cap N$ is contained in the {\em stable $M$-cone} centered at $x_\ast(\mu)$:
\begin{equation*}
C^s_M(x_\ast; c_N^{-1}) = \{x = c_N^{-1}(a,b)\in \mathbb{R}^n\mid \|b\| \geq M\|a\|\}\text{ with }x_\ast = c_N^{-1}(0,0).
\end{equation*}
Similarly, $W^u(x_\ast(\mu))\cap N$ is contained in the {\em unstable $M$-cone} centered at $x_\ast(\mu)$:
\begin{equation*}
C^u_M(x_\ast; c_N^{-1}) = \{x = c_N^{-1}(a,b)\in \mathbb{R}^n\mid M\|b\| \leq \|a\|\}\text{ with }x_\ast = c_N^{-1}(0,0).
\end{equation*}

\begin{rem}[Validation of cone conditions in extended $h$-sets]
In practical validations, we verify cone conditions in larger $h$-sets than isolating blocks containing equilibria so that covering relations can be validated as easy as possible.
For example, assume that $N$ is an isolating block with $h$-set structure $c_N(N) = \overline{{\bf B}_{n_u}(0,R_u)}\times \overline{{\bf B}_{n_s}(0,R_s)}$ and radii $R_u, R_s > 0$ such that $N$ contains an equilibrium $x_\ast$.
Then, for given positive numbers $M=M_s$ and $\ell_s$, verify the $M_s$-cone condition in $c_N^{-1}\left( \overline{{\bf B}_{n_u}(0,R_u + \ell_s/M_s)}\times \overline{{\bf B}_{n_s}(0,R_s + \ell_s)} \right)$.
If the cone condition is validated, then the stable manifold $W^s(x_\ast)$ is contained in the union of stable cones
\begin{align*}
\{ x = c_N^{-1}(a,b) &\mid a\in \overline{{\bf B}_{n_u}(0,R_u + \ell_s/M_s)}, b\in \overline{{\bf B}_{n_s}(0,R_s + \ell_s)} \\
&\quad \left. \text{ with } M_s\|a-a_0\| \leq \|b-b_0\| \text{ for some }(a_0,b_0)\in c_N(N) \right\}.
\end{align*}
Similarly, for given positive numbers $M=M_u$ and $\ell_u$, verify the $M_u$-cone condition in $c_N^{-1}\left( \overline{{\bf B}_{n_u}(0,R_u + \ell_u)}\times \overline{{\bf B}_{n_s}(0,R_s + \ell_u/M_u)} \right)$.
If the cone condition is validated, then the unstable manifold $W^u(x_\ast)$ is contained in the union of unstable cones
\begin{align*}
\{ x = c_N^{-1}(a,b) &\mid a\in \overline{{\bf B}_{n_u}(0,R_u + \ell_u)}, b\in \overline{{\bf B}_{n_s}(0,R_s + \ell_u/M_u)} \\
&\quad \left. \text{ with } \|a-a_0\| \geq M_u\|b-b_0\| \text{ for some }(a_0,b_0)\in c_N(N) \right\}.
\end{align*}
The covering relations are then actually verified for such larger $h$-sets.
\end{rem}

\subsubsection{Lyapunov functions}

Next we construct a Lyapunov function on stable manifolds determined in an explicit domain which can be validated by rigorous numerics.

\begin{prop}[Lyapunov functions, cf. \cite{MHY2016}]
\label{prop-Lyapunov}
Let $x_\ast = x_\ast(\mu)\in \mathbb{R}^n$ be a $\mu\in K$-parameterized family of equilibria for a smooth vector field (\ref{ODE-coord}), where $K\subset \mathbb{R}^k$ is a compact set.
Let $N$ be a $n$-dimensional $h$-set containing $x_\ast(\mu)$ for all $\mu\in K$.
Assume that there is a real symmetric matrix $Y$ such that the matrix
\begin{equation}
\label{neg-def-A}
{\bf A}(x) := D\tilde f(x,\mu)^T Y + Y D\tilde f(x,\mu),\quad \tilde f = (f^a, f^b)^T,
\end{equation}
is strictly negative definite for all $(x,\mu)\in N\times K$.
Then, for each $\mu\in K$, $L_\mu(x) := (x-x_\ast(\mu))^T Y (x-x_\ast(\mu))$ is a Lyapunov function on $N$.
That is, $L_\mu$ satisfies the following properties;
\begin{itemize}
\item $\frac{dL_\mu}{d\tau}(x(\tau; \mu))|_{\tau=0} \leq 0$ for any $x=x(0)\in N$;
\item $\frac{dL_\mu}{d\tau}(x(\tau; \mu))|_{\tau=0} = 0$ implies $x=x_\ast$.
\end{itemize}
Moreover, if $x_\ast(\mu)$ is hyperbolic for all $\mu\in K$, then $L_\mu$ is also smooth with respect to $\mu$.
\end{prop}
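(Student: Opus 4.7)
The plan is to differentiate $L_\mu$ along trajectories, use the fact that $x_\ast(\mu)$ is an equilibrium to rewrite $\tilde f(x,\mu)$ as an integral of its Jacobian, and then recognize the resulting quadratic form as one involving ${\bf A}$. First, I would compute
\begin{equation*}
\frac{dL_\mu}{d\tau}(x(\tau;\mu)) = 2(x-x_\ast(\mu))^T Y \tilde f(x,\mu),
\end{equation*}
using that $Y$ is symmetric and independent of $\tau$. The appearance of $\tilde f$ (rather than its linearization) is the only nontrivial feature; the rest is a symmetrization argument.

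Next, since $\tilde f(x_\ast(\mu),\mu)=0$, the fundamental theorem of calculus gives
\begin{equation*}
\tilde f(x,\mu) = \int_0^1 D\tilde f\bigl(x_\ast(\mu) + s(x-x_\ast(\mu)),\mu\bigr)\,(x-x_\ast(\mu))\,ds.
\end{equation*}
Substituting into the previous expression and using the algebraic identity $2u^T Y B u = u^T(Y B + B^T Y) u$ with $B = D\tilde f(\,\cdot\,,\mu)$, I would obtain
\begin{equation*}
\frac{dL_\mu}{d\tau} = \int_0^1 (x-x_\ast(\mu))^T {\bf A}\bigl(x_\ast(\mu) + s(x-x_\ast(\mu))\bigr)\,(x-x_\ast(\mu))\,ds.
\end{equation*}
Here I need the line segment from $x_\ast(\mu)$ to $x$ to remain in $N$ so that the hypothesis ${\bf A}(\,\cdot\,) \prec 0$ on $N$ can be applied pointwise under the integral. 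This is the step where I would invoke the $h$-set structure: in the diagonal coordinate given by $c_N$, $N$ is the product $\overline{{\bf B}_{n_u}}\times \overline{{\bf B}_{n_s}}$, which is convex, so star-shapedness with respect to $x_\ast(\mu)\in N$ is automatic. I expect this convexity check to be the main subtle point, since without it the integrand is not controlled on the whole path.

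With the integrand strictly negative definite on $N\times K$, I conclude $dL_\mu/d\tau \le 0$ with equality only when $x-x_\ast(\mu)=0$ (the quadratic form in a negative definite matrix vanishes only at the origin), giving the two bullet-point properties. Finally, for smoothness in $\mu$: the hyperbolicity hypothesis means $D\tilde f(x_\ast(\mu),\mu)$ is invertible for each $\mu\in K$, so the implicit function theorem applied to $\tilde f(x_\ast,\mu)=0$ yields that $\mu \mapsto x_\ast(\mu)$ is as smooth as $\tilde f$. Since $Y$ is fixed and $L_\mu(x)$ is a polynomial in $x-x_\ast(\mu)$ with coefficients depending smoothly on $\mu$, the map $\mu\mapsto L_\mu$ is smooth, completing the proof.
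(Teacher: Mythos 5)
The paper states this proposition with only a citation to \cite{MHY2016} and provides no proof of its own, so there is nothing to compare against directly. Your proof is correct: the fundamental-theorem-of-calculus representation of $\tilde f$ using $\tilde f(x_\ast(\mu),\mu)=0$, followed by the symmetrization identity $2u^TYBu = u^T(YB+B^TY)u$ (valid because $Y=Y^T$) and the observation that $N$ is convex in the diagonal $(a,b)$-coordinate where $c_N(N)=\overline{{\bf B}_{n_u}}\times\overline{{\bf B}_{n_s}}$, is exactly the standard route to such quadratic Lyapunov functions, and your implicit-function-theorem argument for the smoothness in $\mu$ is also sound. One minor remark: the hyperbolicity hypothesis in the ``Moreover'' clause is in fact already implied by the strict negative definiteness of ${\bf A}$ at $x_\ast(\mu)$ (if $D\tilde f$ had a purely imaginary eigenvalue with real/imaginary eigenvector parts $v_1,v_2$, one would get $v_1^T{\bf A}v_1 + v_2^T{\bf A}v_2 = 0$, a contradiction), so your appeal to hyperbolicity for invertibility is justified but not strictly necessary.
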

In \cite{TMSTMO, MT2017}, Lyapunov functions around stable equilibria are applied to obtaining an upper bound of the maximal existence time $t_{\max}$ of divergent solutions.
The estimate relies on the monotonous behavior of trajectories near hyperbolic equilibria, and we expect that Lyapunov functions around equilibria, even for saddle type, provide finite bounds of $t_{\max}$.
In other words, hyperbolic equilibria at infinity would induce finite-time singularities.
In fact, hyperbolic and stable equilibria at infinity induce explicit bounds of $t_{\max}$ (\cite{TMSTMO, MT2017}).
This situation is expected to be also the case of degeneracy-inducing points which are hyperbolic for (\ref{ODE-desing}).
\par 
However, it immediately turns out that direct estimates of $t_{\max}$ in terms of Lyapunov functions fail in case that equilibria are saddles.
Indeed, the Lyapunov function $L$ may attain negative value in $N\setminus \{x_\ast(\mu)\}$, which violates upper bound estimates of $t_{\max}$ discussed in \cite{TMSTMO, MT2017}.
Now notice that our interests for estimates of $t_{\max}$ are not the whole neighborhood of equilibria, but only solutions on stable manifolds of hyperbolic equilibria.
This observation leads to an implementation of Lyapunov functions {\em only on stable manifolds of hyperbolic equilibria}, which are stated in the following proposition and are compatible with the form of $T(x)$.

\begin{prop}[Lyapunov functions on stable manifolds]
\label{prop-Lyapunov-sm}
Let $x_\ast = x_\ast(\mu)\in \mathbb{R}^n$ be an equilibrium for a smooth vector field (\ref{ODE-coord}) parameterized by $\mu\in K\subset\mathbb{R}^k$ with the change of coordinate $P_\mu$ such that $P_\mu(0,0) = x_\ast(\mu)$.
Let $N$ be an $n$-dimensional $h$-set containing $x_\ast(\mu)$ such that its coordinate representation $c_N(N) = \overline{{\bf B}_{n_u}} \times \overline{{\bf B}_{n_s}}$ is characterized by the coordinate $(a,b)$ in $\mathbb{R}^{n_u+n_s}$ and that it is an isolating block for (\ref{ODE-coord}).
Assume that $(f^a,f^b)^T$ satisfies the $M$-cone condition on $N$.
Further assume that
\begin{equation}
\label{rate-stable-sm}
\overrightarrow{\mu_{s, W^s}} := \sup_{z\in N\times K}\left\{ \frac{M^2+1}{M^3} \left\|\frac{\partial f^b}{\partial a}(z)\right\| + \frac{1}{M^2}m\left(\frac{\partial f^b}{\partial b}(z)\right) + l\left(\frac{\partial f^b}{\partial b}(z)\right)\right\} < 0.
\end{equation}
Then
\begin{equation}
\label{Lyapunov-sm}
L_{s}(a,b;\mu) := \|\sigma^s_\mu(b)\|^2 + \|b\|^2 
\end{equation}
is a Lyapunov function for (\ref{ODE-coord}) on $W^s(x_\ast(\mu))\cap N$, where $\sigma^s_\mu$ is the Lipschitz function such that $W^s(x_\ast(\mu))\cap N = \{(\sigma^s_\mu(b),b)\mid b\in \overline{{\bf B}_{n_s}}\}$.
See Proposition \ref{prop-stable-manifold}.
\end{prop}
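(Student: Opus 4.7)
The plan is to exploit the graph representation $a(\tau)=\sigma^s_\mu(b(\tau))$ of any solution in $W^s(x_\ast(\mu))\cap N$, where $\sigma^s_\mu$ is the $(1/M)$-Lipschitz map furnished by Proposition \ref{prop-stable-manifold} with $\sigma^s_\mu(0)=0$. Two consequences drive the estimate: (i) $\|\sigma^s_\mu(b)\|\le \|b\|/M$, immediate from Lipschitzianity and $\sigma^s_\mu(0)=0$; and (ii) $\|f^a(\sigma^s_\mu(b),b;\mu)\|\le (1/M)\,\|f^b(\sigma^s_\mu(b),b;\mu)\|$ pointwise on $W^s\cap N$. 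Statement (ii) is obtained \emph{without} any differentiability of $\sigma^s_\mu$: along a trajectory, $\|a(\tau+h)-a(\tau)\|=\|\sigma^s_\mu(b(\tau+h))-\sigma^s_\mu(b(\tau))\|\le (1/M)\|b(\tau+h)-b(\tau)\|$, so $\|\dot a\|\le(1/M)\|\dot b\|$, which the ODE converts into (ii).

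Next I would compute, on $W^s\cap N$,
\begin{equation*}
\frac{dL_s}{d\tau} \;=\; 2\,\sigma^s_\mu(b)^T f^a(\sigma^s_\mu(b),b;\mu) \;+\; 2\, b^T f^b(\sigma^s_\mu(b),b;\mu),
\end{equation*}
and bound each term with all suprema taken over $N\times K$. For the second term, since $f^b(0,0;\mu)=0$, writing $f^b(a,b;\mu)=\int_0^1\{\partial_a f^b(sa,sb;\mu)\,a + \partial_b f^b(sa,sb;\mu)\,b\}\,ds$ and combining the quadratic-form bound $b^T(\partial_b f^b)b\le l(\partial_b f^b)\|b\|^2$ from Lemma \ref{lem-log} with $|b^T(\partial_a f^b)a|\le \|\partial_a f^b\|\|b\|\|a\|$ and (i) gives $2\,b^T f^b \le 2\bigl(\tfrac{1}{M}\|\partial_a f^b\|+l(\partial_b f^b)\bigr)\|b\|^2$. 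For the first term, (i), (ii) and the straight-line bound $\|f^b(a,b;\mu)\|\le m(\partial_b f^b)\|b\|+\|\partial_a f^b\|\|a\|$, followed by (i) once more, yield
\begin{equation*}
2\,|\sigma^s_\mu(b)^T f^a| \;\le\; \tfrac{2\|b\|}{M^2}\,\|f^b\| \;\le\; 2\left(\tfrac{1}{M^2}\,m(\partial_b f^b) + \tfrac{1}{M^3}\,\|\partial_a f^b\|\right)\|b\|^2.
\end{equation*}

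Adding the two bounds and grouping $\tfrac{1}{M}+\tfrac{1}{M^3}=\tfrac{M^2+1}{M^3}$ yields $\frac{dL_s}{d\tau}\le 2\,\overrightarrow{\mu_{s,W^s}}\,\|b\|^2$, which is exactly the quantity appearing in (\ref{rate-stable-sm}). Since $\overrightarrow{\mu_{s,W^s}}<0$ by hypothesis, this is non-positive, and vanishes only when $\|b\|=0$; in that case $a=\sigma^s_\mu(0)=0$, so the point is $x_\ast(\mu)$, establishing both required properties of a Lyapunov function. The main subtlety is the merely Lipschitz regularity of $\sigma^s_\mu$, which blocks the naive chain rule $\dot a = D\sigma^s_\mu(b)\dot b$; the difference-quotient argument in (ii) sidesteps this while extracting precisely the factor $1/M$ coming from the cone width, and every constant involved is already computable from the data used to validate the $M$-cone condition, so nothing beyond Proposition \ref{prop-stable-manifold} is needed for rigorous numerical implementation.
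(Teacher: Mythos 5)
Your proof is correct and reaches the same estimate $\tfrac{dL_s}{d\tau}\le 2\,\overrightarrow{\mu_{s,W^s}}\|b\|^2$, but it takes a genuinely different (and arguably cleaner) route in one crucial step. The paper's proof differentiates $\|\sigma^s_\mu(b)\|^2$ via the chain rule, producing factors of $\partial\sigma^s_\mu/\partial b$ that are then bounded by $1/M$; this implicitly requires $\sigma^s_\mu$ to be differentiable, whereas Proposition \ref{prop-stable-manifold} only guarantees Lipschitz continuity. You bypass this by using the invariance identity $a(\tau)=\sigma^s_\mu(b(\tau))$ on $W^s$ to write $L_s$ along the trajectory as $\|a\|^2+\|b\|^2$ and differentiate via the ODE, $\tfrac{dL_s}{d\tau}=2\,a^T f^a+2\,b^T f^b$, never touching a derivative of $\sigma^s_\mu$. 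Your observation (ii), $\|f^a\|\le M^{-1}\|f^b\|$ on $W^s\cap N$, derived from the difference quotient $\|a(\tau+h)-a(\tau)\|=\|\sigma^s_\mu(b(\tau+h))-\sigma^s_\mu(b(\tau))\|\le M^{-1}\|b(\tau+h)-b(\tau)\|$, is exactly the right substitute for the paper's $\partial_b\sigma^s_\mu\cdot f^b$ factors and encodes the same $1/M$ cone geometry, but it uses only Lipschitz continuity and invariance. The subsequent fundamental-theorem-of-calculus expansion of $f^b$ (using $f^b(0,0;\mu)=0$ and convexity of $c_N(N)$ so the line segment stays in $N$), together with Lemma \ref{lem-log}, matches the paper's intermediate-point estimates and reproduces the exact constant $\overrightarrow{\mu_{s,W^s}}$. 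The only thing you stop short of is promoting $\tfrac{dL_s}{d\tau}\le 2\,\overrightarrow{\mu_{s,W^s}}\|b\|^2$ to the differential inequality $\tfrac{dL_s}{d\tau}\le \tfrac{2M^2}{M^2+1}\,\overrightarrow{\mu_{s,W^s}}\,L_s$ that the paper records and later uses for the $t_{\max}$ integrals; that follows immediately from $L_s\le (1+M^{-2})\|b\|^2$ and $\overrightarrow{\mu_{s,W^s}}<0$, so no real gap, but it is worth noting for downstream use. Net effect: same conclusion, same constants, but your route is rigorous under merely Lipschitz $\sigma^s_\mu$, which is a modest but real improvement in robustness over the paper's argument.
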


\begin{proof}
First note that $\|\sigma^s_\mu(b)\|\leq M^{-1}\|b\|$ holds for all $b\in \overline{{\bf B}_{n_s}}$, since $W^s(x_\ast(\mu))$ is contained in the stable $M$-cone $C^s_M(x_\ast; P_\mu)$.
Differentiating $L_s$ with respect to $\tau$ along trajectories $z(\tau;\mu) = (a(\tau), b(\tau), \mu) \in W^s(x_\ast(\mu))$ evaluated at $\tau = 0$, we have
\begin{align}
\notag
\frac{1}{2}\frac{dL_s}{d\tau}(z(\tau;\mu))|_{\tau = 0} &= \left\langle \frac{\partial \sigma^s_\mu}{\partial b}(b) \frac{db}{d\tau}, \sigma^s(b)\right\rangle + \left\langle \frac{db}{d\tau}, b\right\rangle = \left\langle \frac{\partial \sigma^s_\mu}{\partial b}(b) f^b(z), \sigma^s_\mu(b)\right\rangle + \left\langle f^b(z), b\right\rangle \\
\notag
	&= \left\langle \frac{\partial \sigma^s_\mu}{\partial b}(b) \left( \frac{\partial f^b}{\partial a}(z_b)\frac{\partial \sigma^s_\mu}{\partial b}(b_{\xi})b + \frac{\partial f^b}{\partial b}(z_b)b\right), \sigma^s_\mu(b)\right\rangle \\
\notag
	&\quad\quad+ \left\langle \left( \frac{\partial f^b}{\partial a}(z_b)\frac{\partial \sigma^s_\mu}{\partial b}(b_{\xi})b + \frac{\partial f^b}{\partial b}(z_b)b \right), b\right\rangle \\
\notag
	&= \left\langle \frac{\partial \sigma^s_\mu}{\partial b}(b) \left( \frac{\partial f^b}{\partial a}(z_b)\frac{\partial \sigma^s_\mu}{\partial b}(b_{\xi})b + \frac{\partial f^b}{\partial b}(z_b)b\right), \frac{\partial \sigma^s_\mu}{\partial b}(b_{\xi}')b \right\rangle \\
\notag
	&\quad\quad+ \left\langle \left( \frac{\partial f^b}{\partial a}(z_b)\frac{\partial \sigma^s_\mu}{\partial b}(b_{\xi})b + \frac{\partial f^b}{\partial b}(z_b)b \right), b\right\rangle \\
\notag
	&\leq \frac{1}{M^3} \left\|\frac{\partial f^b}{\partial a}\right\| \|b\|^2 + \frac{1}{M^2}m\left(\frac{\partial f^b}{\partial b}\right) \|b\|^2\\
\notag
	&\quad\quad + \frac{1}{M} \left\|\frac{\partial f^b}{\partial a}\right\| \|b\|^2 + l\left(\frac{\partial f^b}{\partial b}\right)\|b\|^2\\
\notag
	&= \left[ \frac{M^2+1}{M^3} \left\|\frac{\partial f^b}{\partial a}\right\| + \frac{1}{M^2}m\left(\frac{\partial f^b}{\partial b}\right) + l\left(\frac{\partial f^b}{\partial b}\right) \right] \|b\|^2\\
\label{Lyapunov-stable-1}
	&\equiv \overrightarrow{\mu_{s, W^s}} \|b\|^2 \leq \overrightarrow{\mu_{s, W^s}} (\lambda \|b\|^2 + (1-\lambda)M^2\|\sigma^s_\mu(b)\|^2 )
\end{align}
holds for all $\lambda \in [0,1]$, where $z_b\in c_N(N)\times K$ and $b_\xi, b_{\xi'}\in \overline{{\bf B}_{n_s}}$.
The last inequality follows from $\overrightarrow{\mu_{s, W^s}} < 0$ and $\|\sigma^s_\mu(b)\|\leq M^{-1}\|b\|$.
Now set $\lambda = M^2/(M^2+1) < 1$ so that $\lambda = (1-\lambda)M^2$, in which case we have
\begin{equation*}
\frac{1}{2}\frac{dL_s}{d\tau}(z(\tau;\mu))|_{\tau = 0} \leq \frac{M^2}{M^2+1}\overrightarrow{\mu_{s, W^s}} L_s(z)
\end{equation*}
and hence $L_s(z)$ is a Lyapunov function on $W^s(x_\ast)\cap N$ for all $\mu\in K$.
\end{proof}

\begin{rem}
By the cone condition, $\overrightarrow{\mu_{s}} < 0$ are already assumed, which indicates that
\begin{equation*}
\frac{M^2+1}{M^3} \left\|\frac{\partial f^b}{\partial a}(z)\right\| + l\left(\frac{\partial f^b}{\partial b}(z)\right) < 0
\end{equation*}
is mostly satisfied for large $M$ and $z\in N\times K$, since $\frac{M^2+1}{M^3} \sim M^{-1}$.
Note that, by the Stable Manifold Theorem, $M$ can be chosen sufficiently large if $z\in W^s(x_\ast(\mu))$ is sufficiently close to $x_\ast(\mu)$.
The assumption $\overrightarrow{\mu_{s,W^s}} < 0$ takes the slope of stable manifolds in $N$ into account.
In fact, if $W^s(x_\ast(\mu))$ is given by $\{a = \sigma^s_\mu(b) \equiv 0\}$, then $M$ can be chosen arbitrarily large, and hence $\overrightarrow{\mu_{s,W^s}} < 0$ is automatically satisfied. 
\end{rem}

As a special case, we consider the following form of vector fields in a vicinity of an equilibrium:
\begin{equation}
\label{near-diagonal}
a' = Aa + \tilde f^a(a,b;\mu),\quad b' = Bb + \tilde f^b(a,b;\mu),
\end{equation}
where $A$ and $B$ are squared matrices such that
\begin{equation*}
{\rm Re}\lambda \geq \lambda_A,\quad \forall \lambda\in {\rm Spec}(A),\quad 
{\rm Re}\mu \leq \mu_B,\quad \forall \mu\in {\rm Spec}(B)
\end{equation*}
hold for some $\lambda_A > 0$ and $\mu_B < 0$.
Then
\begin{align}
\notag
\frac{1}{2}\frac{dL_s}{d\tau}(z(\tau;\mu))|_{\tau = 0} &= \left\langle \frac{\partial \sigma^s_\mu}{\partial b}(b) \frac{db}{d\tau}, \sigma^s_\mu(b)\right\rangle + \left\langle \frac{db}{d\tau}, b\right\rangle\\
\notag
	&= \left\langle \frac{\partial \sigma^s_\mu}{\partial b}(b) (Bb + \tilde f^b(z)), \sigma^s_\mu(b)\right\rangle + \left\langle Bb + \tilde f^b(z), b\right\rangle \\
\notag
	&= \left\langle \frac{\partial \sigma^s}{\partial b}(b) \left( Bb + \frac{\partial \tilde f^b}{\partial a}(z_b)\frac{\partial \sigma^s_\mu}{\partial b}(b_{\xi})b + \frac{\partial \tilde f^b}{\partial b}(z_b)b\right), \sigma^s_\mu(b)\right\rangle \\
\notag
	&\quad\quad+ \left\langle \left( Bb + \frac{\partial \tilde f^b}{\partial a}(z_b)\frac{\partial \sigma^s_\mu}{\partial b}(b_{\xi})b + \frac{\partial \tilde f^b}{\partial b}(z_b)b \right), b\right\rangle \\
\notag
	&= \left\langle \frac{\partial \sigma^s_\mu}{\partial b}(b) \left( Bb+\frac{\partial \tilde f^b}{\partial a}(z_b)\frac{\partial \sigma^s_\mu}{\partial b}(b_{\xi})b + \frac{\partial \tilde f^b}{\partial b}(z_b)b\right), \frac{\partial \sigma^s_\mu}{\partial b}(b_{\xi}')b \right\rangle \\
\notag
	&\quad\quad+ \left\langle \left( Bb+\frac{\partial \tilde f^b}{\partial a}(z_b)\frac{\partial \sigma^s_\mu}{\partial b}(b_{\xi})b + \frac{\partial \tilde f^b}{\partial b}(z_b)b \right), b\right\rangle \\
\notag
	&\leq \frac{1}{M^3} \left\|\frac{\partial \tilde f^b}{\partial a}\right\| \|b\|^2 + \frac{1}{M^2}m\left(B+\frac{\partial \tilde f^b}{\partial b}\right) \|b\|^2\\
\notag
	&\quad\quad + \frac{1}{M} \left\|\frac{\partial \tilde f^b}{\partial a}\right\| \|b\|^2 + \left\{ \mu_B + l\left(\frac{\partial \tilde f^b}{\partial b}\right) \right\} \|b\|^2\\
\notag
	&= \left[ \frac{M^2+1}{M^3} \left\|\frac{\partial \tilde f^b}{\partial a}\right\| + \frac{1}{M^2}m\left(B+\frac{\partial \tilde f^b}{\partial b}\right) +\left\{\mu_B + l\left(\frac{\partial \tilde f^b}{\partial b}\right) \right\} \right] \|b\|^2\\
\notag
	&\leq \left[ \frac{M^2+1}{M^3} \left\|\frac{\partial \tilde f^b}{\partial a}\right\| + \frac{1}{M^2}m\left(\frac{\partial \tilde f^b}{\partial b}\right) + l\left(\frac{\partial \tilde f^b}{\partial b}\right) + \frac{M^2+1}{M^2}\mu_B \right] \|b\|^2\\
\label{Lyapunov-stable-2}
	&\equiv \overrightarrow{\mu_{s, W^s}} \|b\|^2 \leq \overrightarrow{\mu_{s, W^s}} (\lambda \|b\|^2 + (1-\lambda)M^2\|\sigma^s_\mu(b)\|^2 )
\end{align}
holds for all $\lambda \in [0,1]$.
The last inequality follows from $\overrightarrow{\mu_{s, W^s}} < 0$ and $\|\sigma^s(b)\|\leq M^{-1}\|b\|$.
Now set $\lambda = M^2/(M^2+1) < 1$ so that $\lambda = (1-\lambda)M^2$, in which case we have
\begin{equation*}
\frac{1}{2}\frac{dL_s}{d\tau}(z(\tau;\mu))|_{\tau = 0} \leq \frac{M^2}{M^2+1}\overrightarrow{\mu_{s, W^s}} L_s(z).
\end{equation*}

We can construct Lyapunov functions on unstable manifolds in the similar way, which is stated as follows.

\begin{prop}[Lyapunov functions on unstable manifolds]
\label{prop-Lyapunov-um}
Let $x_\ast\in \mathbb{R}^n$ be an equilibrium for a smooth vector field (\ref{ODE-coord}).
Let $N$ be a $n$-dimensional $h$-set containing $x_\ast$ such that its coordinate representation $c_N(N) = \overline{{\bf B}_{n_u}} \times \overline{{\bf B}_{n_s}}$ is characterized by the coordinate $(a,b)$ in $\mathbb{R}^{n_u+n_s}$.
Assume that $(f^a,f^b)^T$ satisfies the $M$-cone condition on $N$.
Further assume that
\begin{equation}
\label{rate-stable-um}
\overrightarrow{\xi_{u, W^u}} := \inf_{z\in N\times K}\left\{ m_l\left(\frac{\partial f^a}{\partial a}\right) - \frac{M^2+1}{M^3} \left\|\frac{\partial f^a}{\partial b}\right\| - \frac{1}{M^2}m\left(\frac{\partial f^a}{\partial a}\right)  \right\} > 0.
\end{equation}
Then
\begin{equation}
\label{Lyapunov-um}
L_u(a,b;\mu) := -\left(\|a\|^2 + \|\sigma^u_\mu(a)\|^2 \right) 
\end{equation}
is a Lyapunov function for (\ref{ODE-coord}) on $W^u(x_\ast(\mu))\cap N$, where $\sigma^u_\mu$ is the Lipschitz function such that $W^u(x_\ast(\mu))\cap N = \{(a, \sigma^u_\mu(a))\mid a\in \overline{{\bf B}_{n_u}}\}$.
See Proposition \ref{prop-stable-manifold}.
\end{prop}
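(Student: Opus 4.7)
The plan is to mirror the proof of Proposition \ref{prop-Lyapunov-sm} essentially line-by-line, swapping the roles of the stable and unstable coordinates. Since $W^u(x_\ast(\mu))\cap N$ is contained in the unstable $M$-cone $C^u_M(x_\ast; P_\mu)$, Proposition \ref{prop-stable-manifold} gives the a-priori bound $\|\sigma^u_\mu(a)\| \leq M^{-1}\|a\|$ for all $a\in \overline{{\bf B}_{n_u}}$, together with the Lipschitz bound $\|\partial \sigma^u_\mu/\partial a\| \leq M^{-1}$. These two estimates are the analogues used in the stable case and are the only structural inputs needed from the cone condition.

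First I would parameterize trajectories in $W^u(x_\ast(\mu))\cap N$ by $(a(\tau), \sigma^u_\mu(a(\tau)))$, so that $\dot a = f^a(a, \sigma^u_\mu(a))$. Differentiating the definition of $L_u$ gives
\begin{equation*}
-\frac{1}{2}\frac{dL_u}{d\tau}(z(\tau;\mu))\Big|_{\tau = 0} = \left\langle f^a(z), a\right\rangle + \left\langle \frac{\partial \sigma^u_\mu}{\partial a}(a)\, f^a(z), \sigma^u_\mu(a)\right\rangle.
\end{equation*}
Using that $f^a(0,0)=0$, the mean value theorem lets me write $f^a(z) = \frac{\partial f^a}{\partial a}(z_a)\, a + \frac{\partial f^a}{\partial b}(z_a)\, \sigma^u_\mu(a)$ for some intermediate points $z_a$, just as in the proof of Proposition \ref{prop-Lyapunov-sm}.

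The estimate then proceeds term by term. The diagonal piece is bounded below using $m_l$: $\langle \frac{\partial f^a}{\partial a}(z_a)\, a, a\rangle \geq m_l(\frac{\partial f^a}{\partial a})\|a\|^2$. The off-diagonal piece is controlled via the operator norm and the cone bound $\|\sigma^u_\mu(a)\|\leq M^{-1}\|a\|$, yielding $|\langle \frac{\partial f^a}{\partial b}(z_a)\sigma^u_\mu(a), a\rangle| \leq \frac{1}{M}\|\frac{\partial f^a}{\partial b}\|\,\|a\|^2$. The second inner product in the display above is dominated using $\|\partial \sigma^u_\mu/\partial a\|\leq M^{-1}$, $\|\sigma^u_\mu(a)\|\leq M^{-1}\|a\|$, and $\|f^a(z)\| \leq m(\frac{\partial f^a}{\partial a})\|a\| + \frac{1}{M}\|\frac{\partial f^a}{\partial b}\|\,\|a\|$, giving a contribution bounded by $\left[\frac{1}{M^2}m(\frac{\partial f^a}{\partial a}) + \frac{1}{M^3}\|\frac{\partial f^a}{\partial b}\|\right]\|a\|^2$. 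Collecting all terms,
\begin{equation*}
\frac{1}{2}\frac{dL_u}{d\tau}(z(\tau;\mu))\Big|_{\tau = 0} \leq -\left[ m_l\left(\frac{\partial f^a}{\partial a}\right) - \frac{M^2+1}{M^3}\left\|\frac{\partial f^a}{\partial b}\right\| - \frac{1}{M^2} m\left(\frac{\partial f^a}{\partial a}\right)\right]\|a\|^2 = -\overrightarrow{\xi_{u, W^u}}\,\|a\|^2.
\end{equation*}

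To finish, I would convert the $\|a\|^2$ bound into one in terms of $L_u$ itself using the same convex-combination trick as in Proposition \ref{prop-Lyapunov-sm}: because $\|\sigma^u_\mu(a)\|^2 \leq M^{-2}\|a\|^2$, for any $\lambda\in[0,1]$ we have $-L_u = \|a\|^2 + \|\sigma^u_\mu(a)\|^2 \leq \lambda\|a\|^2 + (1-\lambda)M^2\|\sigma^u_\mu(a)\|^2$ upon choosing $\lambda = M^2/(M^2+1)$, so that $\|a\|^2 \geq \frac{M^2}{M^2+1}(-L_u) > 0$ on $W^u(x_\ast(\mu))\cap N\setminus\{x_\ast(\mu)\}$. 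Combining with the previous inequality yields $\frac{dL_u}{d\tau} \leq \frac{2M^2}{M^2+1}\overrightarrow{\xi_{u, W^u}}\,L_u < 0$, establishing strict decrease along $W^u$-trajectories and vanishing only at $x_\ast(\mu)$. I do not anticipate a serious obstacle; the only subtle point is to keep careful track of signs, since $L_u\leq 0$ by construction and the rate $\overrightarrow{\xi_{u, W^u}}$ is positive rather than negative — the condition (\ref{rate-stable-um}) plays precisely the role that (\ref{rate-stable-sm}) plays in the stable case after this sign flip.
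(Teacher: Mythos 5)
Your proof is correct and mirrors the paper's own argument for this proposition essentially line by line: the paper works with $\tilde L_u = -L_u$ while you work directly with $L_u$, and your mean-value expansion keeps $\sigma^u_\mu(a)$ in place rather than further writing it as $\frac{\partial \sigma^u_\mu}{\partial a}(a_\xi)a$, but both produce identical term-by-term bounds, the same rate $\overrightarrow{\xi_{u,W^u}}$, and the same convex-combination conclusion. One small transcription slip: the displayed inequality ``$-L_u = \|a\|^2 + \|\sigma^u_\mu(a)\|^2 \leq \lambda\|a\|^2 + (1-\lambda)M^2\|\sigma^u_\mu(a)\|^2$'' should instead read ``$\|a\|^2 \geq \lambda\|a\|^2 + (1-\lambda)M^2\|\sigma^u_\mu(a)\|^2$'' (the comparison runs in the opposite direction and is against $\|a\|^2$, not $-L_u$), though the consequence you draw from it, $\|a\|^2 \geq \frac{M^2}{M^2+1}(-L_u)$, is exactly right and is what the paper uses.
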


\begin{proof}
First note that $\|\sigma^u_\mu(a)\|\leq M^{-1}\|a\|$ holds for all $a\in \overline{{\bf B}_{n_u}}$, since $W^u(x_\ast(\mu))$ is contained in the unstable $M$-cone
\begin{equation*}
C^u_M(x_\ast; P_\mu) = \{x = P_\mu(a,b)\in \mathbb{R}^n\mid \|a\| \geq M\|b\|\}\text{ with }x_\ast = P_\mu(0,0).
\end{equation*}
Consider the differential of $\tilde L_u\equiv -L_u$ along the solution trajectory $z = z(\tau;\mu)$, which is as follows:
\begin{align}
\notag
\frac{1}{2}\frac{d\tilde L_u}{d \tau}(z(\tilde \tau;\mu))|_{\tilde \tau = 0} &=  \left\langle \frac{da}{d \tau}, a\right\rangle + \left\langle \frac{\partial \sigma^u_\mu}{\partial a}(a) \frac{da}{d\tau}, \sigma^u_\mu(a)\right\rangle\\
\notag
	&= \left\langle f^a(z), a\right\rangle + \left\langle \frac{\partial \sigma^u_\mu}{\partial a}(a) f^a(z), \sigma^u_\mu(a)\right\rangle  \\
\notag
	&= \left\langle \left( \frac{\partial f^a}{\partial a}(z_a)a + \frac{\partial f^a}{\partial b}(z_a)\frac{\partial \sigma^u_\mu}{\partial a}(a_{\xi})a \right), a\right\rangle \\
\notag
	&\quad\quad + \left\langle \frac{\partial \sigma^u_\mu}{\partial a}(a) \left( \frac{\partial f^a}{\partial a}(z_a)a + \frac{\partial f^a}{\partial b}(z_a)\frac{\partial \sigma^u_\mu}{\partial a}(a_{\xi})a \right), \sigma^u_\mu(a)\right\rangle \\
\notag
	&= \left\langle \left( \frac{\partial f^a}{\partial a}(z_a)a + \frac{\partial f^a}{\partial b}(z_a)\frac{\partial \sigma^u_\mu}{\partial a}(a_{\xi})a \right), a\right\rangle \\
\notag
	&\quad\quad + \left\langle \frac{\partial \sigma^u_\mu}{\partial a}(a) \left( \frac{\partial f^a}{\partial a}(z_a)a + \frac{\partial f^a}{\partial b}(z_a)\frac{\partial \sigma^u_\mu}{\partial a}(a_{\xi})a \right), \frac{\partial \sigma^u_\mu}{\partial a}(a_\xi')a\right\rangle \\
\notag
	&\geq  m_l\left(\frac{\partial f^a}{\partial a}\right)\|a\|^2 - \frac{1}{M} \left\|\frac{\partial f^a}{\partial b}\right\| \|a\|^2\\
\notag
	&\quad\quad - \frac{1}{M^2}m\left(\frac{\partial f^a}{\partial a}\right) \|a\|^2 - \frac{1}{M^3} \left\|\frac{\partial f^a}{\partial b}\right\| \|a\|^2\\
\notag
	&= \left[ m_l\left(\frac{\partial f^a}{\partial a}\right) - \frac{M^2+1}{M^3} \left\|\frac{\partial f^a}{\partial b}\right\| - \frac{1}{M^2}m\left(\frac{\partial f^a}{\partial a}\right) \right] \|a\|^2\\
\label{Lyapunov-unstable-1}
	&\equiv \overrightarrow{\xi_{u, W^u}} \|a\|^2 \geq \overrightarrow{\xi_{u, W^u}} (\lambda \|a\|^2 + (1-\lambda)M^2\|\sigma^u_\mu(a)\|^2 )
\end{align}
holds for all $\lambda \in [0,1]$, where $z_a\in c_N(N)\times K$ and $a_\xi, a_{\xi'}\in \overline{{\bf B}_{n_u}}$.
The last inequality follows from $\overrightarrow{\xi_{u, W^u}} > 0$ and $\|\sigma^u_\mu(a)\|\leq M^{-1}\|a\|$.
Now set $\lambda = M^2/(M^2+1) < 1$ so that $\lambda = (1-\lambda)M^2$, in which case we have
\begin{equation*}
\frac{1}{2}\frac{d\tilde L_u}{d\tau}(z(\tilde \tau; \mu))|_{\tau = 0} \geq \frac{M^2}{M^2+1}\overrightarrow{\xi_{u, W^u}} \tilde L_u(z)
\end{equation*}
and hence $\tilde L_u(z)$ is a Lyapunov function on $W^u(x_\ast(\mu))\cap N$ for all $\mu\in K$.
Obviously $L_u$ also satisfies all requirements of Lyapunov functions.
\end{proof}


The corresponding estimate for the desingularized vector field of the form (\ref{near-diagonal}) is as follows:

\begin{align}
\notag
\frac{1}{2}\frac{d\tilde L_u}{d \tau}(z(\tilde \tau;\mu))|_{\tilde \tau = 0} &=  \left\langle \frac{da}{d \tau}, a\right\rangle + \left\langle \frac{\partial \sigma^u_\mu}{\partial a}(a) \frac{da}{d\tau}, \sigma^u_\mu(a)\right\rangle\\
\notag
	&= \left\langle Aa+ \tilde f^a(z), a\right\rangle + \left\langle \frac{\partial \sigma^u}{\partial a}(a) (Aa+\tilde f^a(z)), \sigma^u_\mu(a)\right\rangle  \\
\notag
	&= \left\langle \left( Aa+\frac{\partial \tilde f^a}{\partial a}(z_a)a + \frac{\partial \tilde f^a}{\partial b}(z_a)\frac{\partial \sigma^u_\mu}{\partial a}(a_{\xi})a \right), a\right\rangle \\
\notag
	&\quad\quad + \left\langle \frac{\partial \sigma^u_\mu}{\partial a}(a) \left( Aa+\frac{\partial \tilde f^a}{\partial a}(z_a)a + \frac{\partial \tilde f^a}{\partial b}(z_a)\frac{\partial \sigma^u_\mu}{\partial a}(a_{\xi})a \right), \sigma^u_\mu(a)\right\rangle \\
\notag
	&= \left\langle \left( Aa+\frac{\partial \tilde f^a}{\partial a}(z_a)a + \frac{\partial \tilde f^a}{\partial b}(z_a)\frac{\partial \sigma^u_\mu}{\partial a}(a_{\xi})a \right), a\right\rangle \\
\notag
	&\quad\quad + \left\langle \frac{\partial \sigma^u_\mu}{\partial a}(a) \left(Aa+ \frac{\partial \tilde f^a}{\partial a}(z_a)a + \frac{\partial \tilde f^a}{\partial b}(z_a)\frac{\partial \sigma^u_\mu}{\partial a}(a_{\xi})a \right), \frac{\partial \sigma^u_\mu}{\partial a}(a_\xi')a\right\rangle \\
\notag
	&\geq  \lambda_A \|a\|^2 - m\left(\frac{\partial \tilde f^a}{\partial a}\right)\|a\|^2 - \frac{1}{M} \left\|\frac{\partial \tilde f^a}{\partial b}\right\| \|a\|^2\\
\notag
	&\quad\quad - \frac{1}{M^2}m\left(\frac{\partial \tilde f^a}{\partial a}\right) \|a\|^2 - \frac{1}{M^3} \left\|\frac{\partial \tilde f^a}{\partial b}\right\| \|a\|^2\\
\notag
	&= \left[ \lambda_A - m\left(\frac{\partial \tilde f^a}{\partial a}\right) - \frac{M^2+1}{M^3} \left\|\frac{\partial \tilde f^a}{\partial b}\right\| - \frac{1}{M^2}m\left(A+\frac{\partial \tilde f^a}{\partial a}\right) \right] \|a\|^2\\
\label{Lyapunov-unstable-2}
	&\equiv \overrightarrow{\xi_{u, W^u}} \|a\|^2 \geq \overrightarrow{\xi_{u, W^u}} (\lambda \|a\|^2 + (1-\lambda)M^2\|\sigma^u_\mu(a)\|^2 ).
\end{align}

\subsubsection{Validation procedure of trajectories with finite-time singularities}

Now we are ready to validate trajectories with finite-time singularity.
Consider (\ref{ODE}) admitting the desingularized vector field (\ref{ODE-desing}) under the time-scale desingularization (\ref{time-desing}).

Main target is the trajectory through a point $x_\ast$ under (\ref{ODE}).
We {\em assume} that $x_\ast$ is an equilibrium of (\ref{ODE-desing}), but {\em do not assume} that it is an equilibrium of (\ref{ODE}).
The following algorithm is a topological validation.

\begin{alg}[Details of Algorithm \ref{alg-asym}]
\label{alg-finite}
Consider (\ref{ODE}) which has a degeneracy-inducing point $x_\ast$.
Let $K\subset \mathbb{R}^k$ be a compact set homeomorphic to $\overline{{\bf B}_{\tilde k}}$ with $\dim K \equiv \tilde k \leq k$. 
\begin{enumerate}
\item Validate $x_\ast = x_\ast(\mu)$ as a $\mu\in K\subset \mathbb{R}^k$-parameterized family of  equilibria for the desingularized system (\ref{ODE-desing}) in terms of isolating blocks $N\subset \mathbb{R}^n$ satisfying $M$-cone conditions.
If possible, validate that $x_\ast$ is a hyperbolic equilibrium for (\ref{ODE-desing}).
\item For given initial $h$-set $N_0$ in $\mathbb{R}^n\times \mathbb{R}^k$, construct a sequence of covering relations\footnote{
$h$-sets $N_i$ ($i=0,\cdots, m$) are chosen so that $\pi_k N_i = K$, where $\pi_k : \mathbb{R}^n\times \mathbb{R}^k \to \mathbb{R}^k$ denotes the orthogonal projection onto parameter variables.
}
\begin{equation}
\label{verify-cov}
%
N_0\overset{\Phi^{des}(\tau_0, \cdot)}{\Longrightarrow}N_1\overset{\Phi^{des}(\tau_1, \cdot)}{\Longrightarrow}N_2 \overset{\Phi^{des}(\tau_2, \cdot)}{\Longrightarrow} \cdots \overset{\Phi^{des}(\tau_{m-1}, \cdot)}{\Longrightarrow}N_m \overset{\Phi^{des}(\tau_m, \cdot)}{\Longrightarrow}N\times K
\end{equation}
for some $\tau_0,\cdots, \tau_m > 0$, where $\Phi^{des}$ denotes the parameterized flow $\Phi^{des} : \mathbb{R}\times \mathbb{R}^n\times \mathbb{R}^k \to \mathbb{R}^n\times \mathbb{R}^k$ given as $\Phi^{des}(t,x,\mu) = (\varphi^{des}_\mu(t,x), \mu)$.
This procedure shows the existence of a point $(x_0,\mu_0)\in N_0$ such that $\varphi^{des}_{\mu_0}(\bar \tau, x_0) \in W^s(x_\ast; \varphi^{des}_{\mu_0})\cap N$, where $\bar \tau = \sum_{j=0}^m \tau_j$.
Let $x(\tau)$ be the trajectory satisfying $x(0) = x_0$.
Verify whether $T(x)$ has an identical sign along the trajectory $\{x(\tau)\mid \tau\in [0,\bar \tau]\}$.
\item Check if $T(x)$ has an identical signature for all points in $(W^s(x_\ast; \varphi^{des}_{\mu_0})\cap N) \setminus \{x_\ast\}$.
See also Remark \ref{rem-cov-block}.
\item Compute 
\begin{equation*}
t_{\max} = \int_0^\infty \frac{d\tau}{T(x(\tau))}
\end{equation*}
around the degeneracy-inducing point, where $\tau = 0$ is supposed to correspond to $t=0$.
\end{enumerate}
If the validated trajectory involves the unstable manifold of $x_\ast$, then replace \lq\lq $W^s(x_\ast; \varphi^{des}_{\mu_0})$" in Step 3 by \lq\lq $W^u(x_\ast; \varphi^{des}_{\mu_0})$", and \lq\lq $t_{\max} = \int_0^\infty \frac{d\tau}{T(x(\tau))}$" by
\begin{equation*}
t_{\min} = -\int_{-\infty}^0 \frac{d\tau}{T(x(\tau))},
\end{equation*}
respectively.
\end{alg}
\begin{thm}
\label{thm-existence-rigorous}
Suppose that all operations in Algorithm \ref{alg-finite} are succeeded and that $t_{\max} < \infty$.
Then there is a solution ${\bf x}(t; {\bf x}_0)$ of (\ref{ODE}) with ${\bf x}(0;{\bf x}_0)= {\bf x}_0 = (x_0, \mu_0)\in N_0$ such that ${\bf x}(t_{\max}) = {\bf x}_\ast$.
Namely, the trajectory $\{x(t;x_0(\mu_0))\}_{t\geq 0}$ goes through $x_\ast(\mu_0)$ at $t = t_{\max}(\mu_0)$.
\end{thm}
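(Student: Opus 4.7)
The plan is to combine the three ingredients supplied by Algorithm \ref{alg-finite} in sequence: the covering-relation chain produces an orbit in the desingularized time, the Lyapunov function on the stable manifold forces convergence to $x_\ast$, and the sign condition on $T(x)$ lets us pull the desingularized trajectory back to the original time scale.

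First, I would apply Proposition \ref{WZ-heteroclinic} to the chain (\ref{verify-cov}) in Step 2 of Algorithm \ref{alg-finite}. Under the $M$-cone condition validated in Step 1, Proposition \ref{prop-stable-manifold} represents $W^s(x_\ast;\varphi^{des}_{\mu_0})\cap N$ as the graph of a Lipschitz function $\sigma^s_{\mu_0}$, which is a vertical disk in $N$. Proposition \ref{WZ-heteroclinic} then yields ${\bf x}_0=(x_0,\mu_0)\in N_0$ such that $\Phi^{des}(\bar\tau,{\bf x}_0)\in W^s(x_\ast;\varphi^{des}_{\mu_0})\cap N$ with $\bar\tau=\sum_{j=0}^{m}\tau_j$. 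Forward invariance of the stable manifold keeps the trajectory inside $W^s(x_\ast;\varphi^{des}_{\mu_0})\cap N$ for all $\tau\geq\bar\tau$, and Proposition \ref{prop-Lyapunov-sm} (via (\ref{Lyapunov-stable-1})) supplies a Lyapunov function on this intersection whose differential inequality gives $\varphi^{des}_{\mu_0}(\tau,x_0)\to x_\ast$ exponentially as $\tau\to\infty$.

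Second, I would convert back to the original time. By Steps 2--3 of Algorithm \ref{alg-finite}, $T$ has a fixed sign — say $T>0$ after possibly reversing — along the whole validated orbit and on $W^s(x_\ast;\varphi^{des}_{\mu_0})\cap N\setminus\{x_\ast\}$. Define
\begin{equation*}
t(\tau)=\int_0^\tau \frac{d\tau'}{T(\varphi^{des}_{\mu_0}(\tau',x_0))}.
\end{equation*}
Then $t$ is strictly increasing and $C^1$, with $\lim_{\tau\to\infty}t(\tau)=t_{\max}<\infty$ by hypothesis. Its inverse $\tau(t):[0,t_{\max})\to[0,\infty)$ is well defined and $C^1$, and setting ${\bf x}(t;{\bf x}_0)=\varphi^{des}_{\mu_0}(\tau(t),x_0)$ together with the chain rule and (\ref{ODE-desing}) gives ${\bf x}'(t)=T(\bf x)\cdot T(\bf x)^{-1}f(\bf x,\mu_0)=f(\bf x,\mu_0)$, so ${\bf x}(t;{\bf x}_0)$ is a solution of (\ref{ODE}) on $[0,t_{\max})$. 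Continuity of $\varphi^{des}_{\mu_0}$ and Stage 1's limit let us extend by ${\bf x}(t_{\max};{\bf x}_0):=x_\ast$.

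The main obstacle is the last step: confirming that the pulled-back curve is genuinely a solution of the possibly non-smooth original system up to and including $t=t_{\max}$, precisely because the time change degenerates at $x_\ast$ where $T$ vanishes. The identical-sign check in Algorithm \ref{alg-finite} is exactly what prevents $t(\tau)$ from losing monotonicity (which would otherwise fold the original orbit back on itself near $x_\ast$), and the hyperbolicity of $x_\ast$ for the desingularized flow guarantees that the exponential decay rate from $L_s$ is strong enough relative to the vanishing order of $T$ for the improper integral to make sense — so the finiteness of $t_{\max}$ in the hypothesis is the only analytic input required. With these in hand, continuity of ${\bf x}(\cdot;{\bf x}_0)$ at $t_{\max}$ follows from $\varphi^{des}_{\mu_0}(\tau,x_0)\to x_\ast$, and the proof is complete.
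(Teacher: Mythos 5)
Your argument is correct and follows essentially the same route as the paper's proof: invoke Proposition \ref{WZ-heteroclinic} on the covering chain to land a point on $W^s(x_\ast;\varphi^{des}_{\mu_0})$, use the validated sign of $T$ to justify the orbital equivalence between (\ref{ODE}) and (\ref{ODE-desing}) along the trajectory, and read off the arrival time from the definition of $t_{\max}$ via (\ref{time-desing}). You spell out the time reparameterization $t(\tau)$ and the chain-rule check explicitly, which the paper compresses into the phrase ``orbitally equivalent,'' but the logical content is identical.
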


\begin{proof}
Success of arguments in Algorithm \ref{alg-finite} yields the following consequence from Proposition \ref{WZ-heteroclinic}:
there is a solution $x(\tau; x_0(\mu_0))$ for (\ref{ODE-desing}) such that $x(\tau; x_0)\to x_\ast$.
Since $x(\bar \tau) \in N$, then Steps 2 and 3 in Algorithm \ref{alg-finite} indicate that $T(x)$ has an identical sign along $\{x(\tau; x_0)\mid \tau\in [0,\infty)\}$.
This condition shows that (\ref{ODE}) and (\ref{ODE-desing}) are orbitally equivalent along $\{x(\tau; x_0)\mid \tau\in [0,\infty)\}$, and hence the solution $x(t;x_0)$ in the original $t$-timescale also goes to $x_\ast$ whose arrival time is give by $t=t_{\max}$ by the definition of (\ref{time-desing}).
\end{proof}

Behavior of $x(t;x_0(\mu_0))$ {\em after $t=t_{\max}$} depends on correspondence of the trajectory to that in the original problem.


\begin{rem}
\label{rem-cov-block}
A typical choice of $N_0$ is as follows for validating connecting orbits for (\ref{ODE-desing}).
Let $\tilde N$ an isolating block for $\Phi^{des}$ homeomorphic to $\subset \mathbb{R}^{n_u+n_u}\times \mathbb{R}^k$ such that a parameter family of equilibria $\{\tilde {\bf x}_\ast\} = \{\tilde {\bf x}_\ast(\mu)\}_{\mu\in K}$ is contained and that the cone condition is satisfied.
Recall that $n_u$ is the number of eigenvalues of $f^a$; namely the number of eigenvalues of the Jacobian matrix with positive real part, and for each $\mu\in K$, $n_u$ determines the dimension of $W^u(\tilde {\bf x}_\ast(\mu); \varphi^{des}_\mu)$ in $\tilde N$.
Now we assume that there is a strong deformation retraction $r$ of $\tilde N$ such that the set $W^u(\{\tilde {\bf x}_\ast\}; \Phi^{des}) \cap r(\tilde N)$ is a strong deformation retract of $W^u(\{\tilde {\bf x}_\ast\}; \Phi^{des}) \cap \tilde N$ and that $W^u(\{\tilde {\bf x}_\ast\}; \Phi^{des}) \cap r(\tilde N)$ is homeomorphic to $\overline{{\bf B}_u} \times \overline{{\bf B}_{\tilde u}}$, where $u$ is the expanding dimension in (\ref{verify-cov}) and $\tilde u \geq 0$.
In this case, the standard consequence of covering relations shows the existence of a point in $W^u(\{\tilde {\bf x}_\ast\}; \Phi^{des}) \cap r(\tilde N)$ traveling all $h$-sets via covering relations (\ref{verify-cov}).
So, we choose $N_0 := r(\tilde N)$.
\par
An example of such a retract is {\em the exit} of $\tilde N$ in a certain direction. 
It is well-known that the exit $\tilde N^{\exit}$ of an isolating block $\tilde N$ for $\Phi^{des}$ is a strong deformation retract of $\tilde N$ itself (e.g., \cite{Smo}), where the intersection $W^u(\{\tilde {\bf x}_\ast\}; \Phi^{des})\cap \tilde N^{\exit}$ is also a strong deformation retract of $W^u(\{\tilde {\bf x}_\ast\}; \Phi^{des})\cap \tilde N$ since $W^u(\{\tilde {\bf x}_\ast\}; \Phi^{des})$ is a horizontal disk (Proposition \ref{prop-stable-manifold}).
Moreover, by assumptions of $\tilde N$, the horizontal disk $W^u(\{\tilde {\bf x}_\ast\}; \Phi^{des})$ is homeomophic to $\overline{{\bf B}_{n_u}} \times \overline{{\bf B}_{\dim K}}$. 
Therefore, the specification of the exit induces a strong deformation retract of $\overline{{\bf B}_{n_u}} \times \overline{{\bf B}_{\dim K}}$ to $\{a_0\} \times \overline{{\bf B}_{\dim K}}$, where $a_0\in \partial {\bf B}_{n_u}$\footnote{
In this case, the exit $N_0$ is homeomorphic to $\overline{{\bf B}_{n_s}} \times \overline{{\bf B}_{\dim K}}$, and $W^u(\{\tilde {\bf x}_\ast\}; \Phi^{des}) \cap N_0$ is homeomorphic to $\overline{{\bf B}_{\dim K}}$. 
Moreover, $\dim K = u + \tilde u$ holds.
}.
\end{rem}

\subsubsection{Details 1 : Validation of sign of $T(x)$ along trajectories. Case study in $2$-dimensional dynamical systems}

Step 3 in Algorithm \ref{alg-finite} involves the verification of orbital equivalence between (\ref{ODE}) and (\ref{ODE-desing}) along validated trajectories.
The condition is concerned with $W^s(x_\ast; \varphi^{des}_\mu)$ and hence we need to determine the location of $W^s(x_\ast; \varphi^{des}_\mu)$, at least, near $x_\ast$.
Our stable manifold validation is done by cone conditions as well as a change of coordinates, which is mainly calculated by eigenvectors of the linearized matrix of vector field at $x_\ast$.
The eigenvectors as well as the property of cones completely detect the enclosure of location of stable manifolds.
Cone condition indicates that the set $(W^s(x_\ast;\varphi^{des}_\mu)\cap N)\setminus \{x_\ast\}$ is completely included in the stable $M$-cone $C^s_M(x_\ast; P)\equiv \{x = P(a,b)\in N\mid M\|a\| \leq \|b\|\}$ in (\ref{ODE-coord}) with the coordinate $x = P(a,b)$ (e.g., \cite{CZ2015, Mat}), where $0\in \mathbb{R}^{n_u + n_s}$ denotes the representation of $x_\ast$ in $(a,b)$-coordinate.
Therefore, in $(a,b)$-coordinate, Step 3 in Algorithm \ref{alg-finite} is reduced to the following verifiable criterion:
\begin{itemize}
\item Check if, in $(a,b)$-coordinate, $T(x)\equiv T(a,b)$ has an identical signature in $C^s_M(x_\ast; P)\cap N$.
\end{itemize}
{\em The Stable Manifold Theorem} shows that the stable manifold $W^s(x_\ast; \varphi^{des}_\mu)$ is tangent to the eigenspace generated by eigenvectors  $V_s$ associated with eigenvalues with negative real part at $x_\ast$.
It makes sense for (\ref{ODE-desing}) since it is sufficiently smooth including $x_\ast$.
Therefore, eigenvectors $V_s$ locally determines the direction of the stable manifold $W^s(x_\ast;\varphi^{des}_\mu)\cap N$.
Schematic illustration of the present argument is shown in Figure \ref{fig_positive}.

\begin{figure}[htbp]
  \begin{center}
    \includegraphics[width=5cm]{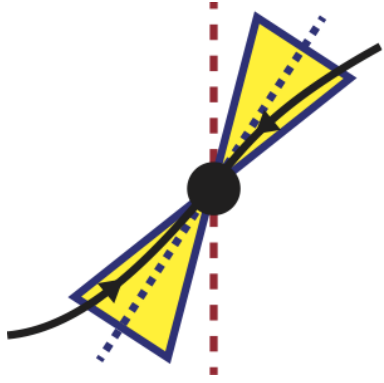}
    \caption{Location of stable manifold of $x_\ast$ for dynamics in $\mathbb{R}^2$}
    \label{fig_positive}
\end{center}
The black ball denotes $x_\ast$, which is supposed to be hyperbolic. 
The black curve denotes $W^s(x_\ast; \varphi^{des}_\mu)$.
The blue dotted line denotes an eigenvector associated with an eigenvalue with negative real part of the linearized matrix of vector field at $x_\ast$, which is tangent to $W^s(x_\ast; \varphi^{des}_\mu)$ at $x_\ast$.
The yellow set denotes a stable cone with the vertex $x_\ast$.
The red dotted line describes the null set $Null(T) \equiv \{T(x) = 0\}$ of $T$ realizing the time-scale desingularization (\ref{time-desing}). 
In both sides $T$ is supposed to have an identical sign.
In this situation, the function $T$ has an identical sign on each component of $W^s(x_\ast;\varphi^{des}_\mu)\setminus \{x_\ast\}$ if the stable cone except the vertex $x_\ast$ is disjoint from $Null(T)$, which is determined by the direction of eigenvector and the slope of stable cone.
\end{figure}

\par
As a case study, we consider (\ref{ODE}) as well as (\ref{ODE-desing}) in $\mathbb{R}^2$, where $x=(x_1,x_2)$.
Now we make the following assumption.
\begin{ass}
\label{ass-case-study}
The origin $x=(0,0)$ is a degeneracy-inducing point of (\ref{ODE}) such that it is a saddle for (\ref{ODE-desing}) and that
\begin{equation*}
T(x) = x_1^{-1}.
\end{equation*}
There is a trajectory $\{x(\tau)\}_{\tau \geq 0}$ for (\ref{ODE-desing}) included in $W^s(0;\varphi^{des}_\mu)$.
Our question here is {\em when $T(x)$ is positive along $\{x(\tau)\}_{\tau \geq 0}$}.
Further we assume that the following objects are validated, possibly with their explicit enclosures:
\begin{itemize}
\item Eigenvector $V_s$ at $x=0$ associated with the negative eigenvalue.
\item $M$-cone condition in $N$, where $N$ is an $h$-set being a neighborhood of $x=0$. 
\item $W^s(0;\varphi^{des}_\mu)$ as a vertical disk in $N$ for (\ref{ODE-desing}).
\item There is a positive $\tau = \bar \tau$ such that $x_1(\tau) > 0$ for all $0\leq \tau \leq \bar \tau$ and that $x(\bar \tau) \in W^s(0;\varphi^{des}_\mu)\cap N \cap \{x_1 > 0\}$.
\end{itemize}
Cone conditions with the above assumption indicates that $x(\tau)\in N$ holds for all $\tau \geq \bar \tau$.
\end{ass}

The stable eigenvector $V_s$ induces a rotation of the stable cone $C^s_M(0; I_2)$ whose the central axis is directed in $(0,\pm 1)^T$ in the $(a,b)$-coordinate\footnote{
In our validation, the stable component is always assumed to be in the second coordinate, as seen in (\ref{ODE-coord}).
}.
More precisely, $V_s = (v_s^1, v_s^2)^T$ is parallel to
\begin{equation}
\label{theta-bound}
\begin{pmatrix}
1 \\ \tan \theta
\end{pmatrix},\quad \text{ where }\theta = \arctan \left(\frac{v_s^2}{v_s^1}\right)\in [\underline{\theta}, \overline{\theta}] \subset [-\pi, \pi)
\end{equation}
obtained through direct calculations or rigorous numerics.
Any points in the stable cone $\{M|a| < |b|\}$ has an angle
\begin{equation}
\label{stable-cone-angle}
\begin{pmatrix}
\frac{1}{\tan \alpha} \\ 1
\end{pmatrix}, \text{ where }\alpha \in \pm \left[\arctan M, \pi - \arctan M\right]\text{ with }\arctan M \in \left(-\frac{\pi}{2}, \frac{\pi}{2}\right)
\end{equation}
so that $\sin \alpha \not = 0$ holds in the range,
equivalently
\begin{equation*}
\begin{pmatrix}
-\tan \beta \\ 1
\end{pmatrix}, \text{ where }\beta \in \pm \left[ \arctan M - \frac{\pi}{2}, \frac{\pi}{2} - \arctan M\right].
\end{equation*}
As a preconditioning operation, we further rotate the stable cone by $-\pi /2$ radian via
\begin{equation}
\label{precondition}
\begin{pmatrix}
\cos \left(-\frac{\pi}{2}\right) & -\sin \left(-\frac{\pi}{2}\right)\\
\sin \left(-\frac{\pi}{2}\right) & \cos \left(-\frac{\pi}{2}\right)
\end{pmatrix}
\begin{pmatrix}
-\tan \beta \\ 1
\end{pmatrix}
=
\begin{pmatrix}
0 & 1\\
-1 & 0
\end{pmatrix}
\begin{pmatrix}
-\tan \beta \\ 1
\end{pmatrix}
=
\begin{pmatrix}
1 \\ \tan \beta
\end{pmatrix}.
\end{equation}

Then the rotation matrix
\begin{equation*}
\begin{pmatrix}
\cos \theta & -\sin \theta\\
\sin \theta & \cos \theta
\end{pmatrix},\quad \theta \in [\underline{\theta}, \overline{\theta}] \text{ with } \left[-\frac{\pi}{2}, \frac{\pi}{2}\right) \cap [\underline{\theta}, \overline{\theta}] \not = \emptyset
\end{equation*}
such that the central direction of the stable cone equals to $V_s$ with $x_1 > 0$ gives the actual rotation range of 
points in the stable cone in the original $(x_1, x_2)$-coordinate which is actually given as
\begin{equation*}
\begin{pmatrix}
\cos \theta & -\sin \theta\\
\sin \theta & \cos \theta
\end{pmatrix}
\begin{pmatrix}
1 \\ \tan \beta
\end{pmatrix}
= 
\begin{pmatrix}
\cos \theta - \sin \theta \tan\beta \\ \sin \theta + \cos \theta \tan\beta
\end{pmatrix}.
\end{equation*}
Therefore, under the property of stable cone at the vertex $x = 0$, the remaining trajectory $\{x(\tau)\}$ satisfies $x_1>0$ if $\cos \theta - \sin \theta \tan\beta > 0$.
The schematic illustration of the present arguments is drawn in Figure \ref{fig-rotation}.
We finally obtain the following result.

\begin{figure}[htbp]
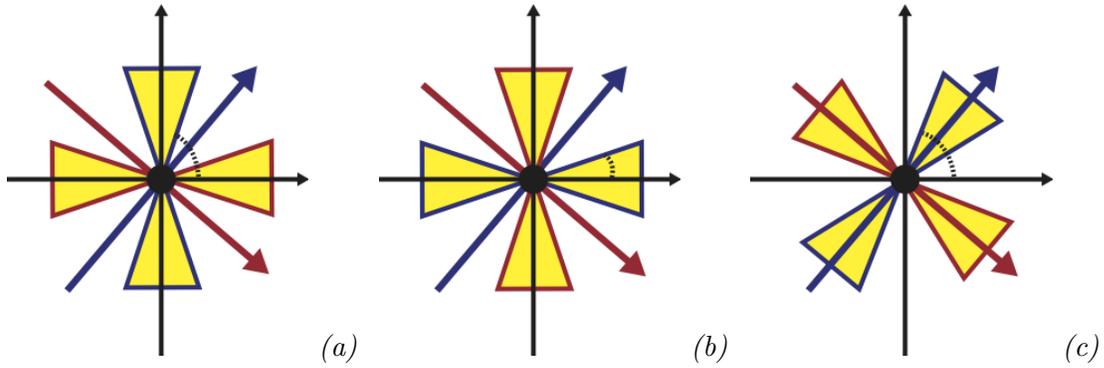
\em
\begin{minipage}{0.32\hsize}
\centering
\includegraphics[width=4cm]{figs/fig_eigenvector1.png}
(a)
\end{minipage}
\begin{minipage}{0.32\hsize}
\centering
\includegraphics[width=4cm]{figs/fig_eigenvector2.png}
(b)
\end{minipage}
\begin{minipage}{0.32\hsize}
\centering
\includegraphics[width=4cm]{figs/fig_eigenvector3.png}
(c)
\end{minipage}\\
\caption{Validation of $T(x) > 0$ in terms of cones and eigenvectors}
\label{fig-rotation}
In all figures herein, the red and blue arrows denote the unstable and stable eigenvectors $V_u$ and $V_s$ associated with the positive and negative eigenvalues at the origin, respectively, yellow regions denote the unstable (surrounded by red lines) and stable (surrounded by blue lines) cones, respectively, and the dotted curve denotes the angle of corresponding objects. 
(a) : Stable cone is located so that the central axis is equal to $b$-axis; the second coordinate in the diagonalized one.
(b) : Preconditioning rotation of the cone so that the central axis is equal to $a$-axis; the first coordinate in the diagonalized one.
(c) : The original cone location so that the central axis is equal to $V_s$.
\end{figure}

\begin{prop}[Validation of $T(x) > 0$ along $W^s(0;\varphi^{des}_\mu)$ : a case study]
Consider (\ref{ODE}) with Assumption \ref{ass-case-study}.
Assume that $\cos \theta > \sin \theta \tan\beta$ for all $\theta \in [\underline{\theta}, \overline{\theta}]$ given in (\ref{theta-bound}) and for all $\beta \in \pm \left[ \arctan M - \frac{\pi}{2}, \frac{\pi}{2} - \arctan M\right]$.
Then $x_1(\tau) > 0$ for all $\tau \geq \bar \tau$ and $x(\tau) \to 0\in \mathbb{R}^2$ as $\tau \to \infty$.
\end{prop}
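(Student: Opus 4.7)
The plan is to combine the validated cone condition with the angular hypothesis to split the stable cone into two components according to the sign of $x_1$, and then use continuity of the trajectory together with the isolating-block/stable-manifold structure to confine it to the positive side and conclude convergence.

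First I would invoke Proposition \ref{prop-stable-manifold}: the validated $M$-cone condition in $N$ places $W^s(0;\varphi^{des}_\mu) \cap N$ inside the stable $M$-cone $C^s_M(0;P)$ centered at the origin. Since $N$ is an isolating block and $x(\bar\tau) \in W^s(0;\varphi^{des}_\mu) \cap N$, the whole forward orbit $\{x(\tau)\}_{\tau \geq \bar\tau}$ stays in $W^s(0;\varphi^{des}_\mu) \cap N \subset C^s_M(0;P)$ and converges to $0$ as $\tau \to \infty$ by standard stable-manifold theory (if one wants a quantitative rate, Proposition \ref{prop-Lyapunov-sm} supplies the Lyapunov function $L_s$). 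This already settles the assertion $x(\tau) \to 0$.

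Next I would carry out the rotation bookkeeping underlying (\ref{precondition}). A nonzero point in the stable cone, written in the preconditioned diagonal frame as $(1,\tan\beta)^T$ with $\beta$ in the range of (\ref{stable-cone-angle}), is transported back to the original $(x_1,x_2)$-coordinates by a rotation $R_\theta$ with $\theta \in [\underline\theta,\overline\theta]$ the angle enclosure of $V_s$ from (\ref{theta-bound}). This yields the direction $(\cos\theta - \sin\theta\tan\beta,\ \sin\theta + \cos\theta\tan\beta)^T$. Under the hypothesis $\cos\theta > \sin\theta\tan\beta$, uniform over the enclosures of $\theta$ and $\beta$, every nonzero ray of the stable cone has strictly positive (or strictly negative, after the overall sign flip coming from the two sides of the cone) first coordinate. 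Thus $C^s_M(0;P) \setminus \{0\}$ splits into two disjoint open connected components, one contained in $\{x_1 > 0\}$ and the other in $\{x_1 < 0\}$.

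Finally, since $x(\bar\tau)$ lies in the $\{x_1 > 0\}$ component by assumption and the continuous forward orbit $\{x(\tau) : \tau \geq \bar\tau\}$ stays in $C^s_M(0;P)$, the only way it could leave the positive component would be to pass through the origin. But $0$ is an equilibrium of the smooth vector field (\ref{ODE-desing}), and uniqueness of solutions forbids a non-constant trajectory from reaching it in finite forward time; hence $x_1(\tau) > 0$ for all $\tau \geq \bar\tau$, which completes the argument. The main delicate step is verifying that the two-angle parametrization $(\theta,\beta)$ genuinely exhausts every direction inside the validated stable cone: the stable manifold is only tangent to $V_s$ at the origin and may deviate within the cone away from it, and it is precisely the $M$-cone condition that bounds this deviation by the range of $\beta$ used above.
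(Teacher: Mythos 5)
Your proposal is correct and follows essentially the same route as the paper: cone condition places the forward orbit of $W^s(0;\varphi^{des}_\mu)\cap N$ inside the stable $M$-cone, the angular hypothesis forces the sign of $x_1$ to be constant on each component of the punctured cone, and a connectedness argument starting from $x_1(\bar\tau)>0$ confines the trajectory to the positive component while it converges to the origin. The only cosmetic difference is the final step: you invoke uniqueness of ODE solutions to rule out crossing $0$, whereas the paper uses the vertical-disk (graph) structure of $W^s\setminus\{0\}$ directly; both are equally valid closures of the same argument.
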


\begin{proof}
Our validation indicates that the sign of the first component in $x$-coordinate is identical in each component of the stable cone $C^s_M(0;P)\setminus \{0\}$.
Since $W^s(0;\varphi^{des}_\mu)\cap N$ is contained in $C^s_M(0;P)\cap N$, then the sign of the first component is identical in each component of the stable cone $W^s(0;\varphi^{des}_\mu)\setminus \{0\}$.
By assumption a trajectory $\{x(\tau)\}$ is included in $W^s(0;\varphi^{des}_\mu)$ and $x_1(\bar \tau) > 0$.
Since $W^s(0;\varphi^{des}_\mu)$ is a vertical disk containing $0$, we know that the trajectory $\{x(\tau)\}$ converges to $0$ inside the component of $(W^s(0;\varphi^{des}_\mu)\setminus \{0\})\cap N$ containing $x(\bar \tau)$, which shows that $x_1(\tau) > 0$ for all $\tau \geq \bar \tau$ and the proof is completed.
\end{proof}

If $T(x)$ is a power of $x_1$, the same arguments yield the positivity of $T(x)$.
The condition which $T(x)$ is locally negative everywhere follows from the same arguments with a few modifications.

\begin{rem}
If we want to validate $T(x) > 0$ along $W^u(x_\ast;\varphi^{des}_\mu)\cap N$, replace the following objects during the validation:
\begin{itemize}
\item $V_s$ by $V_u$ : eigenvector at $x=0$ associated with the positive eigenvalue.
\item $C^s_M (0;P)$ by $C^u_M (0;P)$.
\item (\ref{stable-cone-angle}) by
\begin{equation}
\label{unstable-cone-angle}
\begin{pmatrix}
1 \\ \tan \alpha
\end{pmatrix}, \text{ where }\tan \alpha \in \pm \left[\frac{-1}{M}, \frac{1}{M} \right]\text{ with }\arctan \frac{1}{M} \in \left(-\frac{\pi}{2}, \frac{\pi}{2}\right).
\end{equation}
\item We do not need any preconditioning operation like (\ref{precondition}).
\end{itemize}
\end{rem}

\begin{rem}
Rigorous validation methodology for (simple) eigenpairs we have actually applied in this paper to examples in Section \ref{section-example} is based on \cite{Y1980, Mat3}.
\end{rem}

For higher dimensional systems, direction of eigenspaces at $x_\ast$ and slope of cones will determine a sufficient condition to prove $T(x) > 0$ along $W^s(x_\ast;\varphi^{des}_\mu)\cap N$, but we skip the detailed arguments because such a generalization is not used in our validation examples,  and it may depend on the form of $T(x)$ and the direction of trajectories.

\subsubsection{Details 2 : Calculation of enclosure of arrival time. Case study in $2$-dimensional dynamical systems}
The rest is the validation of $t_{\max}$, which depends of the concrete form of $T(x)$ in (\ref{time-desing}).
Here we also treat only $2$-dimensional problems. 
As an example, let $T(x) = x_1^{-m}$ with $m>0$, where $x_1$ is the first component of $x$, and $x_\ast = 0\in \mathbb{R}^2$.
Further we assume that the change of coordinate $P_\mu \equiv c_N^{-1}$ is {\em a nonsingular matrix}\footnote{
In practical validations, $P_\mu$ can be chosen as an affine transformation as long as we apply validation methodology of isolating blocks mentioned in Section \ref{section-block-brief} whose brief summaries are shown in Appendix \ref{section-block}.
}.
\par
If Step 1 in Algorithm \ref{alg-finite} is succeeded, then the stable manifold $W^s(x_\ast; \varphi^{des}_\mu)$ is validated in an isolating block in terms of vertical disk $(\sigma^s_\mu(b), b)$.
Without the loss of generality, we may assume that the initial point $x_0$ is on $W^s(x_\ast; \varphi^{des}_\mu)$ under the time translation.
Then
\begin{align*}
t_{\max} &= \int_0^\infty x_1(\tau;x_0)^{m} d\tau \leq \int_0^\infty |x_1(\tau;x_0)|^{m} d\tau \\
	&\leq (2{\bf p}_1)^{m} \int_0^\infty L_s(x(\tau;x_0))^{m/2} d\tau\\
	&\leq (2{\bf p}_1)^{m}\int_{L_{s,0}}^0 L_s(x(\tau;x_0))^{m/2} \frac{d\tau}{dL_s}dL_s\\
	&\leq -(2{\bf p}_1)^{m}\frac{M^2+1}{2M^2 \overrightarrow{\mu_{s,W^s}}}\int_0^{L_{s,0}} L_s^{m/2-1} dL_s\\
	&= -(2{\bf p}_1)^{m}\frac{M^2+1}{m M^2 \overrightarrow{\mu_{s,W^s}}} L_{s,0}^{m/2} \equiv T_{m,M,x_\ast},
\end{align*}
where $L_{s,0} \geq L_s(x_0)  > 0$ and $\overrightarrow{\mu_{s,W^s}}$ is given in (\ref{rate-stable-sm}).
We have also used the fact
\begin{equation*}
|x_1| = |p_{11}a + p_{12}b| \leq {\bf p}_1 (|a| + |b|) \leq 2{\bf p}_1 (|a|^2 + |b|^2),
\end{equation*}
where $P = (p_{ij})_{i,j=1,2}$ is the matrix and ${\bf p}_1 = \max\{|p_{11}|, |p_{12}|\}$.

Note that $\overrightarrow{\mu_{s,W^s}} < 0$ and hence $T_{m,M,x_\ast}$ is positive. 
The value $T_{m,M,x_\ast}$ gives an upper bound of the arrival time $t_{\max}$ and is computable.
Validation of departure time of $x_\ast$, namely, the time $t > 0$ such that $x(-t) = x_\ast$ and $x(0) = x_0$ can be validated in the similar manner by using the information of $W^u(x_\ast; \varphi^{des}_\mu)$ as horizontal disks and the estimate stated in Proposition \ref{prop-Lyapunov-um}.

\begin{rem}
Obviously, estimates of arrival times can be generalized to the case replacing an equilibrium $x_\ast$ by an invariant set $S$ for desingularized vector field (\ref{ODE-desing}), which induces new type of finite-time singularities involving $S$, such as periodic blow-ups discussed in \cite{Mat}.
\end{rem}

Once we validate cone conditions on stable (resp. unstable) manifolds, we can calculate an upper bound of $L_s$ (resp. an lower bound of $L_u$) in $N$.
Indeed, assume that the stable manifold $W^s(x_\ast; \varphi^{des}_\mu) = \{c_N^{-1}(\sigma^s_\mu(b), b) \mid b\in \overline{B}_s(0,R_s)\}$ of a saddle $x_\ast$ is validated in an $h$-set $N$ with $c_N(N) = \overline{B}_u(0,R_u)\times \overline{B}_s(0,R_s)$.
The unstable $m_u$-cone condition indicates that $W^s(x_\ast; \varphi^{des}_\mu)$ is contained in the cone $C^s_M(x_\ast; P_\mu)\cap N$.
This fact shows that the function $\sigma^s_\mu$ is Lipschitz continuous with Lipschitz constant less than or equal to $M^{-1}$.
Moreover, the origin in $(a,b)$-coordinate is the zero of $L_s(a,b)$, since it corresponds to $x_\ast$ via the change of coordinate.
It thus holds that
\begin{equation*}
L_s(a,b) \leq (1+M^{-2})\|b\|^2 \leq R_s^2(1+M^{-2})\equiv L_{s,0}.
\end{equation*}
The rightmost value is actually computable and, as in the methodology for blow-up time validations \cite{MT2017, TMSTMO}, we can compute the upper bound of $t_{\max}$ as $T_{m,M,x_\ast}$.

\subsection{Remark : Similarity to blow-up solution validations}
We have shown a numerical validation methodology of solutions with finite-time singularities.
Notice that the basic idea is essentially the same as that of blow-up solutions (e.g., \cite{MT2017, TMSTMO}).
Indeed, the qualitative difference is the treatment of points inducing finite-time singularity.
In the case of blow-up solutions, they are located {\em at infinity}\footnote
{
It is justified via compactifications (e.g., \cite{Mat}) of phase spaces.
}.
On the other hand, the latter is typically located in a bounded region of phase space.
According to preceding studies (e.g., \cite{DV1991, MS2001}), the similarity is not so surprising.
But there is an interesting point that such difference and similarity cannot be seen from the form of equations, but can be seen through rigorous computation methodologies.

\section{Validation examples of finite-time singularities}
\label{section-example}
In this section, we demonstrate the applicability of our methodology.
Here we consider finite traveling waves involving finite-time extinction for degenerate differential equations, compacton traveling waves with countable family of composite waves, and trajectories through folded singularities in fast-slow systems.
Both examples show explicit estimates of {\em finite} features of solutions such as support of compactons, or arrival and passing times through singularities, which are new insights of natures in nonlinear systems via rigorous numerics.
All computations were carried out on 
macOS X El Capitan (ver.10.11.6),
CPU 3.1 GHz Intel(R) Core i7 processor and 16GB 1867 MHz DDR3 memory using CAPD library \cite{CAPD} ver. 4.0 to rigorously compute the trajectories of ODEs.
Validation codes are available at \cite{Mat_code}.

\subsection{Finite traveling waves in degenerate diffusion equation}
\label{section-diff}
The first example is the finite-time extinction of the following degenerate diffusion equation:
\begin{equation}
\label{demo1}
u_t = \frac{1}{m+1}(u^{m+1})_{xx} + u^p(1-u)(u-a),\quad m>0,\ p>0\text{ with }m+p=1,
\end{equation}
where $a\in (0,1)$ is a parameter.
Setting $\xi =x-ct$, we have the equation for traveling waves $u(t,x)=\phi(x-ct)$ as follows:
\begin{equation*}
-c\phi' = (\phi^m \phi')' + \phi^p(1-\phi)(\phi-a),\quad {}' = \frac{d}{d\xi}.
\end{equation*}
Introducing the new moving-frame coordinate
\begin{equation*}
\frac{dz}{d\xi} = \phi(\xi)^{-m},
\end{equation*}
we have
\begin{equation*}
-c\phi^{-m}\dot \phi = \phi^{-m} \ddot \phi + \phi^p(1-\phi)(\phi-a),\quad \dot {} = \frac{d}{dz}.
\end{equation*}
equivalently,
\begin{equation}
\label{demo1-main}
-c\dot \phi = \ddot \phi + \phi (1-\phi)(\phi-a)\quad \Leftrightarrow \quad 
\begin{cases}
\dot \phi = \psi, &\\
\dot \psi = -c\psi - \phi(1-\phi)(\phi-a).
\end{cases}
\end{equation}
A preceding work \cite{LPB2004} reports that the system (\ref{demo1}) admits finite time extinction for any solutions with compactly supported initial data $u_0(x)$.
In fact, comparison theorem for (\ref{demo1}) indicates that finite traveling waves $\phi(z)$ with $\lim_{z\to -\infty}\phi(z) = 0$ and $\lim_{z\to +\infty}\phi(z) = 1$, and a positive speed $c>0$, up to translation of moving frame, dominate the compactly supported solution $u(t,x)$ with $u(0,x) = u_0(x)$. 
The key of finite-time extinction is thus the existence of finite traveling waves.

\subsubsection{Finite traveling wave front and finite time extinction}
\label{section-ex-extinction}
Let, for example, $m=3/4$ and $p=1/4$.
Obviously the origin $(\phi, \psi) = (0,0) \equiv p_0$ is an equilibrium for (\ref{demo1-main}).
Moreover, it is hyperbolic with $a > 0$. 
Indeed, the Jacobian matrix $J$ for the linearized matrix at the origin is
\begin{equation*}
J(p_0) = \begin{pmatrix}
0 & 1 \\
a & -c
\end{pmatrix}, \quad {\rm Spec}(J(p_0)) = \left\{\lambda_\pm = \frac{-c\pm \sqrt{c^2+a}}{2}\right\}.
\end{equation*}

\par
The nontrivial maximal existence time in the original $\xi$-scale is intrinsically concerned with estimates of passage time in $\xi$-scale near the origin $p_0$.
Assume that $p_0$ is validated with an isolating block $N$ satisfying $M$-cone conditions.
If we further assume that the solution trajectory $(\phi(\xi), \psi(\xi))$ leaves the exit $N^-$ with the initial time $-z_N$ corresponding to $\xi=0$, for example, the minimum time $\xi_{\min}$ is estimated as follows:

\begin{align*}
\xi_{\min} &\equiv \int_{-\infty}^{-z_N}\phi(\eta)^m d\eta = \int_{z_N}^\infty \phi(\tilde z)^m d\tilde z\quad (\tilde z = -z)\\
& \leq \int_{z_N}^\infty \phi(\tilde z)^m d\tilde z \leq \int_{z_N}^\infty |\phi(\tilde z)|^m d\tilde z \\
	&\leq (2{\bf p}_1)^m \int_{z_N}^\infty \tilde L_u(\phi(\tilde z))^{m/2} d\tilde z\quad (\tilde L_u = -L_u)\\
	&\leq (2{\bf p}_1)^m \int_{L_{u,0}}^0 \tilde L_u(\phi(\tilde z))^{m/2} \frac{d\tilde z}{d\tilde L_u}d\tilde L_u\\
	&\leq (2{\bf p}_1)^m \frac{M^2+1}{2M^2 \overrightarrow{\xi_{u,W^u}}}\int_0^{L_{u,0}} \tilde L_u^{m/2-1} d\tilde L_u\\
	&= (2{\bf p}_1)^m \frac{M^2+1}{m M^2 \overrightarrow{\xi_{u,W^u}}} L_{u,0}^{m/2} \equiv T_{m,M,x_\ast},
\end{align*}
where we have used (\ref{Lyapunov-unstable-2}) for validating $\overrightarrow{\xi_{u,W^u}}$.
Following the validation methodology in Algorithm \ref{alg-finite}, we obtain the following result.
The other computer-assisted results are obtained in the similar manner.

\begin{car}
\label{car_FTW_diff}
Consider the degenerate diffusion equation (\ref{demo1}) with $m=3/4$, $p=1/4$ and $a=0.3$.
Then there is a value $c_\ast \in 0.282842_{72}^{99}$ such that (\ref{demo1}) admits a finite traveling wave solution
\begin{equation*}
u(t,x) = \begin{cases}
\phi(x-c_\ast t) & \text{if $x-c_\ast t \geq 0$}\\
0 & \text{otherwise}
\end{cases}
\end{equation*}
up to translations,
where $\phi(x-c_\ast t)$ is the global solution of the desingularized system (\ref{demo1-main}) satisfying
\begin{equation*}
\lim_{z \to -\infty} \phi(z) = 0,\quad \lim_{z \to +\infty} \phi(z) = 1.
\end{equation*}
Isolating blocks $N_1$ and $N_2$ containing $(\phi_1, \psi_1) = (0,0)$ and $(\phi_2, \psi_2) = (1,0)$, respectively, validating the above global solution are given by $N_i = P_i B_i + \{(\phi_i, \psi_i)\}$, where $P_i = [V_1^i\ V_2^i]$ is the eigenmatrix whose columns are eigenvectors given in 
Table 1, and
\begin{align*}
B_1 &= ([-1.0000000056134599e,+1.0001161922375184]\times 10^{-4})\\
	&\quad \quad \times ([-2.0953502949606345,+2.0000046041789743]\times 10^{-7}),\\
B_2 &= ([-1.0000068275040609,+1.0000038705938853]\times 10^{-6})\\
	&\quad \quad \times ([-1.0000261784559085,+1.0000461805915562e-07]\times 10^{-7}).
\end{align*}
Moreover, we have the following domain estimate in the original frame scale $\xi$:
\begin{equation*}
|\phi^{-1}(0,1/2)| \in 1._{44122673040}^{50547975352}.
\end{equation*}
See Figure \ref{fig-extinction}.
\end{car}

\begin{figure}[htbp]
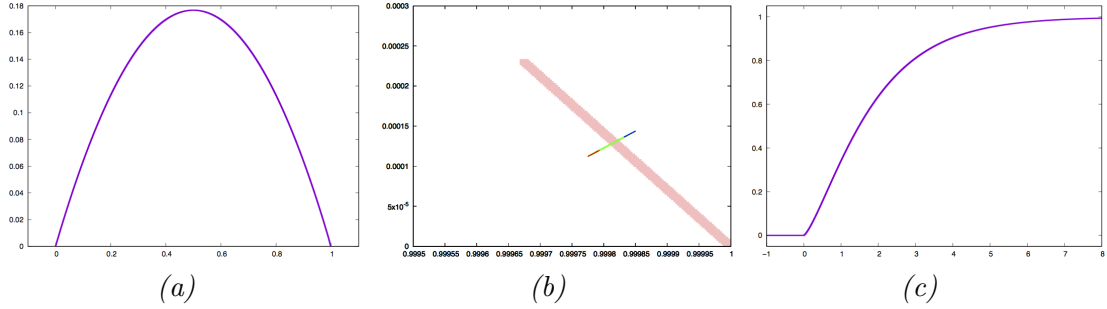
\em
\begin{minipage}{0.32\hsize}
\centering
\includegraphics[width=5cm]{figs/fig_diff.png}
(a)
\end{minipage}
\begin{minipage}{0.32\hsize}
\centering
\includegraphics[width=5cm]{figs/fig_validate_diff_1.png}
(b)
\end{minipage}
\begin{minipage}{0.32\hsize}
\centering
\includegraphics[width=5cm]{figs/fig_diff_plot.png}
(c)
\end{minipage}\\
\caption{Finite traveling wave profile for Computer Assisted Result \ref{car_FTW_diff}}
\label{fig-extinction}
(a) : A numerical heteroclinic trajectory connecting $(\phi, \psi) = (0,0)$ and $(\phi, \psi) = (1,0)$ for (\ref{demo1-main}) in $(\phi, \psi)$-phase plane.
(b) : Verification of covering relation $N\overset{\varphi(\bar \tau,\cdot)}{\Longrightarrow}M$ with $\bar \tau=21.528$, where $N$ is the exit of an isolating block around $(0,0)$ times $(c\in )0.282842_{72}^{99}$ and $M$ is an isolating block containing $(1,0)$.
The green region denotes $\varphi(\bar \tau,N)$, the blue region $\varphi(\bar \tau,N)\cap \{c=0.28284272\}$ and the red region $\varphi(\bar \tau,N)\cap \{c=0.28284299\}$.
(c) : Corresponding profile of finite traveling wave for (\ref{demo1}) in $(x,\phi)$-plane.
This wave evolves in positive $x$-direction keeping the profile.
\end{figure}

\begin{table}[ht]
\label{table_FTW_diff}
\centering
\begin{tabular}{ccc}
\hline 
 & At $(\phi_1, \psi_1) = (0,0)$ & At $(\phi_2, \psi_2) = (1,0)$ \\
\hline\\[-2mm]
$M_u$ or $M_s$ for cone condition & $M_u = 5000$ & $M_s = 500$ \\[1mm]
$\ell_u$ or $\ell_s$ for extended cones & $\ell_u = 0.008$ & $\ell_s = 0.00007$ \\[1mm]
Eigenvalue $\lambda_1$ & $\in 0.7071_{6588504430233}^{8897990200269}$ & $\in 0.424_{29327568669606}^{31960764617949}$ \\[1mm]
Eigenvalue $\lambda_2$ & $\in -0.9900_{5997633633080}^{3716718098678}$ & $\in -0.7071_{9000248876762}^{6515941513469}$ \\[1mm]
Eigenvector $V_1$ & $\in \begin{pmatrix}0.8165_{6790481467278}^{8860207657336}\\ 0.577_{39795597826360}^{41798779753506}\end{pmatrix}$ & $\in \begin{pmatrix}0.9206_{5568909148965}^{7769701299829} \\ 0.390_{59404674239356}^{61744908763335}
\end{pmatrix}$\\[2mm]
Eigenvector $V_2$ & $\in \begin{pmatrix}-0.7107_{5049673357216}^{2968702694894} \\ 0.703_{58645755593130}^{60724878877856} \end{pmatrix}$ & $\in \begin{pmatrix} -0.8165_{9045287636589}^{6593971584757} \\ 0.577_{39533855080161}^{42076686823884}  \end{pmatrix}$\\[2mm]
$\tan \theta$ in (\ref{theta-bound}) with $V_1$ &  $\in 0.707_{08549508277319}^{12794905720078}$ & $-$ \\[2mm]
\hline 
\end{tabular}%
\caption{Detailed data for Computer Assisted Result \ref{car_FTW_diff}}
\end{table}

According to \cite{LPB2004}, the original differential equation (\ref{demo1}) admits the comparison principle.
In particular, if $u_0 = u_0(x)$ is the initial data such that $u_0(x) \leq \phi(x)$ for all $x\in \mathbb{R}$, where $\phi$ is the validated traveling wave solution obtained in Computer Assisted Result \ref{car_FTW_diff} with appropriate frame translation, then we have $u(t, x) \leq \phi(x-c_\ast t)$ for all $t\geq 0$ and $x\in \mathbb{R}$.
Moreover, the evolved solution $u(t,x)$ tends to zero everywhere in $x$ for some $t = T < \infty$ (Corollary 3.1 in \cite{LPB2004}).
In other words, a {\em finite time extinction} occurs.

\subsubsection{Compacton traveling wave}
\label{section-ex-compacton}

Next we focus on compactly supported traveling waves called {\em compactons}.
In the present problem, it would correspond to a global trajectory of (\ref{demo1-main}) homoclinic to the origin, which is actually validated with an appropriate choice of parameters.
\begin{car}
\label{car_compacton_diff}
Consider (\ref{demo1}) with $m=3/4$, $p=1/4$ and $a=0.3$.
Then there exists a value $c_\ast\in 1.5\times 10^{-6}\times [-1, 1]$ such that the function $u(t,x)=\phi(x-c_\ast t)$ given by
\begin{equation*}
\phi(x-c_\ast t) \equiv \phi(\xi) = \begin{cases}
\varphi(\xi) & \xi \in (\xi_{\min}, \xi_{\max}),\\
0 & \xi \in \mathbb{R}\setminus (\xi_{\min}, \xi_{\max})
\end{cases}
\end{equation*}
is a compacton traveling wave solution with speed $c_\ast$\footnote{
This result does not tell us the detailed information of $c_\ast$ whether or not $c_\ast = 0$. 
If it is the case, the wave should be said to be a compacton {\em standing wave}.
}, where $\{\phi(\tau)\}$ is a homoclinic orbit of $(\phi, \psi) = (0,0)$ contained in the isolating block $N = PB$, where $P = [V_1\ V_2]$ is the eigenmatrix given in 
Table 2, and
\begin{align*}
B &= ([-1.0000068275040609,+1.0000038705938853]\times 10^{-6})\\
	&\quad \quad \times ([-1.0000261784559085,+1.0000461805915562e-07]\times 10^{-7}).
\end{align*}

Finally, $\xi_{\min}$ and $\xi_{\max}$ can be determined as finite values satisfying
\begin{equation*}
\xi_{\max} - \xi_{\min} \equiv |{\rm supp}u| \in 5._{7503277707}^{8115725019}
\end{equation*}
See Figure \ref{fig-compacton}.
\end{car}

\begin{figure}[htbp]\em
\begin{minipage}{0.32\hsize}
\centering
\includegraphics[width=5cm]{figs/fig_diff2.png}
(a)
\end{minipage}
\begin{minipage}{0.32\hsize}
\centering
\includegraphics[width=5cm]{figs/fig_validate_diff_2.png}
(b)
\end{minipage}
\begin{minipage}{0.32\hsize}
\centering
\includegraphics[width=5cm]{figs/fig_diff_plot2.png}
(c)
\end{minipage}\\
\caption{Finite traveling wave profile for Computer Assisted Result \ref{car_compacton_diff}}
\label{fig-compacton}
(a) : A numerical homoclinic trajectory of $(\phi, \psi) = (0,0)$ for (\ref{demo1-main}) in $(\phi, \psi)$-phase plane.
(b) : Verification of covering relation $N\overset{\varphi(\bar \tau,\cdot)}{\Longrightarrow}M$ with $\bar \tau=29.403$, where $N$ is the exit of an isolating block around the origin times $(c\in )1.5\times 10^{-6}\times [-1, 1]$ and $M$ is an isolating block containing the origin (pink).
The green region denotes $\varphi(\bar \tau,N)$, the blue region $\varphi(\bar \tau,N)\cap \{c=-1.5\times 10^{-6}\}$ and the red region $\varphi(\bar \tau,N)\cap \{c=1.5\times 10^{-6}\}$.
(c) : Corresponding profile of compacton traveling wave for (\ref{demo1}) in $(x,\phi)$-plane.
\end{figure}

\begin{table}[ht]
\centering
\begin{tabular}{cc}
\hline 
 & At $(0,0),\ c\in 5.0\times 10^{-6}\times [-1, 1]$ \\
\hline\\[-2mm]
$(M_u, M_s)$ for cone condition & $(7000,100)$ \\[1mm]
$(\ell_u, \ell_s)$ for extended cones & $(0.0015, 0.001)$ \\[1mm]
Eigenvalue $\lambda_1$ & $\in 0.5477_{7157930167553}^{8308020874578}$  \\[1mm]
Eigenvalue $\lambda_2$ & $\in -0.5477_{8308024148070}^{7157930405451}$ \\[1mm]
Eigenvector $V_1$ & $\in \begin{pmatrix} 0.87714_{184587409483}^{960434933953} \\ 0.4804_{2765940975440}^{3734031068247} \end{pmatrix}$ \\[2mm]
Eigenvector $V_2$ & $\in \begin{pmatrix} -0.87714_{960433700939}^{184587344934} \\ 0.4804_{2765939685023}^{3734034727664} \end{pmatrix}$ \\[2mm]
$\tan \theta$ in (\ref{theta-bound}) with $V_1$ &  $\in 0.5477_{1461678550337}^{3049828892173}$ \\[2mm]
$\tan \theta$ in (\ref{theta-bound}) with $V_2$ &  $\in -0.5477_{3049833104448}^{1461677849109}$ \\[2mm]
\hline 
\end{tabular}%
\label{table_compacton_diff}
\caption{Detailed data for Computer Assisted Result \ref{car_compacton_diff}}
\end{table}

\begin{rem}
In \cite{SZ2013}, the property which numerically validated traveling wave has compact support is proved, which is due to hyperbolicity of the origin in the desingularized system.
There the concrete support has not been estimated explicitly.
On the other hand, our present validation also measures the concrete size of supports.
\end{rem}

\subsubsection{Composite wave 1: dead core expansion}
\label{section-ex-deadcore}

Finite traveling waves whose supports are proper subsets of $\mathbb{R}$, under symmetry in governing equation, easily generate another weak solutions by superposing finite traveling wave profiles, which are called {\em composite waves}.
Generation of composite waves is one of fundamental and non-trivial problems for nonlinear wave systems, such as solitary waves for KdV-type systems, or shocks and rarefactions for Riemann problems of conservation laws (e.g., \cite{SMP1996}).
Unlike these special structured waves, finite traveling waves easily generate composite waves at least in the present system.
With the help of concrete wave validations by rigorous numerics, we can discuss concrete possible types of composite waves, which are available not only to (\ref{demo1}) but to various systems.
\par
In the present case, let $\phi = \phi(\xi)$ be a traveling wave validated in, say, Computer Assisted Result \ref{car_FTW_diff}. 
Note that the wave $\phi$ has the speed $c > 0$.
We then easily know that $\tilde \phi(\xi) := \phi(-\xi)$ is also a finite traveling wave solution of (\ref{demo1}) with the same $a$ and the speed $-c$.
Obviously, for any $w\in \mathbb{R}$, the function $\phi(\xi+w)$ is also a finite traveling wave solution, which is nothing but the translation symmetry of traveling waves.
Let $\phi_w$ be the wave profile $\phi$ defined by
\begin{equation*}
\phi_w(\xi) = \begin{cases}
\phi(\xi) & \xi > w\\
0 & \xi \leq w
\end{cases},
\end{equation*}
in which case we have
\begin{equation*}
\tilde \phi_w(\xi)\equiv \phi_w(-\xi) = \begin{cases}
\phi(-\xi) & \xi < -w\\
0 & \xi \geq -w
\end{cases}.
\end{equation*}
Note that the endpoint $w$ can be freely arranged, which is due to translation symmetry, so that ${\rm supp}\phi_w \cap {\rm supp}\tilde \phi_w = \emptyset$ for all $(t,x)\in (0,\infty)\times \mathbb{R}$.
We thus have the following result.
\begin{cor}
\label{cor-deadcore}
Let $\phi(\xi) = \phi(x-ct)$ be a finite traveling wave solution of (\ref{demo1}) with speed $c > 0$, say that validated in Computer Assisted Result \ref{car_FTW_diff}.
Then, for any $w_1, w_2\in \mathbb{R}$ with $w_1 + w_2 > 0$, the function
\begin{equation*}
\Phi_{w_1, w_2}(t,x) := 
\phi_{w_1}(\xi) + \tilde \phi_{w_2}(\xi)
\end{equation*}
is a weak solution of (\ref{demo1}).
\end{cor}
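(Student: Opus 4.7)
My plan is to express $\Phi_{w_1,w_2}$ as the sum of two weak traveling wave solutions of (\ref{demo1}) with pointwise disjoint supports on $Q_T$ for every $T>0$, and then invoke Proposition \ref{prop-superpose}.

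First I would identify the right-moving piece $u_1(t,x) := \phi_{w_1}(x - ct)$ as a weak solution of (\ref{demo1}); it is just the finite traveling wave validated in Computer Assisted Result \ref{car_FTW_diff}, with the moving-frame origin shifted to $w_1$, and the weak formulation is invariant under such translations in $x$ and $t$. Next I would introduce the left-moving companion $u_2(t,x) := \tilde\phi_{w_2}(x + ct)$: since (\ref{demo1}) is invariant under the spatial reflection $x\mapsto -x$ and $\tilde\phi(\xi) = \phi(-\xi)$ is a finite traveling wave with speed $-c$ as noted just before the corollary, the same translation argument shows that $u_2$ is a weak solution. I would then verify support disjointness: by construction, ${\rm supp}\,u_1(t,\cdot) \subseteq \{x \geq w_1 + ct\}$ and ${\rm supp}\,u_2(t,\cdot) \subseteq \{x \leq -w_2 - ct\}$, and for $t \geq 0$ and $c > 0$ these sets are disjoint iff $w_1 + w_2 > -2ct$, which follows from the hypothesis $w_1 + w_2 > 0$ together with $ct \geq 0$. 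Hence ${\rm supp}\,u_1 \cap {\rm supp}\,u_2 = \emptyset$ throughout $Q_T$, and Proposition \ref{prop-superpose} yields that $u_1 + u_2 = \Phi_{w_1,w_2}$ is a weak solution.

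The only point requiring care is that Proposition \ref{prop-superpose} is stated for polynomial $f$, whereas here $f(u) = u^p(1-u)(u-a)$ with $p = 1/4$ is not polynomial. However, its proof invokes polynomiality only through the pointwise identity $f(u+v) = f(u) + f(v)$ at points where $uv = 0$, and this identity holds for any $f$ satisfying $f(0) = 0$, in particular for the present $f$. The main obstacle, such as it is, will be confirming that the reflection symmetry $x\mapsto -x$ genuinely descends to the weak formulation (\ref{weak-form-diff}); but since (\ref{demo1}) contains only even-order spatial derivatives and the boundary terms at $\pm r$ are symmetric under the reflection (applied simultaneously to test functions via $\phi(t,x)\mapsto \phi(t,-x)$), this amounts to routine bookkeeping rather than a serious difficulty.
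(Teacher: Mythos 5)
Your proof follows the same route as the paper: express $\Phi_{w_1,w_2}$ as a sum of two opposite-direction finite traveling waves, verify that their supports stay disjoint for all $t\geq 0$ using $c>0$ and $w_1+w_2>0$, and conclude via Proposition \ref{prop-superpose}. The explicit support estimates and the check that reflection symmetry descends to the weak formulation are correct and are implicit in the paper's one-line proof. One point where you go beyond the paper: you notice that Proposition \ref{prop-superpose} is stated under the hypothesis that $f$ is polynomial, whereas $f(u)=u^{1/4}(1-u)(u-a)$ in (\ref{demo1}) is not, and you correctly observe that the only place the proposition's proof uses this is to obtain $f(u+v)=f(u)+f(v)$ pointwise where $uv=0$ — an identity that requires only $f(0)=0$. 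The paper applies the proposition here without remarking on this mismatch, so your repair is a genuine and worthwhile addition.
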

\begin{proof}
Notice that $\phi_{w_1}$ and $\tilde \phi_{w_2}$ move in the opposite directions to each other so that ${\rm supp}\phi_{w_1}(t, \cdot) \cap {\rm supp}\tilde \phi_{w_2}(t, \cdot) = \emptyset$ holds for all $t\geq 0$.
Therefore all assumptions in Proposition \ref{prop-superpose} are satisfied for $\phi_{w_1}$ and $\tilde \phi_{w_2}$. 
In particular, the sum $\Phi_{w_1, w_2}(t,x)$ is also a weak solution of (\ref{demo1}).
\end{proof}

The solution $\Phi_{w_1, w_2}$ is referred to as a {\em composite wave} since two wave profiles are superposed.
By construction the support $\Phi_{w_1,w_2}$ is monotonously contracted. 
More precisely,
${\rm supp}\Phi_{w_1,w_2}(t_2, \cdot)\subset {\rm supp}\Phi_{w_1,w_2}(t_1, \cdot)$
holds for all $t_2 > t_1 \geq 0$.
Equivalently, the zero region $N_0(\Phi_{w_1,w_2}(t,\cdot)) =\{x\in \mathbb{R}\mid \Phi_{w_1,w_2}(t,x) = 0\}$ expands monotonously; namely, $N_0(\Phi_{w_1,w_2}(t_1, \cdot)) \subset N_0(\Phi_{w_1,w_2}(t_2, \cdot))$ holds for all $t_2 > t_1 \geq 0$.
This behavior represents an expansion of {\em dead core} (e.g., \cite{LPB2004}).
See Figure \ref{fig-further}-(a).

\begin{figure}[htbp]
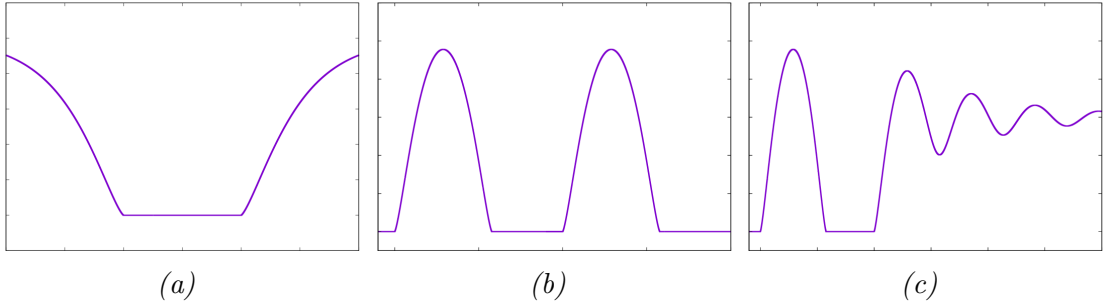
\em
\begin{minipage}{0.32\hsize}
\centering
\includegraphics[width=5cm]{figs/fig_dead_core.png}
(a)
\end{minipage}
\begin{minipage}{0.32\hsize}
\centering
\includegraphics[width=5cm]{figs/fig_composite_compacton.png}
(b)
\end{minipage}
\begin{minipage}{0.32\hsize}
\centering
\includegraphics[width=5cm]{figs/fig_composite_osc.png}
(c)
\end{minipage}
\caption{Various weak solutions for (\ref{demo1}) in $(x,\phi)$-plot}
\label{fig-further}
(a) : Expanding dead core (Corollary \ref{cor-deadcore}). 
Each wave component evolves so that the null region of $u$; namely dead core, is monotonously expanded.
(b) : Composite finite traveling wave $\phi_{{\bf w}(+,+)}$ consisting of two positive compactons (Corollary \ref{cor_composite1_diff}). 
Wave speed of all wave components is identical by our construction and hence the supports of any two wave components never intersect for all time.
(c) : Composite wave consisting of a compacton and an oscillating finite traveling wave (Corollary \ref{cor_composite2_diff}).
It is not actually a traveling wave solution in the present sense because each wave component evolves with different speeds.
\end{figure}

\subsubsection{Composite wave 2 : infinitely many composite waves generated by a compacton}
\label{section-ex-osc}

Once we validate a compacton traveling wave $\phi^c_{[w_{\min}, w_{\max}]}$ with the speed $c\in \mathbb{R}$ and the support $[w_{\min}, w_{\max}]$ for (\ref{demo1}), we can easily construct countably many types of weak solutions parameterized by the number of nontrivial profiles and their supports.
For example, for a given $N\in \mathbb{N}$, let $\Phi_{{\bf w}_N} = \Phi_{{\bf w}_N}(t,x)$ be the function defined as
\begin{equation}
\label{compacton-family}
\Phi_{{\bf w}_N}(t,x) := \sum_{i=1}^N \phi^c_{[w_{{\min},i}, w_{{\max},i}]}(x-ct),
\end{equation}
where ${\bf w}_N = \{[w_{{\min},i}, w_{{\max},i}]\}_{i=1}^N$ denotes the family of supports of each {\em wave components}; namely the collection of $\phi_{[w_{{\min},i}, w_{{\max},i}]}$'s, at $t=0$ such that
\begin{equation*}
w_{{\min},i} < w_{{\max},i}\quad \text{ and  }\quad w_{{\max},j} < w_{{\min},j+1}
\end{equation*}
hold for $i=1,\cdots, N$ and $j=1,\cdots, N-1$.
Then, since propagation speeds of wave components are identical, their supports {\em never} intersect for all $t\in \mathbb{R}$.
In particular, any two pairs of wave components satisfy all assumptions of Proposition \ref{prop-superpose}.
It immediately follows that $\Phi_{{\bf w}_N}(t,x)$ is a weak solution of (\ref{demo1}).
Since the natural number $N$ can be chosen arbitrarily, we obtain (at least) countably many families of weak solutions $\{\Phi_{{\bf w}_N}\}_{N=1}^\infty$.
Combining this consequence with, say Computer Assisted Result \ref{car_compacton_diff}, we obtain the following result.

\begin{cor}[Existence of countable families of composite waves]
\label{cor_composite1_diff}
Consider (\ref{demo1}) with $m=3/4$, $p=1/4$ and $a=0.3$.
Let $\phi^{c_\ast}_{([w_{{\min}}, w_{{\max}}])}$ be a compacton traveling wave validated in Computer Assisted Result \ref{car_compacton_diff}\footnote{
Note that we do {\em not} discuss the uniqueness of such compacton waves in the present arguments.
Therefore we just fix one of such $c_\ast$. 
If the unique existence of $c_\ast$ is also validated, then $c_\ast$ should be fixed as the value.
}.
Then, for any $N\in \mathbb{N}$, the function $\Phi_{{\bf w}_N}$ of the form (\ref{compacton-family}) such that the width of support $[w_{{\min},i}, w_{{\max},i}]$ for each wave component is bounded by
\begin{equation*}
w_{{\max},i} - w_{{\min},i}\in 5._{7503277707}^{8115725019}
\end{equation*}
is a weak solution of (\ref{demo1}).
In particular, $u(t,x) = \Phi_{{\bf w}_N}(x-ct)$ is a finite traveling wave solution of (\ref{demo1}) for any $N\in \mathbb{N}$ and ${\bf w}_N$.
See Figure \ref{fig-further}-(b).
\end{cor}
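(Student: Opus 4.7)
The strategy is an inductive application of Proposition \ref{prop-superpose} to the $N$ wave components, exploiting the fact that a common speed $c_\ast$ preserves disjointness of supports for all time.

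First I would note that the governing equation (\ref{demo1}) is translation invariant in $x$, so for each $i = 1, \ldots, N$ the translated compacton profile $\phi^{c_\ast}_{[w_{\min,i}, w_{\max,i}]}(x - c_\ast t)$ is itself a weak solution with speed $c_\ast$ obtained from Computer Assisted Result \ref{car_compacton_diff}. The width bound on each support is inherited directly from that result, since each component is a translate of the single validated compacton.

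Next I would record the key kinematic observation: because every wave component travels with the same speed $c_\ast$, the time-$t$ support of the $i$-th component is the rigid translate $[w_{\min,i} + c_\ast t, w_{\max,i} + c_\ast t]$. The positive gaps $w_{\min,j+1} - w_{\max,j} > 0$ between consecutive components at $t = 0$ are therefore preserved for all $t \in \mathbb{R}$, so the supports of any two components are pairwise disjoint in $Q_T$ for every $T > 0$.

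The main step is then induction on $N$. The base case $N = 1$ is precisely Computer Assisted Result \ref{car_compacton_diff}. For the inductive step, assume $\Phi_{{\bf w}_{N-1}}$ is a weak solution; its support is contained in the union of the first $N-1$ component supports, which by the previous paragraph is disjoint from the support of $\phi^{c_\ast}_{[w_{\min,N}, w_{\max,N}]}$. Applying Proposition \ref{prop-superpose} with $u = \Phi_{{\bf w}_{N-1}}$ and $v = \phi^{c_\ast}_{[w_{\min,N}, w_{\max,N}]}$ then yields that $\Phi_{{\bf w}_N}$ is a weak solution. Since $\Phi_{{\bf w}_N}(t,x)$ depends on $(t,x)$ only through $x - c_\ast t$, the resulting solution is automatically a finite traveling wave with speed $c_\ast$.

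I do not expect a substantial obstacle. The only subtlety worth flagging is that Proposition \ref{prop-superpose} is stated for polynomial $f$, whereas here $f(u) = u^{p}(1 - u)(u - a)$ with $p = 1/4$ is only H\"older. However, inspection of that proof shows the polynomial hypothesis is used only to obtain $f(u + v) = f(u) + f(v)$ pointwise on $Q_T$, and for nonnegative $u, v$ with disjoint supports this identity follows directly from $f(0) = 0$; the remaining manipulations of $(u + v)^m$ and $(u + v)^{m+1}$ likewise rely only on the disjoint-support hypothesis and the nonnegativity. The induction therefore goes through unchanged.
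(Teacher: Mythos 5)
Your proof is correct and follows essentially the same route the paper only sketches; the paper never gives a formal proof of this corollary, relying on the preceding discussion (``\ldots any two pairs of wave components satisfy all assumptions of Proposition \ref{prop-superpose}. It immediately follows that $\Phi_{{\bf w}_N}(t,x)$ is a weak solution \ldots''). Your induction on $N$ via Proposition \ref{prop-superpose}, with the kinematic observation that a common speed $c_\ast$ preserves pairwise disjointness of the (pairwise ordered) supports for all $t$, is exactly what the paper intends. Your flag about the polynomial hypothesis is also worth keeping: $f(u)=u^{p}(1-u)(u-a)$ with $p=1/4$ is indeed not polynomial, so the paper is applying Proposition \ref{prop-superpose} slightly beyond the letter of its stated hypotheses; as you correctly observe, the proof of that proposition only needs $f(0)=0$ together with continuity and disjoint nonnegative supports to obtain $f(u+v)=f(u)+f(v)$ pointwise (and likewise $(u+v)^m=u^m+v^m$, $(u+v)^{m+1}=u^{m+1}+v^{m+1}$), so its conclusion persists for the non-polynomial $f$ at hand. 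This is a small gap in the paper's formulation that your argument closes explicitly.
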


\par
Furthermore, the other finite traveling wave profiles with different propagation speeds can be attached to $\Phi_{{\bf w}_N}$ {\em as long as supports of every pairs of wave components are disjoint for all $t\geq 0$}.
Numerical calculations indicate that there is a finite traveling wave solution connecting $u\equiv 0$ and  $u\equiv a=0.3$ for any $c>0$.
For practical values, these trajectories can be validated with rigorous numerics.
Indeed, we have the following validation result as an example.

\begin{car}[Finite traveling wave with oscillation]
\label{car_osc_diff}
Consider (\ref{demo1}) with $m=3/4$, $p=1/4$ and $a=0.3$.
Then for all $c\in [0.0999, 0.1001]$, the function $u(t,x)=\phi(x-ct)$ given by
\begin{equation*}
\phi(x-ct) \equiv \phi(\xi) = \begin{cases}
\varphi(\xi) & \xi > \xi_{\min},\\
0 & \xi \leq \xi_{\min}
\end{cases}
\end{equation*}
is a finite traveling wave solution with speed $c$ such that $\lim_{\xi\to +\infty}\phi(\xi) = a$ holds with oscillation.
The concrete profile is seen in Figure \ref{fig-further}-(c) (the right wave component).
Detailed data for validation is listed in 
Table 3.
\end{car}

\begin{table}[ht]
\centering
\begin{tabular}{ccc}
\hline 
 & At $(a,0)$, $c\in [0.0999, 0.1001]$ \\
\hline\\[-2mm]
$(M_u, M_s)$ for cone condition & $(({\rm empty}), 60)$ \\[1mm]
$\ell_s$ for extended cones & $\ell_s = 0.04$ \\[1mm]
${\rm Re}\lambda$ & $\in -0.050_{278878499779037}^{101269209305632}$ \\[1mm]
${\rm Im}\lambda\ (>0)$ & $\in 0.455_{21318969367819}^{33131247286973}$ \\[1mm]
${\rm Re}V$ & $\in \begin{pmatrix} 0.909_{04294196743196}^{13350922139213} \\ -0.045_{723512838268622}^{531445098151199} \end{pmatrix}$ \\[2mm]
${\rm Im}V $ & $\in \begin{pmatrix} \pm 2.0021795094256335 \times 10^{-16}\times [-1,1] \\ 0.414_{40857864989611}^{26997851057934} \end{pmatrix}$ \\[2mm]
\hline 
\end{tabular}%
\label{table_osc_diff}
\caption{Detailed data for Computer Assisted Result \ref{car_osc_diff}}
\end{table}

Comparing the oscillating finite traveling wave $\phi^{osc, c_2}$ with a compacton traveling wave $\phi^{c_1}$ in Computer Assisted Result \ref{car_compacton_diff}, we have the inequality $c_1 < c_2$.
Using the fact, define $\Phi^{c_1,c_2} = \Phi^{c_1,c_2}(t,x)$ be a function given as
\begin{equation}
\label{composite-family-2}
\Phi^{c_1,c_2}(t,x) := \phi^{c_1}_{[w_{{\min},1}, w_{{\max},1}]}(x-c_1 t) + \phi^{osc, c_2}_{w_{{\min},2}}(x-c_2 t)
\end{equation}
such that $w_{{\max},1} < w_{{\min},2}$.
Then it holds that the supports ${\rm supp} \phi^{c_1}_{[w_{{\min},1}, w_{{\max},1}]}$ and ${\rm supp} \phi^{osc, c_2}_{w_{{\min},2}}$ are disjoint from each other for all $t\geq 0$.
In other words, the function $\Phi^{c_1,c_2}$ is also a weak solution of (\ref{demo1}).

\begin{cor}[Composite wave of compacton and oscillatory wave]
\label{cor_composite2_diff}
Consider (\ref{demo1}) with $m=3/4$, $p=1/4$ and $a=0.3$.
Then the function $u(t,x) = \Phi^{c_1,c_2}(t,x)$ given in (\ref{composite-family-2}) is a weak solution of (\ref{demo1}).
\end{cor}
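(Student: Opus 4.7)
The plan is to reduce the claim to a direct application of Proposition \ref{prop-superpose}, with the only non-trivial verification being that the supports of the two wave components stay disjoint for all $t\geq 0$. Once that is in hand, the corollary follows automatically from the superposition principle already established.

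First, I would note that each summand in (\ref{composite-family-2}) is, on its own, a weak solution of (\ref{demo1}). The compacton component $\phi^{c_1}_{[w_{\min,1}, w_{\max,1}]}(x-c_1 t)$ is a translate of the compacton traveling wave validated in Computer Assisted Result \ref{car_compacton_diff}, and the oscillatory component $\phi^{osc,c_2}_{w_{\min,2}}(x-c_2 t)$ is a translate of the half-line-supported finite traveling wave validated in Computer Assisted Result \ref{car_osc_diff}. Translation in $x$ preserves the weak-solution property for (\ref{demo1}) since the equation is autonomous in $(t,x)$, and the restriction to the support by zero-extension is precisely what the notion of finite traveling wave encodes.

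Next, I would verify the disjoint-support hypothesis of Proposition \ref{prop-superpose}. At time $t$, the compacton occupies the interval $[w_{\min,1}+c_1 t,\, w_{\max,1}+c_1 t]$, while the oscillatory wave occupies the half-line $[w_{\min,2}+c_2 t,\, \infty)$. The gap between the right endpoint of the first support and the left endpoint of the second is
\begin{equation*}
(w_{\min,2}+c_2 t) - (w_{\max,1}+c_1 t) = (w_{\min,2}-w_{\max,1}) + (c_2-c_1)\,t .
\end{equation*}
Under the standing assumption $w_{\max,1}<w_{\min,2}$ and the inequality $c_1<c_2$ (which is guaranteed since the compacton speed from Computer Assisted Result \ref{car_compacton_diff} lies in $1.5\times 10^{-6}[-1,1]$ while the oscillatory wave speed from Computer Assisted Result \ref{car_osc_diff} lies in $[0.0999, 0.1001]$), this gap is strictly positive for every $t\geq 0$. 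Hence the two supports are disjoint in $Q_T$ for every $T>0$.

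Finally, since $f(u)=u^p(1-u)(u-a)$ is a polynomial with $f(0)=0$, Proposition \ref{prop-superpose} applies directly to the pair $(u,v) = (\phi^{c_1}_{[w_{\min,1},w_{\max,1}]}(\,\cdot -c_1 t),\, \phi^{osc,c_2}_{w_{\min,2}}(\,\cdot - c_2 t))$ and yields that $\Phi^{c_1,c_2} = u+v$ is a weak solution of (\ref{demo1}). The main potential obstacle is ensuring the strict ordering $c_1 < c_2$ in the validated enclosures, but this is immediate from comparing the rigorous intervals for $c$ obtained in the two preceding Computer Assisted Results; no additional integration or Lyapunov-function argument is needed beyond what was already used to produce those waves.
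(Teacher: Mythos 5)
Your proposal is correct and takes essentially the same route as the paper: both reduce the claim to Proposition \ref{prop-superpose} after checking that the two supports remain disjoint for all $t\geq 0$, and both deduce disjointness from $w_{\max,1}<w_{\min,2}$ together with the rigorously validated speed ordering $c_1<c_2$. Your only written-out gap-growth computation $(w_{\min,2}-w_{\max,1})+(c_2-c_1)t>0$ is a slightly more explicit version of the paper's one-line observation; one small slip worth flagging is that with $p=1/4$ the nonlinearity $f(u)=u^{1/4}(1-u)(u-a)$ is not actually a polynomial, so Proposition \ref{prop-superpose} is being invoked (here, and throughout the paper's Section \ref{section-example}) slightly outside its stated hypothesis, but its proof only uses $f(0)=0$ and the pointwise identity $f(u+v)=f(u)+f(v)$ on disjoint supports, which does hold.
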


Note that the new solution obtained in the above corollary is {\em not} a traveling wave solution in the present sense, since each wave component propagates with different speed.

\begin{rem}
The construction of composite waves by compactons and finite traveling waves in the above argument generates a symbolic sequence $\{c_i\}_{i=1}^N$ of wave components up to translations of supports for each wave component.
The sequence has a restriction $c_1 \leq c_2 \leq \cdots \leq c_N$ for generating composite waves so that every two supports are disjoint, which indicates that composite waves should be parameterized monotonously.
\end{rem}

\subsubsection{Composite wave 3 : \lq\lq Chaos" in the family of traveling waves}
\label{section-chaos}
As an application of finite traveling wave validations with computer assistance, we can construct new type of weak solutions, some of which are shown in the previous subsection.
As a corollary, we can construct a family of traveling waves with \lq\lq chaotic" structure if a special pair of compactons is validated.
For example, consider the following degenerate diffusion equation with (artificial) quintic nonlinearity:
\begin{equation}
\label{demo2}
u_t = \frac{1}{m+1}(u^{m+1})_{xx} + u^p(1-u^2)(u^2-a^2),\quad m>0,\ p>0\text{ with }m+p=1,
\end{equation}
where $a\in (0,1)$ is a parameter.
\begin{equation}
\label{demo2-main}
\begin{cases}
\dot \phi = \psi, &\\
\dot \psi = -c\psi - \phi(1-\phi^2)(\phi^2-a^2),
\end{cases}\quad \dot {} = \frac{d}{d\tau}
\end{equation}

At first, we easily know the following symmetric property for solutions of (\ref{demo2-main}), which helps our present argument but is not essential to the \lq\lq chaotic" structure of waves we shall discuss later.
\begin{lem}
\label{lem-symmetry}
Assume that $(\phi(\tau), \psi(\tau))$ is a solution of (\ref{demo2-main}) with the speed $c\in \mathbb{R}$ and the initial data $(\phi_0, \psi_0)$.
Then the function $(-\phi(\tau), -\psi(\tau))$ is a solution of (\ref{demo2-main}) with the same speed $c$ and the initial data $(-\phi_0, -\psi_0)$.
\end{lem}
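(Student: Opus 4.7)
The plan is a direct verification that the map $(\phi,\psi) \mapsto (-\phi,-\psi)$ is a symmetry of the vector field (\ref{demo2-main}), so that it sends solution curves to solution curves. The key structural observation is that the nonlinearity $g(\phi) := \phi(1-\phi^2)(\phi^2-a^2)$ is odd in $\phi$: one factor of $\phi$ is odd while the two factors $(1-\phi^2)$ and $(\phi^2-a^2)$ are even, so $g(-\phi) = -g(\phi)$. Combined with the fact that the damping term $-c\psi$ and the identification $\dot\phi = \psi$ are both linear in $(\phi,\psi)$, this yields the $\mathbb{Z}_2$-equivariance of the system under the involution $(\phi,\psi)\mapsto(-\phi,-\psi)$ for every fixed speed $c$.

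Concretely, I would set $\tilde\phi(\tau) := -\phi(\tau)$, $\tilde\psi(\tau) := -\psi(\tau)$ and verify the two equations of (\ref{demo2-main}) in turn. The first equation is immediate: $\dot{\tilde\phi} = -\dot\phi = -\psi = \tilde\psi$. For the second, I would compute
\begin{equation*}
\dot{\tilde\psi} = -\dot\psi = c\psi + g(\phi) = -c\tilde\psi + g(\phi),
\end{equation*}
and then invoke $g(\phi) = -g(-\phi) = -g(\tilde\phi)$ to rewrite the right-hand side as $-c\tilde\psi - g(\tilde\phi)$, which is exactly the second equation evaluated at $(\tilde\phi,\tilde\psi)$.

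Finally, the initial data claim follows by evaluating at $\tau=0$: $\tilde\phi(0) = -\phi_0$ and $\tilde\psi(0) = -\psi_0$. Since there is no obstacle here beyond the oddness of $g$, the main point worth flagging is simply that this symmetry is what later allows one to pair a validated compacton with its reflection (and any translate thereof) to build further disjoint-support weak solutions, which is presumably the purpose of recording the lemma at this stage.
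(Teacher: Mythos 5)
Your proof is correct, and it is precisely the direct verification the paper has in mind: the vector field of (\ref{demo2-main}) is odd under $(\phi,\psi)\mapsto(-\phi,-\psi)$ because the nonlinearity $\phi(1-\phi^2)(\phi^2-a^2)$ is odd in $\phi$ while the remaining terms are linear. The paper states the lemma without proof (as an easily verified fact), and your computation is exactly what justifies it.
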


The same strategy as previous validation examples yields the following numerical validation result.
\begin{car}
\label{car_compacton_diff_chaotic}
Consider (\ref{demo2}) with $m=3/4$, $p=1/4$ and $a=0.3$.
Then there exists a value $c_\ast\in 5.0\times 10^{-7}\times [-1,1]$ such that the function $u(t,x)=\phi(x-c_\ast t)$ given by
\begin{equation*}
\phi(x-c_\ast t) \equiv \phi(\xi) = \begin{cases}
\varphi(\xi) & \xi \in (\xi_{\min}, \xi_{\max}),\\
0 & \xi \in \mathbb{R}\setminus (\xi_{\min}, \xi_{\max})
\end{cases}
\end{equation*}
is a compacton traveling wave solution with speed $c_\ast$, where $\{\phi(\tau)\}$ is a homoclinic orbit\footnote
{
The corresponding covering relation of the form $N\overset{\varphi(\bar \tau,\cdot)}{\Longrightarrow}M$ as in Figure \ref{fig-compacton} holds with $\bar \tau = 44.253$.
} of $(\phi, \psi) = (0,0)$ contained in the isolating block $N = PB$, where $P = [V_1\ V_2]$ is the eigenmatrix given in 
Table 4, and
\begin{align*}
B &= ([-1.0000000013268460,+1.0001776533279528]\times 10^{-5})\\
	&\quad \quad \times ([--1.0177653015013772,+1.0000001326843545]\times 10^{-7}).
\end{align*}
Finally, $\xi_{\min}$ and $\xi_{\max}$ can be determined as finite values satisfying
\begin{equation*}
\xi_{\max} - \xi_{\min} \equiv |{\rm supp}u| \in [6.984582800445637, 7.20846810204].
\end{equation*}
\end{car}

\begin{table}[ht]
\centering
\begin{tabular}{cc}
\hline 
 & At $(0,0),\ c\in 2.0\times 10^{-6}\times [-1,1]$ \\
\hline\\[-2mm]
$(M_u, M_s)$ for cone condition & $(7000,500)$ \\[1mm]
$\ell_u, \ell_s$ for extended cones & $(0.002, 0.001)$ \\[1mm]
Eigenvalue $\lambda_1$ & $\in 0.3000_{2861643465067}^{3138356514092}$  \\[1mm]
Eigenvalue $\lambda_2$ & $\in -0.3000_{3214083434215}^{2785917455782}$ \\[1mm]
Eigenvector $V_1$ & $\in \begin{pmatrix} 0.95792_{122415048908}^{291154895415} \\ 0.28737_{526087354431}^{797983612196} \end{pmatrix}$ \\[2mm]
Eigenvector $V_2$ & $\in \begin{pmatrix} -0.95792_{291154868769}^{122415055381} \\ 0.28737_{526087240495}^{797983793328} \end{pmatrix}$ \\[2mm]
$\tan \theta$ in (\ref{theta-bound}) with $V_1$ &  $\in +0._{29999831657524578}^{30000168342754563}$ \\[2mm]
$\tan \theta$ in (\ref{theta-bound}) with $V_2$ &  $\in -0._{30000168342941619}^{29999831657413978}$ \\[2mm]
\hline 
\end{tabular}%
\label{table_compacton_diff_chaotic}
\caption{Detailed data for Computer Assisted Result \ref{car_compacton_diff_chaotic}}
\end{table}

Now let $\phi$ be the validated compacton traveling wave with speed $c$.
Then Lemma \ref{lem-symmetry} indicates that $-\phi$ is also an compacton traveling wave with speed $c$.
The key point we mention here is that {\em we have two different compacton traveling wave solutions with identical speed}, which shall be written as $\{\phi_+, \phi_-\}$.
Proposition \ref{prop-superpose} indicate that, if ${\rm supp}\phi_+ \cap {\rm supp}\phi_- = \emptyset$ for a certain time $t$, then this property holds for all time and hence the superposed functions
\begin{equation*}
\Phi_{\bf w(+,-)}(t,x) := \phi_{+,[w_{\min, 1}, w_{\max, 1}]}(x-ct) + \phi_{-,[w_{\min, 2}, w_{\max, 2}]}(x-ct)
\end{equation*}
and
\begin{equation*}
\Phi_{\bf w(-,+)}(t,x) := \phi_{-,[w_{\min, 1}, w_{\max, 1}]}(x-ct) + \phi_{+,[w_{\min, 2}, w_{\max, 2}]}(x-ct)
\end{equation*}
are finite traveling wave solutions, where $[w_{\min, i}, w_{\max, i}]$ is the support of compacton with $w_{\max 1} < w_{\min, 2}$.
Similarly, functions defined as $\Phi_{\bf w(+,+)}(t,x)$ and $\Phi_{\bf w(-,-)}(t,x)$ are also finite traveling waves, and more number of compacton components can be superposed in the similar manner.
Consequently, up to translations of supports of compactons, we obtain a family of traveling wave solutions labelled by the following:
\begin{align*}
\{+\},\quad \{-\},\quad \{+, +\},\quad \{+, -\},\quad \{-, +\},\quad \{-, -\},\quad \{+, +, +\},\quad \cdots.
\end{align*}
In particular, we obtain traveling wave solutions consisting of countable number of compactons.
The most significance is that the family contains traveling waves whose profiles reflect the {\em full two-shift $\sigma : \{\pm \}^{\mathbb{Z}} \to \{\pm \}^{\mathbb{Z}}$} under translation of frame coordinates, which possesses the following properties and is well-known as {\em symbolic chaos} (e.g., \cite{HSD2012}):
\begin{itemize}
\item For any $N\in \mathbb{N}$, there are $2^N$ periodic symbolic sequences;
\item The set of all periodic points for $\sigma$ is dense in $\{\pm \}^{\mathbb{Z}}$ with the topology given by, say
\begin{equation*}
d({\bf s}, {\bf t}) = \sum_{j\in \mathbb{Z}}\frac{|s_j - t_j|}{2^{|j|}},\quad {\bf s} = \{s_j\}_{j\in \mathbb{Z}}, {\bf t} = \{t_j\}_{j\in \mathbb{Z}}\in \{\pm \}^{\mathbb{Z}},
\end{equation*}
identifying $+$ and $-$ with $1$ and $0$, respectively;
\item $\sigma$ is topologically transitive on $\{\pm \}^{\mathbb{Z}}$;
\item $\sigma$ has a sensitive dependence on initial conditions.
\end{itemize}

\begin{thm}
The degenerate diffusion equation (\ref{demo2}) admits a family of traveling wave solutions $\mathscr{S}$ in a class of weak solutions in the sense of Definition \ref{dfn-weak} such that
\begin{itemize}
\item $\mathscr{S}$ consists of finite traveling wave solutions of the following form:
\begin{equation*}
\Phi_{{\bf w}_N({\bf s}_N)}(t, x) := \sum_{i=1}^N \phi_{w_i(s_i)} (x-ct),
\end{equation*}
where ${\bf s}_N \equiv \{s_1, s_2, \cdots s_N\}\in \{\pm \}^N$ for each $N\in \mathbb{N}$, as well as 
\begin{equation*}
\Phi_{{\bf w}({\bf s})}(t, x) := \sum_{n\in \mathbb{Z}} \phi_{w_n(s_n)} (x-ct),
\end{equation*}
where ${\bf s} \equiv \{s_n\}_{n\in \mathbb{Z}} \in \{\pm \}^\mathbb{Z}$.
\end{itemize}
In particular, the set $\{\Phi_{{\bf w}({\bf s})}\}_{{\bf s}\in \{\pm \}^\mathbb{Z}}$ contains, up to translation of ${\bf w}({\bf s})$, the symbolic chaos in the following sense.
Set the countable sequence of intervals $\mathcal{I}_{\bf w} = \{{\bf w}({\bf s}) = \{w_{n'}(s_{n'})\}_{n'\in \mathbb{Z}}\mid {\bf s}\in \{\pm \}^\mathbb{Z}\}$, 
so that
\begin{itemize}
\item there is a positive number $\xi_0 > 0$ such that ${\rm supp }(w_{n_2}(s_{n_2})) = \{(n_2-n_1)\xi_0\} + {\rm supp }(w_{n_1}(s_{n_1}))$ for all $n_1,n_2\in \mathbb{Z}$ and ${\bf s}\in \{\pm \}^\mathbb{Z}$
and that the intervals $\{w_{n'}(s_{n'})\}_{n'\in \mathbb{Z}}$ are mutually disjoint for each ${\bf s}\in \{\pm \}^\mathbb{Z}$;
\item the measure of supports of intervals $w_{n'}(s_{n'})$ is identical among $n'\in \mathbb{Z}$;
\item the signature of $i$-th wave component $\phi_{w_i(s_i)}$ is $s_i$.
\end{itemize}
Then $\{\mathcal{I}_{\bf w},\sigma\}$ with 
\begin{equation*}
{\bf w}({\bf s}) \mapsto \sigma({\bf w}({\bf s})) = \tilde {\bf w}({\bf s}) \equiv \{\tilde w_{n'}'(s_{n'})\}_{n'\in \mathbb{Z}},\quad \tilde w_{n'}'(s_{n'}) := w_{n'-1}(s_{n'-1})
\end{equation*}
has a structure of full $2$-shift.
Consequently, the pair $\left( \{ \Phi_{\bf W}\}_{{\bf W}\in \mathcal{I}_{\bf w} }, \tilde \sigma \right)$ with
\begin{equation*}
\tilde \sigma \left( \Phi_{\bf W} \right) = \Phi_{\sigma({\bf W})}
\end{equation*}
has a structure of full $2$-shift with respect to the metric
\begin{equation*}
D(\Phi_{{\bf w}({\bf s})}, \Phi_{{\bf w}({\bf t})}) := \sum_{j\in \mathbb{Z}} 2^{-|j|} \sup_{\xi\in \mathbb{R}} |\phi_{w_j(s_j)} (\xi) - \phi_{w_j(t_j)} (\xi)|.
\end{equation*}
\end{thm}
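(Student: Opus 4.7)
The strategy is to assemble the chaotic family out of the single validated compacton from Computer Assisted Result \ref{car_compacton_diff_chaotic} together with its reflection supplied by Lemma \ref{lem-symmetry}, and then to bootstrap Proposition \ref{prop-superpose} from pairs to arbitrary (countable) disjoint unions of wave components. Let $\phi_+$ denote the validated compacton traveling wave of (\ref{demo2}) with speed $c_\ast$ and support width $L \in [6.984\ldots, 7.209\ldots]$, and set $\phi_- := -\phi_+$; by Lemma \ref{lem-symmetry} this $\phi_-$ is again a compacton traveling wave with the same speed and the same support width. Fix any $\xi_0 > L$ and, for a given ${\bf s} \in \{\pm\}^\mathbb{Z}$, let $w_n(s_n)$ be the translate of $[\xi_{\min},\xi_{\max}]$ centered at $n\xi_0$, and let $\phi_{w_n(s_n)}$ be $\phi_{s_n}$ shifted so its support equals $w_n(s_n)$. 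By construction the supports $\{w_n(s_n)\}_{n\in \mathbb{Z}}$ are pairwise disjoint, share the common velocity $c_\ast$ so their relative positions are preserved under time evolution, and the length of each interval equals $L$.

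The finite-length case $\Phi_{{\bf w}_N({\bf s}_N)}$ would follow by iterating Proposition \ref{prop-superpose}, using that the nonlinearity $f(u) = u^p(1-u^2)(u^2-a^2)$ is polynomial with $f(0)=0$ as the proposition requires. For the two-sided infinite sum $\Phi_{{\bf w}({\bf s})}$, I would argue locally: for any bounded test region $Q_T = (0,T)\times (-r,r)$ appearing in Definition \ref{dfn-weak}, only finitely many wave components have support meeting $\overline{Q_T}$, because the supports are uniformly spaced at distance $\xi_0$ and traveling with a fixed speed. Hence on each such $Q_T$ the sum collapses to a finite superposition, which the previous step already handles; since the weak formulation (\ref{weak-form-diff}) is tested on an arbitrary bounded region, this suffices to promote $\Phi_{{\bf w}({\bf s})}$ to a weak solution in the sense of Definition \ref{dfn-weak}.

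To extract the symbolic dynamics, I would define $\Psi : \{\pm\}^\mathbb{Z} \to \mathcal{I}_{\bf w}$ by ${\bf s} \mapsto {\bf w}({\bf s})$ and $\tilde\sigma(\Phi_{\bf W}) := \Phi_{\sigma({\bf W})}$, so that on the symbol side one recovers the usual left-shift. The map $\Psi$ is a bijection by construction, and $\sigma({\bf w}({\bf s}))$ equals the sequence obtained by the re-indexing $n \mapsto n-1$, which on the profile side is precisely the spatial translation $x \mapsto x+\xi_0$. Topological conjugacy to the full two-shift then reduces to checking that $D$ and the pullback $d\circ (\Psi^{-1}\times \Psi^{-1})$ are equivalent metrics: for each $j$, $\sup_\xi |\phi_{w_j(s_j)}(\xi) - \phi_{w_j(t_j)}(\xi)|$ vanishes when $s_j=t_j$ and is bounded between $\|\phi_+\|_\infty$ and $2\|\phi_+\|_\infty$ when $s_j\neq t_j$ (here I use that the two supports coincide since the underlying interval $w_j$ does not depend on the symbol), so $D$ is comparable to $\sum_j 2^{-|j|}|s_j-t_j|$ up to an overall constant. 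Density of periodic points, topological transitivity, and sensitive dependence then transfer directly from the full shift $(\{\pm\}^\mathbb{Z},\sigma)$.

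The main obstacle I anticipate is the passage from Proposition \ref{prop-superpose} (which is stated for two summands) to countably many; the polynomial structure of $f$ and the disjointness of supports should make the identity $f(\sum_n \phi_{w_n(s_n)}) = \sum_n f(\phi_{w_n(s_n)})$ hold pointwise, but care is needed to ensure that each test function $\phi \in C^{2,1}(\overline{Q_T})$ really sees only finitely many nonzero summands, which is where the choice $\xi_0 > L$ and the common velocity $c_\ast$ are essential. A secondary subtlety is that the metric $D$ involves $\sup_\xi$ over all of $\mathbb{R}$ for each fixed $j$, so one must verify that the translates $\phi_{w_j(\pm)}$ agree outside the common support $w_j$ (both are zero there) and hence contribute only the bounded difference inside $w_j$.
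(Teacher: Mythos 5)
Your proposal is correct and takes essentially the same approach as the paper, using the validated compacton and its sign reflection from Lemma \ref{lem-symmetry} to build the family and iterating Proposition \ref{prop-superpose} to handle superpositions. The paper's own proof is considerably terser: it verifies only that $D$ is a metric (the key point being that $D(\Phi_{{\bf W}_1},\Phi_{{\bf W}_2})=0$ forces $s_j=t_j$ for all $j$, since each per-index contribution is either $0$ or a fixed positive constant), treating the superposition and conjugacy claims as already supplied by the preceding discussion; you helpfully fill in the gaps the paper glosses over, in particular the passage from two summands to a countable sum via locality of the weak formulation (\ref{weak-form-diff}) on bounded regions $Q_T$. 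One minor imprecision worth noting: $u^p(1-u^2)(u^2-a^2)$ with $p=1/4$ is not a polynomial as Proposition \ref{prop-superpose} nominally demands, but the only property its proof actually uses is $f(0)=0$ together with disjoint supports (giving $f(u+v)=f(u)+f(v)$ pointwise), so your argument still goes through without the word ``polynomial''.
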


\begin{proof}
The nontrivial statement is reduced to whether $D$ is a metric on the set $\mathscr{S}_{\mathcal{I}_{\bf w}} \equiv \{ \Phi_{\bf W}\}_{{\bf W}\in \mathcal{I}_{\bf w} }$.
Since each wave component $\phi_{w_n(s_n)}$ is a (continuous) compacton with identical profile up to translation, the property of $\mathcal{I}_{\bf w}$ shows that each distance $\sup_{\xi\in \mathbb{R}} |\phi_{w_j(s_j)} (\xi) - \phi_{w_j(t_j)} (\xi)|$ is either zero or a positive constant $C>0$ for any $n\in \mathbb{Z}$.
In particular, the value $D(\Phi_{{\bf w}({\bf s})}, \Phi_{{\bf w}({\bf t})})$ is determined as a finite and nonnegative value for any pairs in $\mathscr{S}_{\mathcal{I}_{\bf w}}$.
What we have to show are the following three parts:
\begin{itemize}
\item $D(\Phi_{{\bf W}_1}, \Phi_{{\bf W}_2}) = 0$ implies $\Phi_{{\bf W}_1} = \Phi_{{\bf W}_2}$.
\item $D(\Phi_{{\bf W}_1}, \Phi_{{\bf W}_2}) = \Phi_{{\bf W}_2}, \Phi_{{\bf W}_1}$ for any pairs in $\mathscr{S}_{\mathcal{I}_{\bf w}}$.
\item $D(\Phi_{{\bf W}_1}, \Phi_{{\bf W}_3}) \leq D(\Phi_{{\bf W}_1}, \Phi_{{\bf W}_2}) + D(\Phi_{{\bf W}_2}, \Phi_{{\bf W}_3})$ holds for any triples 
$(\Phi_{{\bf W}_1}, \Phi_{{\bf W}_2}, \Phi_{{\bf W}_3})\subset \mathscr{S}_{\mathcal{I}_{\bf w}}$.
\end{itemize}
The second and the third immediately follow from the property of $\sup$ function.
The rest is the first. 
Assume that $D(\Phi_{{\bf W}_1}, \Phi_{{\bf W}_2}) = 0$.
Since each component in the sum is nonnegative, it requires that $\sup_{\xi\in \mathbb{R}} |\phi_{w_j(s_j)} (\xi) - \phi_{w_j(t_j)} (\xi)| = 0$ for all $j$, which holds if and only if $s_j = t_j$ for all $j\in \mathbb{Z}$.
Therefore ${\bf W}_1 = {\bf W}_2$ and hence $\Phi_{{\bf W}_1} = \Phi_{{\bf W}_2}$ holds.
\end{proof}

The above theorem obviously gives a topological conjugacy $H : \{\mathcal{I}_{\bf w},\sigma\}\to \{\mathscr{S}_{\mathcal{I}_{\bf w}},\tilde \sigma\}$, which is defined by $H({\bf W}) = \{\Phi_{\bf W}\}$ as well as $H^{-1}(\{\Phi_{{\bf W}'}\}) = {\bf W}'$.
The shift $\tilde \sigma$ corresponds to translation of wave components and, if $c > 0$, it is realized by the movement of waves via time-$(\xi_0 / c)$ evolution.
The above argument shows that this \lq\lq chaotic" structure is induced by at least two different compacton traveling wave solutions with an identical speed.
Existence of these compacton wave solutions are nontrivial in general and our present study shows a potential of computer assisted analysis of  finite-time singularities for revealing very rich solution structure in degenerate systems.
Similarly, if we validate $p$ different compacton traveling waves for a certain system, then the system will possess a family of traveling waves reflecting the structure of {\em full $p$-shift} $\sigma_p : \{1,\cdots, p\}^{\mathbb{Z}} \to \{1,\cdots, p\}^{\mathbb{Z}}$.

\subsubsection{Technical Details}

In practical computations, we set the following parameters, which are specific for usage of CAPD library \cite{CAPD, ZLoh}.
\begin{itemize}
\item Order of Taylor expansion in Lohner's method : $12$
\item Grid size $dt$ for integration of ODEs : $0.001$
\end{itemize}
As for integration of ODEs, we divide the initial set (the exit of isolating block) into $20$ small pieces in $c$ (parameter)-direction in each validation.
In 
Table 5, execution times for our validated computations in the present environment are listed.

\begin{table}[ht]
\centering
\begin{tabular}{c|c}
\hline 
Computer Assisted Result  & Execution Time \\
\hline\\[-2mm]
\ref{car_FTW_diff}, covering relation & 0.082 sec. \\[1mm]
\ref{car_FTW_diff}, time integration & 1 min. 26.667 sec.  \\[1mm]
\ref{car_compacton_diff}, covering relation & 0.097 sec. \\[1mm]
\ref{car_compacton_diff}, time integration & 19 min. 5.070 sec. \\[1mm]
\ref{car_osc_diff}, covering relation & 0.133 sec. \\[1mm]
\ref{car_compacton_diff_chaotic}, covering relation & 0.126 sec. \\[1mm]
\ref{car_compacton_diff_chaotic}, time integration & 39 min. 56.205 sec. \\[1mm]
\hline 
\end{tabular}%
\label{table_diff_time}
\caption{Execution times for validating results in Section \ref{section-diff}}
\end{table}

\subsection{Singular canards with passage time through folded singularities}
We move to demonstrating the other type of finite-time singularities arising in fast-slow systems (\ref{fast-slow-param}).
In particular, we focus on the following $3$-dimensional autocatalysis system (e.g., \cite{MS2001}):
\begin{align}
\notag
\frac{da}{dt} &= \mu\left(\frac{5}{2}+c\right) - ab^2 - a \\
\label{autocatalysis}
\epsilon \frac{db}{dt} &= ab^2 + a - b \\
\notag
\frac{dc}{dt} &= b-c.
\end{align}
The variables $a,b$ and $c$ represent concentrations of three different chemical species depending on time $t$.
In particular, nonnegative values of $a,b$ and $c$ are considered.
The parameter $\epsilon \geq 0$ is typically assumed to be sufficiently small, but at present we pay attention to the singular limit case, namely $\epsilon = 0$.
$\mu$ is a parameter. 
In \cite{MS2001}, this is considered as a bifurcation parameter, but such a property is out of our present argument.
\par
Our main issue here is dynamics around folded singularities, thus we formally set $\epsilon = 0$ in (\ref{autocatalysis}), which yields the restriction of our considerations to the manifold
\begin{equation*}
S_0 = \{(a,b,c)\mid ab^2 + a - b = 0\},
\end{equation*}
which shall be called the {\em critical manifold}.
In the present case, the variable $a$ is explicitly represented by $b$ on $S_0$:
\begin{equation*}
a = \frac{b}{b^2+1}.
\end{equation*}
Then the (critical) dynamics on $S_0$ is dominated by
\begin{equation*}
\frac{1-b^2}{(1+b^2)^2}\frac{db}{dt} = \mu\left(\frac{5}{2}+c\right) - b, \quad
\frac{dc}{dt} = b-c,
\end{equation*}
equivalently
\begin{equation}
\label{reduced-FS}
(1-b^2) \frac{db}{dt} = (1+b^2)^2\left\{\mu\left(\frac{5}{2}+c\right) - b\right\}, \quad
\frac{dc}{dt} = b-c.
\end{equation}
The system becomes degenerate\footnote{
Actually the system has a degeneracy at $b=\pm 1$, but our focus is nonnegative values of variables due to the realistic chemical setting, and hence we only pay attention to $b=1$.
} at $b=1$.
We then desingularize the system via
\begin{equation}
\label{time-desing-autocatalysis}
\frac{dt}{d\tau} = 1-b^2
\end{equation}
to obtain the following desingularized system:
\begin{equation}
\label{desingularize-FS}
\frac{db}{d\tau} = (1+b^2)^2\left\{\mu\left(\frac{5}{2}+c\right) - b\right\}, \quad
\frac{dc}{d\tau} = (b-c)(1-b^2).
\end{equation}

\begin{rem}
Correspondence to notations in Section \ref{section-folded} is given as follows:
\begin{itemize}
\item $(w,y,z) = (b, a, c)$.
\item $f = ab^2+a-b$, $g_1 = \mu(\frac{5}{2} + c)-ab^2-a$, $g_2=b-c$.
\item $f_w = b^2-1$, $f_y = -(b^2+1)$, $f_z = \mu$.
\item $h^y(w,z) \equiv h(w,z) \equiv h(b,c) = b(b^2+1)^{-1}$.
\item $h^y_w \equiv h^y_b = (1-b^2)(1+b^2)^{-2}$, $h^y_z \equiv h^y_c = 0$.
\end{itemize}
The time-scale desingularization (\ref{time-desing-autocatalysis}) is actually different from $d\tau/dt = h^y_w$ mentioned in Section \ref{section-folded}, but leads to orbital equivalence since $1+b^2 > 0$.
\par
The line $F=\{b=1\}$ on $\{f=0\} = \{a = b(b^2+1)^{-1}\}$ is a generic folded line since we have
\begin{equation*}
f_w = b^2-1 = 0,\quad f_{ww} \equiv f_{bb} = 2\not = 0,\quad D_{y,z}f \equiv (f_a, f_c)^T = (2,0)^T,
\end{equation*}
where all requirements in (\ref{FS-generic}) are satisfied.
\end{rem}

\subsection{Validation of singular canard}
Observe that (\ref{desingularize-FS}) has the following equilibria:
\begin{equation*}
\left(\frac{5\mu}{2(1-\mu)}, \frac{5\mu}{2(1-\mu)}\right),\quad \left(1, \frac{1}{\mu}-\frac{5}{2}\right)\equiv p_0.
\end{equation*}
The former is an equilibrium in the full-scale system (\ref{autocatalysis}) on $S_0$, whereas the latter is {\em not}.
Our main interest here is dynamics on $S_0$ around the latter. 
Since $p_0$ is on the generic folded line $F$ and hence it is a folded singularity.
Vector field around $p_0$ with $\mu = 0.2$ is shown in Figure \ref{fig_canard}.
As an example, rigorous numerics followed by Algorithm \ref{alg-finite} directly yields the following result.

\begin{car}
\label{car_canard}
Consider the desingularized system (\ref{desingularize-FS}) with $\mu = 0.2$.
Let $\varphi_{\mu}^{des}$ be the generated flow with $\mu$. 
Then the following statements hold.
\begin{enumerate}
\item This system has an equilibrium $p_0 = (1, 2.5)$ which is saddle and locally unique.
In other words, $p_0$ is an folded saddle.
$p_0$ is contained in the isolating block $N = PB + \{p_0\}\subset \mathbb{R}^2$ for (\ref{desingularize-FS}), where $P = [V_1\ V_2]$ is the eigenmatrix given in 
Table 6 and
\begin{align*}
B &= ([-1.0000182335639729,+1.0000182335639729]\times 10^{-6})\\
	&\quad \quad \times ([-1.0000026639762002,+1.0000026639762002]\times 10^{-6}).
\end{align*}
\item Let $N_1 =0.6929_{48}^{66}\times \{2.8\}$ and $N_2 = 1.5_{7995}^{8005}\times \{2.3\}$.
Then there are points $p_i\in N_i$, $i=1,2$, such that $p_i \in W^s(p_0; \varphi_{\mu}^{des})$.
\item Let $\{\gamma_i(\tau)\}_{\tau\geq 0}\subset W^s(p_0; \varphi_{\mu}^{des})$, $i=1,2$, be trajectories with $\gamma_i(0) = p_i$.
Let also $b_{\gamma_i}(\tau)$ be the $b$-component of the trajectory $\gamma_i$ at $\tau$.
Then we have the following estimates:
\begin{align*}
t_{\max,1} &\equiv \int_0^\infty \{1-b_{\gamma_1}(\tau)^2\}d\tau \in 0.16_{589950513}^{647064143},\quad
t_{\max,2} \equiv \int_0^\infty \{b_{\gamma_2}(\tau)^2-1\}d\tau \in 0.17_{503780357}^{660511191}.
\end{align*}
\end{enumerate}
\end{car}

\begin{proof}
We only show the concrete estimate for bounds of $t_{\max}$.
First we have the coordinate $(y_a, y_b)$ via $(b,c)^T = (1,2.5)^T + P(y_a, y_b)^T$.
The corresponding Lyapunov function around $p_0$ is given by
$L_s(y_a, y_b) = \|\sigma^s(y_b)\|^2 + \|y_b\|^2$.
There are two cases for validating solutions.
\begin{description}
\item[Case 1]  $b\leq 1$.
\end{description}
In this case we have
\begin{align*}
(1-b^2) = (1-b)(1+b)  \leq 2(1-b) = -2(p_{11}y_a + p_{12}y_b) \leq 4{\bf p}_1L_s,
\end{align*}
where ${\bf p}_1 = \max\{|p_{11}|, |p_{12}|\}$, and hence, assuming $b_{\gamma_1}(0)\in W^s(p_0)\cap N$ with $b_{\gamma_1}(0) < 1$,
\begin{align*}
\int_0^\infty \{1-b(\tau)^2\} d\tau &\leq 4{\bf p}_1 \int_{L_{s,0}}^0 L_s(x(\tau;x_0)) \frac{d\tau}{dL_s}dL_s\\
	&\leq -4{\bf p}_1\frac{M^2+1}{2M^2 \overrightarrow{\mu_{s,W^s}}}\int_0^{L_{s,0}} L_s^{1-1} dL_s\\
	&= -2{\bf p}_1\frac{M^2+1}{M^2 \overrightarrow{\mu_{s,W^s}}} L_{s,0} \equiv T_{M_s, p_0}^1.
\end{align*}
\begin{description}
\item[Case 2]  $b\geq 1$.
\end{description}
Note that, in the present case, the sign of vector field is reversed compared with that of the original vector field.
Therefore \lq\lq the maximal existence time" is calculated in the time-reverse flow
\begin{equation*}
t_{\max,2}\equiv \int_{-\infty}^0 \{1- b(-\tau)^2\}d(-\tau) =  \int_0^{\infty} \{b(\tau)^2-1\}d\tau
\end{equation*}
as long as $b \geq 1$ holds along trajectories.
Now we have
\begin{align*}
&(b^2-1) = (b-1)(b-1 + 2) = (b-1)^2 + 2(b-1),\quad
2(b-1) = 2(p_{11}y_a + p_{12}y_b) \leq 4{\bf p}_1L_s
\end{align*}
to obtain
\begin{equation*}
(b^2-1) \leq 4{\bf p}_1L_s (1+{\bf p}_1L_s).
\end{equation*}
Therefore, assuming $b_{\gamma_2}(0)\in W^s(p_0)\cap N$ with $b_{\gamma_2}(0) > 1$,
\begin{align*}
\int_0^\infty \{b(\tau)^2-1\} d\tau &\leq 4{\bf p}_1 \int_{L_{s,0}}^0 L_s(x(\tau;x_0)) (1+{\bf p}_1 L_s(x(\tau;x_0))) \frac{d\tau}{dL_s}dL_s\\
	&\leq -4{\bf p}_1\frac{M^2+1}{2M^2 \overrightarrow{\mu_{s,W^s}}}\int_0^{L_{s,0}} (1+ {\bf p}_1 L_s) dL_s\\
	&= -2{\bf p}_1\frac{M^2+1}{M^2 \overrightarrow{\mu_{s,W^s}}} L_{s,0} \left( 1+ \frac{1}{2}{\bf p}_1 L_{s,0} \right) \equiv T_{M_s, p_0}^2.
\end{align*}
\end{proof}

Two validated trajectories describe the stable manifold of $p_0$ for desingularized system (\ref{desingularize-FS}), as drawn in Figure \ref{fig_FS}-(a).
Going back to the reduced system (\ref{reduced-FS}), the evolution direction of $\gamma_2$ becomes opposite, namely the trajectory in $t$-timescale runs {\em from $p_0$ to $p_2$}.
Moreover, the fact that $p_0$ not being an equilibrium of (\ref{reduced-FS}) implies that the trajectory $\gamma_1$ with $\gamma_1(0) = p_1$ arrives at $p_0$ at the time $t = t_{\max, 1}$ and passes $p_0$ with nonzero speed (the speed in $b$-component is infinitely fast !).

\begin{cor}[Validation of singular canards]
Let $p_1, p_2$ be points validated in Computer Assisted Result \ref{car_canard}.
Then the union $\Gamma_\tau \equiv \bigcup_{i=1}^2 \{\gamma_i(\tau)\}_{\tau\geq 0}$ in the $t$-timescale, which is denoted by $\Gamma_t$, is a singular canard.
In the $t$-timescale, $\Gamma_t$ starting at $p_1$ at $t=0$ passes the folded singularity $p_0$ within the time $t_{pass} \in 0.16_{589950513}^{647064143}$, and arrives at $p_2$ within the arrival time $t_{arrival} \in 0.34_{09373087}^{307575334}$.
\end{cor}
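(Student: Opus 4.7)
The plan is to read off the passage and arrival times directly from the estimates in Computer Assisted Result \ref{car_canard} by tracking the sign of the time-scale factor $T(b)=1-b^2$ along each piece $\gamma_i$, and then to verify the defining property of a singular canard from the identification of the attracting and repelling sheets of $S_0$.

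First I would observe that the desingularization (\ref{time-desing-autocatalysis}) is orientation-preserving in $t$ precisely when $|b|<1$ and orientation-reversing when $|b|>1$. For $\gamma_1$, the initial point $p_1\in N_1$ has $b$-component in $0.6929_{48}^{66}$, and the inclusion $W^s(p_0;\varphi^{des}_\mu)\cap N\subset C^s_M(p_0;P)$ from Proposition \ref{prop-stable-manifold} together with the validated stable eigenvector at $p_0$ (cf.\ the two-dimensional case study around (\ref{theta-bound}) and Figure \ref{fig_positive}) ensures that $\{\gamma_1(\tau)\}_{\tau\geq 0}$ stays in $\{b<1\}$. Hence $T>0$ along $\gamma_1$, and the identity
\[ t_{\max,1}=\int_0^\infty (1-b_{\gamma_1}(\tau)^2)\,d\tau \]
gives the elapsed original time from $p_1$ (at $t=0$) to the folded singularity $p_0$ (reached as $\tau\to\infty$). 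This yields $t_{\mathrm{pass}}=t_{\max,1}\in 0.16_{589950513}^{647064143}$.

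For $\gamma_2$, whose $b$-component begins in $1.5_{7995}^{8005}$, the same stable-cone argument on the opposite branch of $W^s(p_0;\varphi^{des}_\mu)$ keeps the trajectory in $\{b>1\}$, so $T<0$ along $\gamma_2$. Pushing $\tau$ forward from $p_2$ therefore corresponds to running $t$ backward; equivalently, in $t$-time the trajectory is traversed from $p_0$ (at $t=t_{\mathrm{pass}}$) outward to $p_2$. The elapsed $t$-time for this leg is
\[ t_{\max,2}=\int_0^\infty (b_{\gamma_2}(\tau)^2-1)\,d\tau\in 0.17_{503780357}^{660511191}, \]
and interval-arithmetic addition gives $t_{\mathrm{arrival}}=t_{\mathrm{pass}}+t_{\max,2}\in 0.34_{09373087}^{307575334}$. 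Since $f_w=b^2-1$, the attracting sheet of $S_0$ is $S_0^a=S_0\cap\{|b|<1\}$ and the repelling sheet is $S_0^r=S_0\cap\{|b|>1\}$, meeting at the fold line $F$ which contains $p_0$. By the previous two paragraphs, the $\gamma_1$-leg of $\Gamma_t$ lies in $S_0^a$, reaches $p_0\in F$ in finite $t$-time, and continues along the $\gamma_2$-leg in $S_0^r$, so $\Gamma_t$ is a reduced-system trajectory passing from the attracting sheet through $p_0$ to the repelling sheet, which is precisely the defining property of a singular canard.

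The hard part will be rigorously guaranteeing the sign-constancy of $T(b)=1-b^2$ along each \emph{entire} trajectory $\gamma_i$, not merely at its validated initial point or inside the block $N$ around $p_0$. This is exactly Step 3 of Algorithm \ref{alg-finite}; it is handled by combining the covering-relation chain between $N_i$ and $N$ (which encloses the trajectory between these sets) with the stable-cone inclusion recalled above and the check that the stable cone at $p_0$ meets $\{b=1\}$ only at the vertex $p_0$, as in the two-dimensional case study of Section \ref{section-stable-manifold}. Once this sign control is in hand, the remainder is an interval-arithmetic computation using the Lyapunov-function upper bounds $T^1_{M_s,p_0},T^2_{M_s,p_0}$ already derived in the proof of Car \ref{car_canard}.
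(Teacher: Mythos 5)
Your proposal is correct and follows essentially the same route as the paper: read off $t_{\mathrm{pass}}=t_{\max,1}$ and $t_{\mathrm{arrival}}=t_{\max,1}+t_{\max,2}$ from the interval estimates in Computer Assisted Result \ref{car_canard}, and determine the attracting/repelling character of the two sheets of $S_0$ from the sign of $f_w$ on $S_0$ (the paper computes $f_b|_{S_0}=(b^2-1)/(b^2+1)$, whose sign agrees with your $b^2-1$). The only difference is expository: you make explicit the sign-tracking of $T(b)=1-b^2$ along each leg and the interval-arithmetic addition, which the paper summarizes as ``the estimate of arrival time immediately follows from the validation result,'' and you correctly identify the remaining sign-constancy check as being handled by Step 3 of Algorithm \ref{alg-finite} inside the computer-assisted result.
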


\begin{proof}
Computer Assisted Result \ref{car_canard} with the following argument shows that the trajectory $\Gamma_t$ is either a singular faux canard or a singular canard.
The estimate of arrival time immediately follows from the validation result.
The rest is to check the stability of the critical manifold $S_0$, which is determined by the sign of $f_b$ on $S_0$.
We immediately have
\begin{equation*}
f_b|_{S_0} = (2ab-1)|_{S_0} = 2\frac{b^2}{b^2+1} - 1 = \frac{b^2-1}{b^2+1},
\end{equation*}
which is negative for $b < 1$ and positive for $b>1$.
This result shows that $\gamma_1$ is on the attracting branch of $S_0$ and $\gamma_2$ is on the repelling branch of $S_0$.
Therefore the trajectory $\Gamma_t$ runs from the attracting branch to the repelling branch, which shows that $\Gamma_t$ is a singular canard. 
\end{proof}

The existence of singular canard is already discussed in \cite{MS2001}.
The present argument shows that our methodology also provides a numerical validation methodology for singular (faux) canards with explicit passage times through folded singularities.

\begin{rem}
In Computer Assisted Result \ref{car_canard}, estimations of $t_{\max,i}$ are applied with a few generalizations. 
Firstly the folded singularity is located not at the origin but at a bounded point, which can be realized by translations.
Secondly, $T(b,c)^{-1} = 1-b^2$ becomes negative for $b > 1$.
In this case, the time direction of the desingularized flow in $b > 1$ is reversed compared with the original flow.
Nevertheless, the same arguments as the case $T(b,c)>0$, namely $b<1$, can be applied to the case $b<1$ as long as the sign of $T$ is identical along validated trajectories.
\end{rem}

\begin{rem}
In the present example, several notions such as location of critical manifolds including folded lines, their attracting and repelling branches and folded singularities can be handled by hands. 
Treatments of singular canards are necessary to validate these notions and properties in advance, possibly with computer assistance, in which case rigorous numerics of higher differentials of vector fields as well as those of their eigenpairs will be necessary to validate requirements argued in Section \ref{section-procedure}.
They will be beyond our main aims and hence we leave such an advanced arguments in future works.
\end{rem}

\begin{table}[ht]
\centering
\begin{tabular}{cc}
\hline 
 & At $p_0$ \\
\hline\\[-2mm]
$M_s$ for cone condition & $200$ \\[1mm]
$\ell_s$ for extended cones & $0.0002$ \\[1mm]
Eigenvalue $\lambda_1$ & $\in 0.5298751_{0279054661}^{1790448638}$  \\[1mm]
Eigenvalue $\lambda_2$ & $\in -4.5302751_{166655018}^{040295286}$ \\[1mm]
Eigenvector $V_1$ & $\in \begin{pmatrix} 0.1739333_{5699235576}^{6098705264} \\ 0.9848589_{6784199634}^{7808890144} \end{pmatrix}$ \\[2mm]
Eigenvector $V_2$ & $\in \begin{pmatrix} 0.83381870_{005810610}^{990889451} \\ -0.552219_{50484416682}^{49703803452} \end{pmatrix}$ \\[2mm]
$\tan \theta$ in (\ref{theta-bound}) with $V_2$ &  $\in -0.6622776_{6876142802}^{5157533058}$ \\[2mm]
\hline 
\end{tabular}%
\label{table_canard}
\caption{Detailed data for Computer Assisted Result \ref{car_canard}}
\end{table}

\begin{figure}[htbp]
  \begin{center}
    \includegraphics[width=7cm]{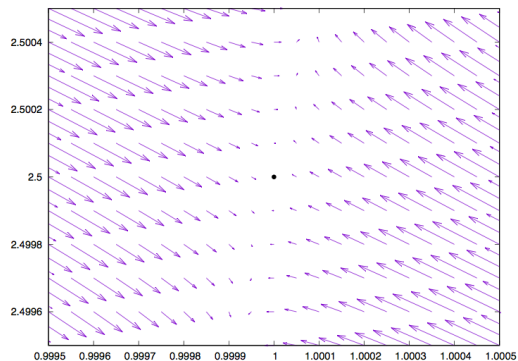}
    \caption{Vector field of desingularized vector field (\ref{desingularize-FS}) near $p_0$ with $\mu = 0.2$}
    \label{fig_canard}
\end{center}
The point $p_0$ is a saddle equilibrium for (\ref{desingularize-FS}); namely an folded saddle for (\ref{reduced-FS}).
Length of each arrow represents the strength of vector field (\ref{desingularize-FS}) at each point.
The corresponding vector field (\ref{reduced-FS}) becomes drastically different from that of (\ref{desingularize-FS}) shown herein.
Firstly, $p_0$ is not an equilibrium of (\ref{reduced-FS}). 
Secondly, amplitude of the $b$-component of vector field is arbitrarily large near $b=1$ (that is why we cannot draw the vector field of (\ref{reduced-FS}) in the similar manner), which implies that a trajectory can run across $p_0$ with extremely fast speed.
Thirdly, for $b > 1$, all arrows are reversed both in $(b,c)$-direction.
\end{figure}

\subsubsection{Technical Details}

In practical computations, we set the following parameters, which are specific for usage of CAPD library \cite{CAPD, ZLoh}.
\begin{itemize}
\item Order of Taylor expansion in Lohner's method : $12$
\item Grid size $dt$ for integration of ODEs : $0.001$
\end{itemize}
As for integration of ODEs, we divide the initial sets ($N_1$ and $N_2$ in Computer Assisted Result \ref{car_canard}) into $20$ small pieces in $b$-direction in each validation.
In 
Table 7, execution times for our validated computations in the present environment are listed.

\begin{table}[ht]
\centering
\begin{tabular}{c|c}
\hline 
Integration & Execution Time \\
\hline\\[-2mm]
From $N_1$, covering relation & 0.237 sec. \\[1mm]
From $N_1$, time integration & 1 min. 16.413 sec.  \\[1mm]
From $N_2$, covering relation & 0.324 sec. \\[1mm]
From $N_2$, time integration & 2 min. 0.395 sec.  \\[1mm]
\hline 
\end{tabular}%
\label{table_canard_time}
\caption{Execution times for validating results in Computer Assisted Result \ref{car_canard}}
\end{table}

\section*{Conclusion}
In this paper, we have proposed a rigorous computation procedure for trajectories of dynamical systems possessing finite-time singularity.
The basic idea consists of detection of points $p_0$ inducing degeneracy ({\em degeneracy-inducing points}), desingularization of vector fields so that it is regular including $p_0$, numerical validation of trajectories on stable manifolds of (hyperbolic) equilibria corresponding to $p_0$ and estimation of arrival (or extinction) time at $p_0$.
The present process is the same as validation of blow-up solutions \cite{TMSTMO, MT2017} except  detection of $p_0$.
We have newly introduced a validation procedure of {\em Lyapunov functions on stable manifolds}, which is a generalized version of \cite{MHY2016}.
This Lyapunov function enables us to validate the maximal existence time of trajectories asymptotic to equilibria {\em even of saddle type}, including the case discussed in \cite{TMSTMO, MT2017}.
The evaluation of arrival time provides a universal aspect and common approach to finite-time singularities with rigorous numerics, including blow-up phenomena (e.g., \cite{TMSTMO, MT2017}), finite traveling waves, compactons, extinction in degenerate diffusion system and canard points.
\par
As an applicability of our present method, we have demonstrated rigorous numerics of several finite traveling waves for degenerate diffusion equation as well as composite weak solutions.
In particular, a compacton traveling wave can induce, up to translation symmetry, infinitely many (finite) traveling wave solutions.
Moreover, we have proved that the family of traveling waves for degenerate diffusion systems can admit chaotic structure if the desingularized system admits more than one homoclinic solutions (namely, compacton traveling waves) with an identical speed.
As a consequence, degenerate differential equations can have very rich solution structures from the viewpoint of dynamical systems, and our present method reveals such structure with standard rigorous computation methodology.
\par
We have also shown an example of singular canard validation for (\ref{fast-slow-param})$_{\epsilon = 0}$ through folded singularities.
It turns out that canard-induced solutions can be also categorized as a kind of finite-time singularity similar to finite-time extinction and blow-ups from the viewpoint of rigorous numerics, since all these solutions can be validated by a methodology with common mechanism (desingularization, global solution for desingularized system and estimate of maximal existence time).
\par
\bigskip
There are plenty of preceding mathematical studies that solutions with finite-time singularities trigger very rich solution structure of systems, although their mathematical or numerical validations remain nontrivial depending on systems (e.g., \cite{DGKKOW2012, K1992, MS2001}). 
Our present methodology will be thus a fundamental to studying solution structures of systems with \lq\lq singular" natures with computer assistance from both quantitative and qualitative viewpoints.

\section*{Acknowledgements}
The author was partially supported by Program for Promoting the reform of national universities (Kyushu University), Ministry of Education, Culture, Sports, Science and Technology (MEXT), Japan, World Premier International Research Center Initiative (WPI), MEXT, Japan, and JSPS Grant-in-Aid for Young Scientists (B) (No. JP17K14235).

\appendix
%
%

\section{Construction of isolating blocks around equilibria via rigorous numerics}
\label{section-block}

This appendix collects a basic procedure of constructing isolating blocks with rigorous numerics so that readers who are not familiar with treatments of isolating blocks can easily access.
Detailed arguments are shown in \cite{ZM2001, Mat2}.

\subsection{A basic form}
\label{section-block-basic}
There is a preceding work for the systematic construction of isolating blocks around equilibria \cite{ZM2001}. 
One will see that such procedures are very suitable for analyzing dynamics around equilibria including degeneracy-inducing points.

\bigskip
Let $K \subset \mathbb{R}^k$ be a compact, connected and simply connected set. 
Consider first the differential equation of the following abstract form: 
\begin{equation}
\label{abstract-layer}
x' \equiv \frac{dx}{dt} = f(x, \mu),\quad f:\mathbb{R}^n\times \mathbb{R}^k\to \mathbb{R}^n.
\end{equation}
For simplicity, assume that $f$ is $C^\infty$. 
Let $\varphi_\mu$ the flow of (\ref{abstract-layer}) with fixed $\mu\in \mathbb{R}^k$.
Our purpose here is to construct an isolating block which contains an equilibrium of (\ref{abstract-layer}).

\bigskip
Let $x_0$ be a numerical equilibrium of (\ref{abstract-layer}) at $\mu_0 \in K$ and rewrite (\ref{abstract-layer}) as a series around $(x_0,\mu_0)$:
\begin{equation}
\label{Taylor}
x' = f_x(x_0,\mu_0)(x-x_0) + \hat f(x,\lambda),
\end{equation}
where $f_x(x_0,\mu_0)$ is the Fr\'{e}chet differential of $f$ with respect to $x$-variable at $(x_0,\mu_0)$. $\hat f(x,\lambda)$ denotes the higher order term of $f$ with $O(|x-x_0|^2 + |\mu-\mu_0|)$. This term may in general contain an additional term arising from the numerical error $f(x_0,\mu_0)\approx 0$.

Here assume that the $n\times n$-matrix $f_x(x_0,\mu_0)$ is nonsingular. Diagonalizing $f_x(x_0,\mu_0)$, which is generically possible, (\ref{Taylor}) is further rewritten by the following perturbed diagonal system around $(x_0,\mu_0)$:
\begin{equation}
\label{ode-y-coord}
y_j' = \lambda_j y_j + \tilde f_j(y,\mu),\quad j=1,\cdots, n.
\end{equation}
Here $\lambda_j\in \mathbb{C}$, $y = (y_1,\cdots, y_n)$ and $\tilde f (y,\mu) = (\tilde f_1(y,\mu), \cdots, \tilde f_n(y,\mu))^T$ are defined by $x = Py + x_0$ and $\tilde f(x,\mu) = P(\hat f(y,\mu))$, where $P=(P_{ij})_{i,j=1,\cdots, n}$ is a nonsingular matrix diagonalizing $f_x(x_0,\mu_0)$ and $\ast^T$ is the transpose.
Let $N\subset \mathbb{R}^n$ be a compact set containing $x_0$.
Assume that each $\tilde f_j(y,\mu)$ has a bound $[\delta_j^-, \delta_j^+]$ in $N\times K$, namely,
\begin{equation*}
\label{error-bounds}
\left\{ \tilde f_j(y,\lambda) \mid x = Py+x_0\in N, \lambda \in K\right\}\subsetneq [\delta_j^-, \delta_j^+].
\end{equation*}
Then $y_j'$ must satisfy 
\begin{equation*}
\lambda_j \left(y_j + \frac{\delta_j^-}{\lambda_j}\right) < y_j' < \lambda_j \left(y_j + \frac{\delta_j^+}{\lambda_j}\right),\quad \forall y \text{ with }x=Py+x_0\in N,\ \forall \lambda \in K.
\end{equation*}
For simplicity we assume that each $\lambda_j$ is real. 
We then obtain the candidate of an isolating block $B$ in $y$-coordinate given by the following:
\begin{align}
\label{block-exit}
B:= \prod_{j=1}^n B_j,\quad B_j = [y_j^-, y_j^+] &:= \left[-\frac{\delta_j^+}{\lambda_j}, -\frac{\delta_j^-}{\lambda_j} \right]\quad \text{ if }\lambda_j > 0,\\
\label{block-entrance}
B_j = [y_j^-, y_j^+] &:= \left[-\frac{\delta_j^-}{\lambda_j}, -\frac{\delta_j^+}{\lambda_j} \right]\quad \text{ if }\lambda_j < 0.
\end{align}

In the case that $\lambda_j$ is complex-valued for some $i$, $f_x(x_0,\lambda_0)$ contains the complex conjugate of $\lambda_j$ as the other eigenvalue. 
Without the loss of generality, we may assume $\lambda_j = \alpha_j + \sqrt{-1}\beta_j$, $\lambda_{j+1} = \bar \lambda_j = \alpha_j - \sqrt{-1}\beta_j$, $\beta_j \not = 0$. 
To be simplified, we further assume that $\lambda_j$ and $\lambda_{j+1}$ are the only complex pair of eigenvalues of $f_x(x_0,\lambda_0)$. 
The general case can be handled in the same manner.
The dynamics for $y_j$ and $z_{j+1}$ is formally written by
\begin{align*}
y_j' &= \lambda_j y_j + \tilde f_j(y,\mu),\\
y_{j+1}' &= \lambda_{j+1} y_{j+1} + \tilde f_{j+1}(y,\mu).
\end{align*}
Now we would like to consider real dynamical systems. To do this we transform the above form into 
\begin{align*}
w_j' &= \alpha_j w_j + \beta_j w_{j+1} + \bar f_j(w,\mu),\\
w_{j+1}' &= -\beta_j w_j + \alpha_j w_{j+1} + \bar f_{j+1}(w,\mu)
\end{align*}
via $Q = \begin{pmatrix} 1 & 1 \\ \sqrt{-1} & -\sqrt{-1} \end{pmatrix}$, $(w_j,w_{j+1})^T=Q(y_j,y_{j+1})^T$ and $(\bar f_j,\bar f_{j+1})^T=Q(\tilde f_j,\tilde f_{j+1})^T$, where $w=(w_1,\cdots, w_n)$ is the new coordinate satisfying $w_i= y_i$ for $i\not = j,j+1$. 
Let $r_j(w,\mu):= \sqrt{\bar f_j(w,\mu)^2 + \bar f_{j+1}(w,\mu)^2}$ and assume that $r_j(w,\mu)$ is bounded by a positive number $\bar r_j$ uniformly on $N\times K$. 
Our aim here is to construct a candidate of isolating block and hence we assume that
{\em the scalar product of the vector field and the coordinate vector
\begin{equation*}
(w_j,w_{j+1}) \cdot (\alpha_j w_j + \beta_j w_{j+1} + \bar f_j(w,\mu), -\beta_j w_j + \alpha_j w_{j+1} + \bar f_{j+1}(w,\mu))
\end{equation*}
has the identical sign and the above function never attain $0$ on $\left\{(w_j,w_{j+1}) \mid \sqrt{w_j^2 + w_{j+1}^2} \leq b_j \right\}$ ($b_j > 0$)}.
With this assumption in mind, we set the candidate of isolating block in $i$-th and $(i+1)$-th coordinate
\begin{equation*}
B_{j,j+1}:= \left\{(w_j,w_{j+1}) \mid \sqrt{w_j^2 + w_{j+1}^2} \leq \frac{\bar r_j}{|\alpha_j|} \right\}.
\end{equation*}
Its boundary becomes exit if $\alpha_j > 0$ and entrance if $\alpha_j < 0$. Finally, replace $B_j\times B_{j+1}$ in the definition of $B$ ((\ref{block-exit}) and (\ref{block-entrance})) by $B_{j,j+1}$.
\par
\bigskip
A series of estimates for error terms involves $N$ and it only makes sense if it is self-consistent, namely, $\{p_0\}+PB\subset N$.
If it is the case, then $B$ is desiring isolating block for (\ref{abstract-layer}). Indeed, if $\lambda_j > 0$, then
\begin{equation*}
y_j'\mid_{y_j = y_j^-} < 0\quad \text{ and }\quad y_j'\mid_{y_j = y_j^+} > 0
\end{equation*}
hold. Namely, the set $\{y\in B \mid y_j = y_j^{\pm}\}$ is contained in the exit. Similarly if $\lambda_j < 0$ then 
\begin{equation*}
y_j'\mid_{y_j = y_j^-} > 0\quad \text{ and }\quad y_j'\mid_{y_j = y_j^+} < 0
\end{equation*}
hold. Namely, the set $\{y\in B \mid y_j = y_j^{\pm}\}$ is contained in the entrance. Obviously $\partial B$ is the union of the closed exit and the entrance, which shows that $B$ is an isolating block for $\varphi_\mu$ for all $\mu\in K$.
Equivalently, the set $B\times K$ is an isolating block for the parameterized flow $\Phi : \mathbb{R}\times \mathbb{R}^n\times \mathbb{R}^k \to \mathbb{R}^n\times \mathbb{R}^k$ given by
$\Phi(t,x,\mu) = (\varphi_\mu(t,x), \mu)$.
\par
Once such an isolating block $B$ is constructed, one obtains an equilibrium in $B$. 
\begin{prop}[cf. \cite{ZM2001}]\rm
\label{prop-existence-fixpt}
Let $B$ be an isolating block constructed as above. Then $B$ contains an equilibrium of (\ref{abstract-layer}) for all $\mu \in K$. 
\end{prop}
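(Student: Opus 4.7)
The plan is to recast the equilibrium condition as a fixed-point problem and apply the Brouwer fixed point theorem on the block $B$, which, by construction, is homeomorphic to a ball and is invariant under the relevant contraction-type map.

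First I would work in the $y$-coordinate in which the system reads $y_j' = \lambda_j y_j + \tilde f_j(y,\mu)$. A point $y\in B$ is an equilibrium of (\ref{abstract-layer}) at parameter $\mu$ if and only if it is a fixed point of the map
\begin{equation*}
T_\mu : B \to \mathbb{R}^n,\qquad (T_\mu(y))_j := -\frac{\tilde f_j(y,\mu)}{\lambda_j}
\end{equation*}
in the real-eigenvalue coordinates, and analogously with $(T_\mu(w))_{j,j+1} := -A_j^{-1}(\bar f_j(w,\mu), \bar f_{j+1}(w,\mu))^T$ in the complex-pair coordinates, where $A_j = \bigl( \begin{smallmatrix} \alpha_j & \beta_j \\ -\beta_j & \alpha_j \end{smallmatrix}\bigr)$. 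Continuity of $T_\mu$ follows from that of $\tilde f$, and the self-consistency assumption $\{x_0\} + PB \subset N$ guarantees that the estimates on $\tilde f_j$ and $r_j$ derived on $N\times K$ apply at every point of $B$.

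Next I would verify that $T_\mu(B) \subset B$ for each $\mu \in K$. For a real eigenvalue $\lambda_j$, the uniform bound $\tilde f_j(y,\mu)\in [\delta_j^-,\delta_j^+]$ on $N\times K$ combined with the definitions (\ref{block-exit}) and (\ref{block-entrance}) yields $(T_\mu(y))_j \in [y_j^-, y_j^+] = B_j$ regardless of the sign of $\lambda_j$, since both endpoints of $B_j$ were designed precisely as $-\delta_j^\pm/\lambda_j$. For a complex conjugate pair, $A_j^{-1}$ has operator norm $1/\sqrt{\alpha_j^2+\beta_j^2} \le 1/|\alpha_j|$, so
\begin{equation*}
\|(T_\mu(w))_{j,j+1}\| \le \frac{r_j(w,\mu)}{|\alpha_j|} \le \frac{\bar r_j}{|\alpha_j|},
\end{equation*}
which is exactly the defining radius of the disk $B_{j,j+1}$. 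Taking the product over all coordinates shows $T_\mu(B)\subset B$.

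Since $B$ is a finite product of closed intervals and closed $2$-disks, it is homeomorphic to the closed ball $\overline{{\bf B}_n}$, and $T_\mu$ is continuous. Brouwer's fixed point theorem then provides $y_\mu^\ast \in B$ with $T_\mu(y_\mu^\ast)=y_\mu^\ast$, which by construction is an equilibrium of (\ref{abstract-layer}) at parameter $\mu$. The argument goes through uniformly in $\mu\in K$ because the bounds $[\delta_j^-,\delta_j^+]$ and $\bar r_j$ were taken uniform on $N\times K$.

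The main obstacle, to the extent there is one, is the complex-pair case: in the real coordinates $(w_j,w_{j+1})$ one does not get a componentwise interval trap but only a norm estimate, so the argument has to be reorganized to bound $\|A_j^{-1}\|$ and exploit the disk geometry rather than a rectangle. Once this is done, no further issue arises, and the existence part of the assertion in \cite{ZM2001} is recovered. I would not need to invoke the Conley index or isolating-block properties (exit/entrance structure) for existence itself; those are what will later give the additional information that the equilibrium is the maximal invariant set in $B$.
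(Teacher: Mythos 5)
Your proof is correct, and it is a genuinely more explicit argument than what the paper offers, which simply cites ``general theory of the Conley index or the theory of mapping degree'' without detail. The paper's intended route uses the isolating-block data (the exit/entrance decomposition of $\partial B$) to compute the Conley index of $\Inv(B)$ as a pointed sphere $\Sigma^k S^0$, where $k$ is the number of eigenvalues with positive real part, and then invokes either McCord's theorem (nonvanishing reduced Euler characteristic of the index forces an equilibrium inside) or equivalently the fact that the Brouwer degree of $f(\cdot,\mu)$ on $B$ is $(-1)^k \neq 0$. Your argument discards the exit-set structure entirely and instead observes that the equilibrium equation $\Lambda y + \tilde f(y,\mu)=0$ is a fixed-point equation for the continuous map $T_\mu(y) = -\Lambda^{-1}\tilde f(y,\mu)$ (with the block-wise $2\times 2$ inverse in the complex-pair coordinates), and that the interval/disk product $B$ was engineered precisely so that $T_\mu(B)\subset B$; Brouwer then finishes. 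The interval trap in the real case and the operator-norm bound $\|A_j^{-1}\| = (\alpha_j^2+\beta_j^2)^{-1/2} \le |\alpha_j|^{-1}$ in the complex case are both checked correctly, and your observation that the disk geometry (rather than a rectangle) is what makes the complex case close up is exactly right. What the two routes buy: the degree/Conley approach is softer and would survive a perturbation that destroyed the explicit box structure but preserved the isolating-block and exit-set topology, while your Brouwer argument is self-contained, elementary, and matches the hands-on spirit of the construction in Appendix~\ref{section-block}. Both are legitimate proofs of the proposition.
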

This proposition is the consequence of general theory of the {\em Conley index} or the theory of {\em mapping degree}. 
Note that the construction of isolating blocks stated in Proposition \ref{prop-existence-fixpt} around points which are {\em not necessarily equilibria} implies the existence of {\em rigorous} equilibria inside blocks.

%
%
\subsection{The predictor-corrector approach}
\label{section-block-pred-corr}

Here we provide another approach for validating isolating blocks.
In previous subsection, isolating blocks are constructed centered at $\{(\bar x, \mu)\mid \mu\in K\}$, where $K\subset \mathbb{R}^k$ is a (small) compact neighborhood of $\bar \mu \in K$. 
All transformations concerning eigenpairs are done at a point $(\bar x, \bar \mu)$.
On the other hand, we can reselect the center of the candidate of blocks so that blocks can be chosen smaller.
As continuations of equilibria with respect to parameters, {\em the predictor-corrector approach} is one of effective approaches.
We now revisit the construction of fast-saddle type blocks with the predictor-corrector approach.

Let $(\bar x, \bar \mu)$ be a (numerical) equilibrium for (\ref{abstract-layer}), i.e., $f(\bar x, \bar \mu)\approx 0$, such that $f_x(\bar x, \bar \mu)$ is invertible.
Let $K$ be a compact neighborhood of $\bar \mu$ as in the previous case.
The central idea is to choose the center as follows instead of $(\bar x, \bar \mu)$:
\begin{equation}
\label{new-center}
\left(\bar x + \frac{dx}{d\mu}(\bar \mu)(\mu-\bar \mu),\mu\right)\equiv \left(\bar x - f_x(\bar x, \bar \mu)^{-1}f_\mu(\bar x, \bar \mu)(\mu-\bar \mu),\mu\right),
\end{equation}
where $x = x(\mu)$ is the parametrization of $x$ with respect to $\mu$ such that $\bar x = x(\bar \mu)$ and that $f(x(\mu),\mu) = 0$, which is actually realized in a small neighborhood of $\bar \mu$ in $\mathbb{R}^k$ since $f_x(\bar x, \bar \mu)$ is invertible. 
Obviously, the identification in (\ref{new-center}) makes sense thanks to the Implicit Function Theorem.

Around the new center, we define the new affine transformation $T : (y,\nu)\mapsto (x,\mu)$ as
\begin{equation}
(x,\mu) = T(y,\nu) := \left(Py + \bar x - f_x(\bar x, \bar \mu)^{-1}f_\mu(\bar x, \bar \mu)\nu, \nu + \bar \mu\right).
\end{equation}
where $P$ is a nonsingular matrix diagonalizing $f_x(\bar x, \bar \nu)$.
Over the new $(y,\nu)$-coordinate, the fast system $x' = f(x,\mu)$ is transformed into the following:
\begin{align*}
y' &= P^{-1}\left(x' + \overline{f_x}^{-1}\overline{f_\mu}\nu' \right)\\
 	&= P^{-1}f(x,\mu) \\
	&= P^{-1}\left( \overline{f_x}(Py - \overline{f_x}^{-1}\overline{f_\mu} \nu) + \hat f(y,\nu)\right)\\
	&= \Lambda y + P^{-1}\left(-\overline{f_\mu}\nu + \hat f(y,\nu) \right)\\
	&\equiv  \Lambda y + F(y,\nu),
\end{align*}
where $\overline{f_x} = f_x(\bar x, \bar \mu)$ and $\overline{f_\mu}=f_\mu(\bar x, \bar \mu)$,  and $\Lambda = \diag(\lambda_1,\cdots, \lambda_n)$.
The function $\hat f(y, \nu)$ denotes the higher order term of $f$ with $O(|y|^2 + |\nu|)$.
Dividing $y$ into $(a,b)$ corresponding to eigenvalues with positive real parts and negative real parts, respectively, as in (\ref{ode-y-coord}), we can construct a candidate set of isolating blocks as in (\ref{block-exit}) and (\ref{block-entrance}).
\par
Note that the higher order term $\hat f(y,\nu)$ contains the linear term of $w$ as $\overline{f_\mu}\nu$ with small errors in a sufficiently small neighborhood $K$ of $\bar \mu$.
This fact indicates that, in principle, size of blocks becomes smaller than those in Section \ref{section-block-basic}.

\bibliographystyle{plain}
\bibliography{extinction}

\end{document}